\newcommand{\Z}{\ensuremath{\mathbb{Z}}\xspace}
\newcommand{\Q}{\ensuremath{\mathbb{Q}}\xspace}
\newcommand{\R}{\ensuremath{\mathbb{R}}\xspace}
\newcommand{\C}{\ensuremath{\mathbb{C}}\xspace}
\newcommand{\A}{\ensuremath{\mathbb{A}}\xspace}
\newcommand{\F}{\ensuremath{\mathbb{F}}\xspace}
\newcommand{\g}{\ensuremath{\mathfrak{g}}\xspace}
\newcommand{\m}{\ensuremath{\mathfrak{m}}\xspace}
\newcommand{\q}{\ensuremath{\mathfrak{q}}\xspace}
\newcommand{\OO}{\ensuremath{\mathcal{O}}\xspace}
\newcommand{\comment}[1]{}
\DeclareMathOperator{\Jac}{Jac}
\DeclareMathOperator{\Gal}{Gal}
\DeclareMathOperator{\Hom}{Hom}
\DeclareMathOperator{\Sym}{Sym}
\DeclareMathOperator{\Spec}{Spec}
\newcommand{\rhobar}{\ensuremath{\overline{\rho}}\xspace}
\newcommand{\T}{\ensuremath{\mathbb{T}}\xspace}
\newcommand{\GL}{\ensuremath{\mathrm{GL}}\xspace}
\newcommand{\FF}{\ensuremath{\mathscr{F}}\xspace}
\newcommand{\FFk}{\ensuremath{\mathscr{F}_{k}}\xspace}
\newcommand{\rFFk}{\ensuremath{r_*r^*\mathscr{F}_{k}}\xspace}
\newcommand{\G}{\ensuremath{\mathbb{G}}\xspace}
\newcommand{\cX}{\ensuremath{\mathscr{X}}\xspace}
\newcommand{\cXs}{\ensuremath{\mathscr{X}_{\overline{s}}}\xspace}
\newcommand{\cXe}{\ensuremath{\mathscr{X}_{\overline{\eta}}}\xspace}
\newcommand{\ql}{\ensuremath{U_p/\overline{F^{\times} \cap U_pU^p}}\xspace}
\newcommand{\qlV}{\ensuremath{V_p/\overline{F^{\times} \cap V_pV^p}}\xspace}
\newcommand{\coker}{\ensuremath{\mathrm{coker}}\xspace}
\newcommand{\pf}{\ensuremath{\mathfrak{p}}\xspace}
\newcommand{\ep}{\ensuremath{E}}
\newcommand{\pp}{\ensuremath{v}\xspace}
\newcommand{\qa}{\ensuremath{{\mathfrak{q}_1}}\xspace}
\newcommand{\up}{\ensuremath{U^p(\q)}\xspace}
\newcommand{\qb}{\ensuremath{{\mathfrak{q}_2}}\xspace}
\newcommand{\Mi}{\ensuremath{\mathbb{M}_\q}\xspace}
\newcommand{\red}{\ensuremath{\mathrm{red}}\xspace}
\newtheorem{theorem}[subsection]{Theorem}
\newtheorem{proposition}[subsection]{Proposition}
\newtheorem{corollary}[subsection]{Corollary}
\newtheorem{lemma}[subsection]{Lemma}
\newtheorem{definition}[subsection]{Definition}
\newtheorem*{remark}{Remark}
\mathchardef\mhyphen="2D
\title[Completed cohomology of Shimura curves]{Completed cohomology of Shimura curves and a $p$-adic Jacquet-Langlands correspondence}
\author{James Newton}
\address{DPMMS, University of Cambridge, Centre for Mathematical Sciences, Wilberforce Road, Cambridge CB3 0WA, UK}
\email{jjmn2@cam.ac.uk}
\begin{document}
\begin{abstract}
We study indefinite quaternion algebras over totally real fields $F$, and give an example of a cohomological construction of $p$-adic Jacquet-Langlands functoriality using completed cohomology. We also study the (tame) levels of $p$-adic automorphic forms on these quaternion algebras and give an analogue of Mazur's `level lowering' principle. 
\end{abstract}
\maketitle
\section{Introduction}
Our primary goal in this paper is to give some examples of $p$-adic interpolation of Jacquet-Langlands functoriality by studying the completed cohomology \cite{Emint} of Shimura curves. The first example of a $p$-adic interpolation of Jacquet-Langlands functoriality appears in work of Chenevier \cite{MR2111512}, who produces a rigid analytic map between the eigencurve for a definite quaternion algebra over $\Q$ (as constructed in \cite{Bu1}) and the Coleman-Mazur eigencurve for $\GL_2/\Q$ \cite{CM}.

In this paper we will be concerned with the Jacquet-Langlands transfer between automorphic representations of the multiplicative group of quaternion algebras over a totally real field $F$, which are split at one infinite place (so the attached Shimura varieties are one-dimensional). These quaternion algebras will always be split at places dividing $p$. For the purposes of this introduction we will suppose that $F=\Q$. Let $D$ be a quaternion algebra over $\Q$, split at the infinite place and with discriminant $N$, and let $D'$ be a quaternion algebra over $\Q$, split at the infinite place and with discriminant $Nql$ for $q,l$ distinct primes (coprime to $Np$). In this situation there is a rather geometric description of the Jacquet-Langlands transfer from automorphic representations of $(D'\otimes_\Q \A_\Q)^\times$ to automorphic representations of $(D\otimes_\Q \A_\Q)^\times$, arising from the study of the reduction modulo $q$ or $l$ of integral models of Shimura curves attached to $D$ and $D'$ (where we consider Shimura curves attached to $D$ with Iwahori level at $q$ and $l$). For the case $N=1$ and $F=\Q$ this is one of the key results in \cite{Ribet100}, and it was extended to general totally real fields by Rajaei \cite{Raj}. Using the techniques from these papers, together with Emerton's completed cohomology \cite{Emint}, we can describe a $p$-adic Jacquet-Langlands correspondence in a geometric way --- the result obtained is stronger than just the existence of a map between eigenvarieties, which is all that can be deduced from interpolatory techniques as in \cite{MR2111512}.

Our techniques also allow us to prove a $p$-adic analogue of Mazur's principle (as extended to the case of totally real fields by Jarvis \cite{JarMazPrin}), generalising Theorem 6.2.4 of \cite{Emlg}. This should have applications to questions of local-global compatibility at $l \ne p$ in the $p$-adic setting. Having proved a level lowering result for completed cohomology, we can deduce a level lowering result for eigenvarieties. For $F=\Q$ these level lowering results have already been obtained by Paulin \cite{NonComp}, using the results of \cite{Emlg} (in an indirect way, rather than by applying Emerton's construction of the eigencurve).

A related approach to overconvergent Jacquet-Langlands functoriality is pursued in \cite{HIS}, using Stevens's cohomological construction of the eigencurve from overconvergent modular symbols.

We briefly indicate the main results proven in what follows:
\subsection{Level lowering}
We prove an analogue of Mazur's principle for $p$-adic automorphic forms (see Theorems \ref{mazprin} and \ref{mazprinjacquet} for precise statements). Roughly speaking, we show that if a two-dimensional $p$-adic representation $\rho$ of $\Gal(\overline{F}/F)$ ($F$ a totally real field) is unramified at a finite place $\q \nmid p$ and occurs in the completed cohomology of a system of Shimura curves with tame level equal to $U_0(\q)$ at the $\q$-factor, then the system of Hecke eigenvalues attached to $\rho$ occurs in the completed cohomology of Shimura curves with tame level equal to  $\GL_2(\OO_\q)$ at the $\q$-factor.

\subsection{A $p$-adic Jacquet-Langlands correspondence} We show the existence of a short exact sequence of admissible unitary Banach representations of $\GL_2(F\otimes_\Q \Q_p)$ (Theorem \ref{RRES} --- in fact the exact sequence is defined integrally) which encodes a $p$-adic Jacquet-Langlands correspondence from a quaternion algebra $D'/F$, split at one infinite place, with discriminant $\mathfrak{N}\qa\qb$, to a quaternion algebra $D/F$, split at the same infinite place and with discriminant $\mathfrak{N}$. Applying Emerton's eigenvariety construction to this exact sequence gives a map between eigenvarieties (Theorem \ref{ocjl}), i.e. an overconvergent Jacquet-Langlands correspondence in the same spirit as the main theorem of \cite{MR2111512}. As a corollary of these results we also obtain level raising results (Corollary \ref{levelraising}).

\section{Preliminaries}
\subsection{Vanishing cycles}\label{sec:van}
In this section we will follow the exposition of vanishing cycles given in the first chapter of \cite{Raj} - we give some details to fix ideas and notation. Let $\OO$ be a characteristic zero Henselian discrete valuation ring, with residue field $k$ of characteristic $l$, and fraction field $L$. Let $\cX\rightarrow S=\Spec(\OO)$ be a proper, generically smooth curve with reduced special fibre, and define $\Sigma$ to be the set of singular points on the geometric special fibre $\cXs:= \cX\times_S \Spec(\overline{k})$. We also write $\cXe$ for the geometric generic fibre $\cX\times_S \Spec(\overline{L})$. We assume that a neighbourhood of each point $x \in \Sigma$ is \'etale locally isomorphic over $S$ to $\Spec(\OO[t_1,t_2]/(t_1 t_2 - a_x))$ with $a_x$ a non-zero element of $\OO$ with valuation $e_x=v(a_x)>0$.
Define morphisms $i$ and $j$ to be the natural maps:
$$i: \cXs \rightarrow \cX, j: \cXe \rightarrow \cX.$$
For $\FF$ a constructible torsion sheaf on $\cX$, with torsion prime to $l$, the Grothendieck (or Leray) spectral sequence for the composition of the two functors $\Gamma(\cX,-)$ and $j_*$ gives an identification $$R\Gamma(\cXe,j^*\FF)=R\Gamma(\cX,R(j)_*j^*\FF).$$ Applying the proper base change theorem we also have $$R\Gamma(\cX,R(j)_*j^*\FF)\simeq R\Gamma(\cXs,i^*R(j)_*j^*\FF).$$ The natural adjunction $id\implies R(j)_*j^*$ gives a morphism $i^*\FF \rightarrow i^*R(j)_*j^*\FF$ of complexes on $\cXs$. We denote by $R\Phi(\FF)$ the mapping cone of this morphism (the complex of \emph{vanishing cycles}), and denote by $R\Psi(\FF)$ the complex $i^*R(j)_*j^*\FF$ (the complex of \emph{nearby cycles}). There is a distinguished triangle $$i^*\FF \rightarrow R\Psi(\FF) \rightarrow R\Phi(\FF) \rightarrow,$$ whence a long exact sequence of cohomology $$\cdots\rightarrow H^i(\cXs,i^*\FF)\rightarrow H^i(\cXs,R\Psi(\FF))\rightarrow H^i(\cXs,R\Phi(\FF))\rightarrow\cdots$$
As on page 36 of \cite{Raj} there is a \emph{specialisation exact sequence}, with `$(1)$' denoting a Tate twist of the $\Gal(\overline{L}/L)$ action:
\small\begin{equation}\label{speces}
\minCDarrowwidth20pt\begin{CD}0 @>>> H^1(\cXs,i^*\FF)(1) @>>> H^1(\cXe,j^*\FF)(1) @>\beta>> \bigoplus_{x\in\Sigma} R^1\Phi(\FF)_x(1) \\
 @>\gamma>> H^2(\cXs,i^*\FF)(1) @>{sp(1)}>> H^2(\cXe,j^*\FF)(1) @>>> 0.\end{CD}
\end{equation}\normalsize
We can extend the above results to lisse \'etale $\Z_p$-sheaves ($p \ne l$), by taking inverse limits of the exact sequences for the sheaves $\FF/p^i\FF$. Since our groups satisfy the Mittag-Leffler condition we preserve exactness. In particular we obtain the specialisation exact sequence (\ref{speces}) for $\FF$. From now on $\FF$ will denote a $\Z_p$-sheaf, and we define 

$$X(\FF) := \ker(\gamma) = \mathrm{im}(\beta) \subset \bigoplus_{x\in\Sigma} R^1\Phi(\FF)_x(1),$$ so there is a short exact sequence 
$$\minCDarrowwidth20pt\begin{CD}0 @>>> H^1(\cXs,i^*\FF)(1) @>>> H^1(\cXe,j^*\FF)(1) @>>> X(\FF) @>>> 0.\end{CD}$$

As in section 1.3 of \cite{Raj} there is also a cospecialisation sequence
\small\begin{equation}\label{cospeces}
\minCDarrowwidth20pt\begin{CD}0 @>>> H^0(\widetilde{\cXs},R\Psi(\FF)) @>>> H^0(\widetilde{\cXs},\FF) @>{\gamma'}>> \bigoplus_{x\in\Sigma} H^1_x(\cXs,R\Psi(\FF)) \\
 @>{\beta'}>> H^1(\cXs,R\Psi(\FF)) @>>> H^1(\cXs,\FF) @>>> 0,\end{CD}
\end{equation}\normalsize
where we write $\widetilde{\cXs}$ for the normalisation of $\cXs$, and, abusing notation, we retain the same notations $\FF$ (respectively  $R\Psi(\FF)$) for the pullbacks of $\FF$ to $\cXs$ and $\widetilde{\cXs}$ (resp. for the pullback of $R\Psi(\FF)$) to $\widetilde{\cXs}$).  The normalisation map is denoted $r:\widetilde{\cXs}\rightarrow \cXs$.
We can now define 
$$\check{X}(\FF) := \mathrm{im}(\beta'),$$ and apply Corollary 1 of \cite{Raj} to get the following:
\begin{proposition}We have the following diagram made up of two short exact sequences:
$$\xymatrix{& 0\ar[d]\\
&\check{X}(\FF)\ar[d]\\
0\ar[r]&H^1(\cXs, \FF) \ar[r]\ar[d]&H^1(\cXe, \FF)\ar[r]&X(\FF)(-1)\ar[r]&0\\
&H^1(\cXs, r_*r^*\FF)\ar[d]\\
&0}$$
\end{proposition}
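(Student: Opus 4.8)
The plan is to treat the two short exact sequences separately, the horizontal row being essentially free. Twisting the short exact sequence
$$0 \to H^1(\cX\otimes\overline{k},i^*\FF)(1) \to H^1(\cX\otimes\overline{L},j^*\FF)(1) \to X(\FF) \to 0$$
constructed just above by $(-1)$ produces the row $0 \to H^1(\cX\otimes\overline{k},\FF) \to H^1(\cX\otimes\overline{L},\FF) \to X(\FF)(-1) \to 0$, so the real content is the exactness of the vertical column $0 \to \check{X}(\FF) \to H^1(\cX\otimes\overline{k},\FF) \to H^1(\cX\otimes\overline{k},r_*r^*\FF) \to 0$ together with the fact that it runs through the left-hand term of the row; the diagram asserts nothing beyond this, since row and column meet only in that single term.

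For the column I would first reduce to torsion coefficients: when $\FF$ is finite and constructible (torsion prime to $l$) every group in sight is finite, so the relevant inverse systems are Mittag--Leffler and the passage to $\Z_p$-coefficients through the sheaves $\FF/p^n\FF$ preserves exactness, exactly as in the construction of the specialisation sequence above. For $\FF$ torsion the column is the content of Corollary~1 of \cite{Raj}, whose proof combines two inputs. The first is the cospecialisation sequence (\ref{cospeces}): since $\check{X}(\FF)=\mathrm{im}(\beta')$, exactness of (\ref{cospeces}) exhibits $\check{X}(\FF)$ simultaneously as a quotient of $\bigoplus_{x\in\Sigma}H^1_x(\cX\otimes\overline{k},R\Psi(\FF))$ and as a submodule of $H^1(\cX\otimes\overline{k},R\Psi(\FF))$. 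The second is the normalisation exact sequence
$$0 \to \FF \to r_*r^*\FF \to \mathcal{S} \to 0$$
on $\cX\otimes\overline{k}$, where $\mathcal{S}$ is the skyscraper sheaf recording, at each node $x\in\Sigma$, the difference of the two branches; since $\mathcal{S}$ is acyclic, its long exact cohomology sequence already has the shape
$$0\to\coker(H^0(\widetilde{\cX\otimes\overline{k}},\FF)\to\bigoplus_{x\in\Sigma}\FF_{\bar x})\to H^1(\cX\otimes\overline{k},\FF)\to H^1(\cX\otimes\overline{k},r_*r^*\FF)\to 0,$$
and the task is to identify the leftmost term, the ``combinatorial'' or toric part of $H^1$ of the special fibre, with $\check{X}(\FF)$.

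That identification is the one real obstacle, and it is exactly the local computation at the nodes carried out in Section~1 of \cite{Raj}. Using the hypothesis that a neighbourhood of each $x\in\Sigma$ is \'etale-locally $\Spec(\OO[t_1,t_2]/(t_1t_2-a_x))$, one computes $R\Psi(\FF)$, the local cohomology $H^1_x(\cX\otimes\overline{k},R\Psi(\FF))$, and the maps $\gamma',\beta'$ on this standard model, and checks that the submodule $\mathrm{im}(\beta')$ is canonically the toric part appearing above --- with the monodromy and twisting bookkeeping depending on the integers $e_x=v(a_x)$. Granting this the column is exact and, together with the row, gives the asserted diagram, the remaining points ($\Gal(\overline{k}/k)$-equivariance, commutativity, and passage to the limit) being routine. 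In the write-up I would therefore just invoke \cite[Cor.~1]{Raj} for torsion sheaves and add the Mittag--Leffler remark, the genuine work being the node computation internal to that reference.
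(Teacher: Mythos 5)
Your proposal is correct and matches the paper's route: the row is the specialisation sequence already established (twisted by $(-1)$), and the column is exactly Corollary~1 of \cite{Raj}, which the paper simply cites without further argument. Your unpacking of that corollary (cospecialisation sequence plus the normalisation/skyscraper sequence, with the node-local computation identifying the toric part with $\mathrm{im}(\beta')$, and the Mittag--Leffler passage to $\Z_p$-coefficients) is accurate but is detail the paper delegates entirely to the reference.
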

Note that the map $\check{X}(\FF)\rightarrow H^1(\cXs,\FF)$ is the one induced by the fact that $$\check{X}(\FF) \subset H^1(\cXs,R\Psi(\FF))$$ is in the image of the injective map $$H^1(\cXs,\FF)\rightarrow H^1(\cXs,R\Psi(\FF)).$$ Finally, as in (1.13) of \cite{Raj} there is an injective map $$\lambda: X(\FF)\rightarrow \check{X}(\FF)$$ coming from the monodromy pairing.

\section{Shimura curves and their bad reduction}
\subsection{Notation}
Let $F/\Q$ be a totally real number field of degree $d$. We denote the infinite places of $F$ by $\tau_1,...,\tau_d$. We will be comparing the arithmetic of two quaternion algebras over $F$. The first quaternion algebra will be denoted by $D$, and we assume that it is split at $\tau_1$ and at all places in $F$ dividing $p$, and non-split at $\tau_i$ for $i \ne 1$ together with a finite set $S$ of finite places. If $d=1$ then we furthermore assume that $D$ is non-split at some finite place (so we avoid working with non-compact modular curves). Fix two finite places $\qa$ and $\qb$ of $F$, which do not divide $p$ and where $D$ is split, and denote by $D'$ a quaternion algebra over $F$ which is split at $\tau_1$ and non-split at $\tau_i$ for $i \ne 1$ together with each finite place in $S \cup \{\qa,\qb\}$. We fix maximal orders $\OO_D$ and $\OO_{D'}$ of $D$ and $D'$ respectively, and an isomorphism $D\otimes_F \A_F^{(\qa \qb)} \cong D'\otimes_F \A_F^{(\qa \qb)}$ compatible with the choice of maximal orders in $D$ and $D'$.

We write $\q$ to denote one of  the places $\qa,\qb$, and denote the completion $F_\q$ of $F$ by $L$, with ring of integers $\OO_\q$ and residue field $k_\q$. Denote $F\cap \OO_\q$ by $\OO_{(\q)}$. We have the absolute Galois group $G_\q=\Gal(\overline{L}/L)$ with its inertia subgroup $I_\q$.

Let $G$ and $G'$ denote the reductive groups over $\Q$ arising from the unit groups of $D$ and $D'$ respectively. Note that $G$ and $G'$ are both forms of $\mathrm{Res}_{F/\Q}(\GL_2/F)$. For $U$ a compact open subgroup of $G(\A_f)$ and $V$ a compact open subgroup of $G'(\A_f)$ we have complex (disconnected) Shimura curves

$$M(U)(\C) = G(\Q)\backslash G(\A_f)\times (\C-\R)/U$$
$$M'(U)(\C) = G'(\Q)\backslash G'(\A_f)\times (\C-\R)/V,$$ where $G(\Q)$ and $G'(\Q)$ act on $\C-\R$ via the $\tau_1$ factor of $G(\R)$ and $G'(\R)$ respectively.

Both these curves have canonical models over $F$, which we denote by $M(U)$ and $M'(U)$. We follow the conventions of \cite{carmauv} to define this canonical model --- see \cite{BDJ} for a discussion of two different conventions used when defining canonical models of Shimura curves.

\subsection{Integral models and coefficient sheaves}\label{subsec:intco}
Let $U \subset G(\A_f)$ be a compact open subgroup (we assume $U$ is a product of local factors over the places of $F$), with $U_\q = U_0(\q)$ (the matrices in $\GL_2(\OO_\q)$ which have upper triangular image in $\GL_2(k_\q)$), and $U$ unramified at all the finite places $v$ where $D$ is non-split (i.e. $U_v=\OO_{D,v}^\times$). Let $\Sigma(U)$ denote the set of finite places where $U$ is ramified. In Theorem 8.9 of \cite{JarMazPrin}, Jarvis constructs an integral model for $M(U)$ over $\OO_{\q}$ which we will denote by $\Mi(U)$. In fact this integral model only exists when $U$ is sufficiently small (a criterion for this is given in \cite[Lemma 12.1]{JarMazPrin}), but this will not cause us any difficulties (see the remark at the end of this subsection). Section 10 of \cite{JarMazPrin} shows that $\Mi(U)$ satisfies the conditions imposed on $\cX$ in section \ref{sec:van} above.

We also have integral models for places dividing the discriminant of our quaternion algebra. Let $V \subset G'(\A_f)$ be a compact open subgroup with $V_\q = \OO_{D',\q}^\times$. We let $\Mi'(V)$ denote the integral model for $M'(V)$ over $\OO_\q$ coming from Theorem 5.3 of \cite{VarII} (this is denoted $\mathfrak{C}$ in \cite{Raj}). This arises from the $\q$-adic uniformisation of $M'(V)$ by a formal $\OO_\q$-scheme, and gives integral models for varying $V$ which are compatible under the natural projections, and again satisfy the conditions necessary to apply section \ref{sec:van}.

We now want to construct sheaves on $\Mi(U)$ and $\Mi'(V)$ corresponding to the possible (cohomological) weights of Hilbert modular forms. Let $k = (k_1,\ldots k_d)$ be a $d$-tuple of integers (indexed by the embeddings $\tau_i$ of $F$ into $\R$ if the reader prefers), all $\ge 2$ and of the same parity. The $d$-tuple of integers $v$ is then characterised by having non-negative entries, at least one of which is zero, with $k+2v=(w,...,w)$ a parallel vector. We will now define $p$-adic lisse \'etale sheaves $\FFk$ on $\Mi(U)$, as in \cite{Car,milne,Sai}. Denote by $Z$ the centre of the algebraic group $G$. Note that $Z \cong\mathrm{Res}_{F/\Q}(\G_{m,F})$. We let $Z_s$ denote the maximal subtorus of $Z$  that is split over $\R$ but which has no subtorus split over $\Q$. It is straightforward to see that $Z_s$ is the subtorus given by the kernel of the norm map $$N_{F/\Q}:\mathrm{Res}_{F/\Q}(\G_{m,F})\rightarrow \G_{m,\Q}.$$
In the notation of \cite[Ch. III]{milne} the algebraic group $G^c$ is the quotient of $G$ by $Z_s$. Take $F'\subset \C$ a number field of finite degree, Galois over $\Q$ and containing $F$ such that $F'$ splits both $D$ and $D'$. Since $F'/\Q$ is normal, $F'$ contains all the Galois conjugates of $F$ and we can identify the embeddings $\{\tau_i:F\rightarrow \C\}$ with the embeddings $\{\tau_i:F\rightarrow F'\}$, via the inclusion $F' \subset \C$. Since $D\otimes_{F,\tau_i} F' \cong M_2(F')$ for each $i$, we obtain representations $\xi_i$ of $D^\times=G(\Q)$ on the space $W_i=F'^2$. Let $\nu:G(\Q)\rightarrow F^\times$ denote the reduced norm. Then $$\xi^{(k)} = \otimes_{i=1}^d (\tau_i\circ \nu)^{v_i}\otimes\Sym^{k_i-2}(\xi_i)$$ is a representation of $G(\Q)$ acting on $W_k=\otimes_{i=1}^d \Sym^{k_i-2}(W_i)$. In fact $\xi^{(k)}$ arises from an algebraic representation of $G$ on the $\Q$-vector space underlying $W_k$ (or rather the associated $\Q$-vector space scheme). For $z \in Z(\Q)=F^\times$ the action of $\xi^{(k)}(z)$ is multiplication by $N_{F/\Q}(z)^{w-2}$, so $\xi^{(k)}$ factors through $G^c$.
The representation $\xi^{(k)}$ gives rise to a representation of $G(\Q_p)$ on $W_k\otimes_\Q \Q_p = W_k \otimes_{F'} (F' \otimes_\Q \Q_p)$, so taking a prime $\pf$ in $F'$ above $p$ and projecting to the relevant factor of $F' \otimes_\Q \Q_p$ we obtain an $F'_\pf$-linear representation $W_{k,\pf}$ of $G(\Q_p)$.

Now we may define a $\Q_p$-sheaf on $M(U)(\C)$:

$$\FF_{k,\Q_p,\C} = G(\Q)\backslash(G(\A_f) \times (\C - \R) \times W_{k,\pf}) / U,$$ where $G(\Q)$ acts on $W_{k,\pf}$ by the representation described above (via its embedding into $G(\Q_p)$), and $U$ acts only on the $G(\A_f)$ factor, by right multiplication. Note that this definition only works because $\xi^{(k)}$ factors through $G^c$.

Choose an $\OO_{F'_\pf}$ lattice $W_{k,\pf}^0$ in $W_{k,\pf}$. For convenience, we will fix a choice of lattice for each $k$. After projection to $G(\Q_p)$, $U$ acts on $W_{k,\pf}$, and if $U$ is sufficiently small it stabilises $W_{k,\pf}^0$. Pick a normal open subgroup $U_1 \subset U$ acting trivially on $W_{k,\pf}^0/p^n$. We now define a $\Z/p^n\Z$-sheaf on $M(U)(\C)$ by
$$\FF_{k,\Z/p^n\Z,\C} = (M(U_1)(\C)\times (W_{k,\pf}^0/p^n)) / (U/U_1).$$
Now on the integral model $\Mi(U)$ we can define an \'etale sheaf of $\Z/p^n\Z$-modules by
$$\FF_{k,\Z/p^n\Z} = (\Mi(U_1)\times (W_{k,\pf}^0/p^n)) / (U/U_1).$$
Taking inverse limits of these $\Z/p^n\Z$-sheaves we get a $\Z_p$- (in fact $\OO_{F'_\pf}$-) lisse \'{e}tale sheaf on $\Mi(U)$ which will be denoted by $\FFk$.

The same construction applies to $\Mi'(V)$, and we also denote the resulting \'{e}tale sheaves on $\Mi'(V)$ by $\FFk$. Note that we chose $F'$ so that it split both $D$ and $D'$ - this allows $F'_\pf$ to serve as a field of coefficients for both $\Mi'(U)$ and $\Mi'(V)$.

From now on we work with a coefficient field $E$. This can be any finite extension of $F'_\pf$. Denote the ring of integers in $E$ by $\OO_\pf$, and fix a uniformiser $\varpi$. The sheaf $\FFk$ on the curves $\Mi(U)$ and $\Mi'(V)$ will now denote the sheaves constructed above with coefficients extended to $\OO_\pf$. 

\begin{remark}Note that in the above discussion we assumed that the level subgroup $U$ was `small enough'. In the applications below we will be taking direct limits of cohomology spaces as the level at $p$, $U_p$, varies over all compact open subgroups of $G(\Q_p)$, so we can always work with a sufficiently small level subgroup by passing far enough through the direct limit.\end{remark}

\subsection{Hecke algebras}\label{sec:hecke}
We will briefly recall the definition of Hecke operators, as in \cite{DT,Raj}. Let $U$ and $U'$ be two (sufficiently small) compact open subgroups of $G(\A_f)$ as in the previous section, and suppose we have $g \in G(\A_f)$ satisfying $g^{-1}U'g \subset U$. Then there is a (finite) \'etale map $\rho_g:M(U')\rightarrow M(U)$, corresponding to right multiplication by $g$ on $G(\A_f)$. Now for $v$ a finite place of $F$, with $D$ unramified at $v$, let $\eta_v$ denote the element of $G(\A_f)$ which is equal to the identity at places away from $v$ and equal to $\begin{pmatrix}1 & 0 \\0 & \varpi_v \end{pmatrix}$ at the place $v$, where $\varpi_v$ is a fixed uniformiser of $\OO_{F,v}$. We also use $\varpi_v$ to denote the element of $G(\A_f)$ which is equal to the identity at places away from $v$ and equal to $\begin{pmatrix}\varpi_v & 0 \\0 & \varpi_v \end{pmatrix}$ at the place $v$. Then the Hecke operator $T_v$ is defined by means of the correspondence on $M(U)$ coming from the maps $$\rho_1,\rho_{\eta_v}:M(U\cap\eta_v U \eta_v^{-1})\rightarrow M(U),$$ whilst $S_v$ is defined by the correspondence $$\rho_1,\rho_{\varpi_v}:M(U)\rightarrow M(U).$$ Note that we are using the notation $T_v$ even when the group $U$ is ramified at $v$. We can make the same definitions for the curve $M'(V)$. 
\begin{definition} The Hecke algebra $\T_k(U)$ is defined to be the $\OO_\pf$-algebra of endomorphisms of $H^1(M(U)_{\overline{F}},\FFk)$ generated by Hecke operators $T_v, S_v$ for finite places $v \nmid p\qa\qb$ such that $U$ and $D$ are both unramified at $v$. The Hecke algebra $\T'_k(V)$ is defined to be the $\OO_\pf$-algebra of endomorphisms of $H^1(M'(V)_{\overline{F}},\FFk)$ generated by Hecke operators $T_v, S_v$ for finite places $v \nmid p$ such that $V$ and $D'$ are both unramified at $v$.
\end{definition}
The Hecke algebras $\T_k(U)$ and $\T'_k(V)$ are semilocal, and their maximal ideals correspond to semisimple mod $p$ Galois representations arising from Hecke eigenforms occurring in $H^1(M(U)_{\overline{F}},\FFk)$ and $H^1(M'(V)_{\overline{F}},\FFk)$ respectively. Given a maximal ideal $\m$ of $\T_k(U)$ in the support of the maximal $\OO_\pf$-torsion free quotient of $H^1(M(U)_{\overline{F}},\FFk)$ we denote the Hecke algebra's localisation at $\m$ by $\T_k(U)_{\rhobar}$, where $\rhobar: \Gal(\overline{F}/F) \rightarrow \GL_2(\overline{\F}_p)$ is the (semisimple) mod $p$ Galois representation attached to $\m$, and more generally for any $\T_k(U)$-module $M$, denote its localisation at $\m$ by $M_{\rhobar}$. If $\rhobar$ is irreducible we say that $\m$ is non-Eisenstein. We apply the same notational convention to $\T'_k(V)$ and modules for this Hecke algebra.

The action of the Hecke operators on the $\OO_\pf$-torsion $H^1(M(U)_{\overline{F}},\FFk)^{\mathrm{tors}}$ is Eisenstein (see \cite[Lemma 4]{DT}), so if $\rhobar$ as above is irreducible then $H^1(M(U)_{\overline{F}},\FFk)_{\rhobar}$ is an $\OO_\pf$-torsion free direct summand of $H^1(M(U)_{\overline{F}},\FFk)$, with a faithful action of $\T_k(U)_{\rhobar}$. The same considerations apply to the Hecke algebra $\T_k'(V)$.

Note that if $k=(2,...,2)$ then $H^1(M(U)_{\overline{F}},\FFk)=H^1(M(U)_{\overline{F}},\OO_\pf)$ is already $\OO_\pf$-torsion free.

\subsection{Towards Mazur's principle}\label{mazprinclass}
In this subsection and the next we summarise the results of \cite{JarMazPrin} and \cite{Raj}. First we fix a (sufficiently small) compact open subgroup $U \subset G(\A_f)$ such that $U_\q = \GL_2(\OO_\q)$. For brevity we denote the compact open subgroup $U\cap U_0(\q)$ by $U(\q)$. 

We apply the theory of Section \ref{sec:van} to the curve $\Mi(U(\q))$ and the $\Z_p$-sheaf given by $\FFk$, for some weight vector $k$, to obtain $\OO_\pf$-modules $X_\q(U(\q),\FFk)$ and $\check{X}_\q(U(\q),\FFk)$. We have short exact sequences
\small\begin{equation}\label{ccses}\minCDarrowwidth10pt\begin{CD}0 @>>> H^1(\Mi(U(\q))_{\overline{s}}, \FFk) @>>> H^1(\Mi(U(\q))_{\overline{\eta}}, \FFk) @>>> X_\q (U(\q),\FFk)(-1) @>>> 0,\end{CD}\end{equation}
\begin{equation}\label{cccoses}\minCDarrowwidth10pt\begin{CD}0@>>>\check{X}_\q(U(\q),\FFk)@>>> H^1(\Mi(U(\q))_{\overline{s}}, \FFk)@>>> H^1(\Mi(U(\q))_{\overline{s}}, r_*r^*\FFk)@>>> 0.\end{CD}\end{equation}\normalsize
The above short exact sequences are equivariant with respect to the $G_\q$ action. Moreover, as discussed in \cite[\S 15]{JarMazPrin}, the correspondences of section \ref{sec:hecke} extend to correspondences on the integral models for our Shimura curves, so for $v \nmid p\q$ we may define actions of the Hecke operators $T_v, S_v$ on all the groups appearing in the sequences (\ref{ccses}), (\ref{cccoses}). The sequences are then equivariant with respect to the actions of these Hecke operators. For an irreducible $\rhobar$ associated with a maximal ideal $\m$ of $\T_k(U(\q))$ we can localise at $\m$ to get short exact sequences of $\T_k(U(\q))_{\rhobar}$-modules. The module $X_\q (U(\q),\FFk)$ is always $\OO_\pf$-torsion free (it is a defined as a submodule of a visibly $\OO_\pf$-torsion free module), and the torsion part of $H^1(\Mi(U(\q))_{\overline{s}}, r_*r^*\FFk)$ is Eisenstein (by the same argument as for $H^1(M(U)_{\overline{F}},\FFk)$), so localising at $\m$ actually gives short exact sequences of $\OO_\pf$-torsion free $\T_k(U(\q))_{\rhobar}$-modules.

The sequence (\ref{cccoses}) underlies the proof of Mazur's principle (see \cite{JarMazPrin} and Theorem \ref{mazprin} below).

\subsection{Ribet-Rajaei exact sequence}\label{sec:RR}
We also apply the theory of Section \ref{sec:van} to the curve $\Mi'(V)$ and the sheaf $\FFk$, where $V \subset G'(\A_f)$ is a (sufficiently small) compact open subgroup which is unramified at $\q$. We obtain $\OO_\pf$-modules $Y_\q(V,\FFk)$ and $\check{Y}_\q(V,\FFk),$ where $Y$ and $\check{Y}$ correspond to $X$ and $\check{X}$ respectively. As in equation 3.2 of \cite{Raj} these modules lie in ($G_\q$ and Hecke equivariant) short exact sequences
\small$$\minCDarrowwidth10pt\begin{CD}0@>>>H^1(\Mi'(V)_{\overline{s}}, \FFk) @>>>H^1(\Mi'(V)_{\overline{\eta}}, \FFk)@>>>Y_\q(V,\FFk)(-1)@>>>0,\end{CD}$$
$$\minCDarrowwidth10pt\begin{CD}0@>>>\check{Y}_\q(V,\FFk)@>>>H^1(\Mi'(V)_{\overline{s}}, \FFk)@>>>H^1(\Mi'(V)_{\overline{s}}, r_*r^*\FFk)@>>>0.\end{CD}$$\normalsize

As in the previous section, these sequences may be localised at non-Eisenstein ideals of $\T'_k(V)$ to give short exact sequences of $\OO_\pf$-torsion free modules. It is remarked in section 3 of \cite{Raj} that in fact $H^1(\Mi'(V)_{\overline{s}}, r_*r^*\FFk)=0$, since the irreducible components of the special fibre of $\Mi'(V)$ are rational curves, so we have $$\check{Y}_\q(V,\FFk)\cong H^1(\Mi'(V)_{\overline{s}}, \FFk).$$

We can now relate our constructions for the two quaternion algebras $D$ and $D'$. This requires us to now distinguish between the two places $\qa$ and $\qb$. We make some further assumptions on the levels $U$ and $V$. First we assume that $U_\qa = \OO_{D,\qa} \cong \GL_2(\OO_\qa)$ and $U_\qb = \OO_{D,\qb} \cong \GL_2(\OO_\qb)$, and similarly that $V$ is also maximal at both $\qa$ and $\qb$. We furthermore assume that the factors of $U$ away from $\qa$ and $\qb$, $U^{\qa \qb}$, match with $V^{\qa\qb}$ under the fixed isomorphism $D\otimes_F \A_{F,f}^{(\qa \qb)} \cong D'\otimes_F \A_{F,f}^{(\qa \qb)}$. We will use the notations $U(\qa), U(\qb), U(\qa\qb)$ to denote $U\cap U_0(\qa), U\cap U_0(\qb), U\cap U_0(\qa)\cap U_0(\qb)$ respectively.

We fix a Galois representation $$\rhobar: \Gal(\overline{F}/F) \rightarrow \GL_2(\overline{\F}_p),$$ satisfying two assumptions. Firstly, we assume that $\rhobar$ arises from a Hecke eigenform in $H^1(M'(V)_{\overline{F}},\FFk)$. Secondly, we assume that $\rhobar$ is irreducible. Equivalently, the corresponding maximal ideal $\m'$ in $\T'_k(V)$ is non-Eisenstein. 

There is a surjection $$\T_k(U(\qa\qb)) \rightarrow \T'_k(V)$$ arising from the classical Jacquet--Langlands correspondence, identifying $\T'_k(V)$ as the $\qa\qb$-new quotient of $\T_k(U(\qa\qb))$. Hence there is a (non-Eisenstein) maximal ideal of $\T_k(U(\qa\qb))$ associated to $\rhobar$ (in fact this can be established directly from the results contained on pg. $55$ of \cite{Raj}, but for expository purposes it is easier to rely on the existence of the Jacquet--Langlands correspondence). We can therefore regard all $\T'_k(V)_{\rhobar}$-modules as $\T_k(U(\qa\qb))_{\rhobar}$-modules. Now \cite[Theorem 3]{Raj} implies the following: 

\begin{theorem}\label{RRESclass}There are $\T_k(U(\qa\qb))_{\rhobar}$-equivariant short exact sequences
$$\minCDarrowwidth15pt\begin{CD}0@>>> Y_\qa(V,\FFk)_{\rhobar} @>>>X_\qb(U(\qa\qb),\FFk)_{\rhobar}@>{i^\dagger}>>X_\qb(U(\qb),\FFk)^{\oplus 2}_{\rhobar}@>>>0,\end{CD}$$
$$\minCDarrowwidth15pt\begin{CD}0@>>>\check{X}_\qb(U(\qb),\FFk)^{\oplus 2}_{\rhobar}@>{i}>>\check{X}_\qb(U(\qa\qb),\FFk)_{\rhobar} @>>>\check{Y}_\qa(V,\FFk)_{\rhobar}@>>>0.\end{CD}$$\end{theorem}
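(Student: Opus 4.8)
The plan is to derive this from Rajaei's Theorem 3 in \cite{Raj} (strictly speaking its statement with $\OO_\pf$-coefficient sheaves $\FFk$ rather than just $\Q_p$-coefficients, which is the mild upgrade we need). Recall that Rajaei's level-raising comparison relates the vanishing-cycle modules at $\qa$ for $D'$ with those for $D$ at the auxiliary place $\qb$: the special fibre of $\Mi'(V)$ is obtained, via $\qa$-adic uniformisation, from two copies of (a quotient of) the special fibre of $\Mi(U(\qb))$ glued along the supersingular-type locus, and the character-group/monodromy modules match up accordingly. Concretely, the first sequence expresses $X_{\qb}(U(\qa\qb),\FFk)$ as an extension of the ``$\qa$-old'' part $X_{\qb}(U(\qb),\FFk)^2$ (the two degeneracy maps from level $U(\qb)$ to level $U(\qa\qb)$ at the place $\qa$) by the ``$\qa$-new'' part, which is precisely $Y_{\qa}(V,\FFk)$; the map $i^\dagger$ is the sum of the two degeneracy (pushforward) maps at $\qa$, and the second sequence is its dual under the monodromy pairing, with $i$ the sum of the two pullback maps. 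So the first step is simply to quote \cite[Theorem 3]{Raj} in this form, taking care that the argument there (which works integrally with the sheaves of section 2 of \cite{Raj}, these being our $\FFk$) goes through verbatim for $\OO_\pf$-coefficients; the Mittag-Leffler remark from Section \ref{sec:van} handles the passage to $\Z_p$- (hence $\OO_\pf$-)sheaves.

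Next I would localise at $\rhobar$. Since $\rhobar$ is irreducible, the associated maximal ideal $\m'$ of $\T'_k(V)$ is non-Eisenstein, and as explained just before the statement, via the surjection $\T_k(U(\qa\qb))\to\T'_k(V)$ it corresponds to a non-Eisenstein maximal ideal $\m$ of $\T_k(U(\qa\qb))$; all the modules in both sequences are $\T_k(U(\qa\qb))$-modules in a compatible way, and by the discussion in Section \ref{sec:hecke} (Eisenstein-ness of the torsion, \cite[Lemma 4]{DT}) localisation at $\m$ picks out $\OO_\pf$-direct summands and is exact. Hence localising Rajaei's two short exact sequences at $\rhobar$ yields exactly the two displayed sequences, with all maps ($i^\dagger$, $i$, and the Hecke action) inherited. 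The $\T_k(U(\qa\qb))_{\rhobar}$-equivariance is automatic because the degeneracy maps at $\qa$ commute with all the Hecke operators away from $\qa$ that generate the Hecke algebra (the operators at $\qa$ are not in $\T_k(U(\qa\qb))$ by the definition of that algebra, which excludes $v\mid \qa\qb$).

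The one genuine point to check — and the step I expect to be the main obstacle — is the identification of the ``new at $\qa$'' subquotient of $X_{\qb}(U(\qa\qb),\FFk)$ with $Y_{\qa}(V,\FFk)$, i.e. the compatibility between the two applications of Section \ref{sec:van}: one to $\Mi(U(\qa\qb))$ at the place $\qb$, the other to $\Mi'(V)$ at the place $\qa$. This is where the geometry of $\qa$-adic uniformisation of $\Mi'(V)$ enters: the dual graph of the special fibre of $\Mi'(V)$ at $\qa$ is built from that of $\Mi(U(\qb))$ at $\qb$, and one must check the coefficient sheaves $\FFk$ correspond under the fixed isomorphism $D\otimes\A_F^{(\qa\qb)}\cong D'\otimes\A_F^{(\qa\qb)}$ and the chosen matching of $U^{\qa\qb}$ with $V^{\qa\qb}$. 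For $\Q_p$- or constant coefficients this is precisely the content of \cite{Raj} (building on \cite{Ribet100}), so the real work is to confirm that nothing about the argument is special to those coefficients — the sheaf $\FFk$ is lisse and its restriction to the relevant (super-singular) locus is understood via the uniformisation, so the character-group computation is insensitive to the coefficients, and the monodromy pairing duality relating the two sequences is formal once the first sequence is established. I would therefore present the proof as: (i) recall Rajaei's theorem with $\FFk$-coefficients, noting the integral/sheaf-theoretic version is already in \cite{Raj}; (ii) localise at $\rhobar$ using non-Eisenstein-ness; (iii) remark that Hecke-equivariance is immediate.
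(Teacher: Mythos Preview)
Your proposal is correct and matches the paper's approach exactly: the paper gives no independent argument here at all, simply stating that \cite[Theorem 3]{Raj} implies the result (after the preceding discussion explaining why $\T'_k(V)_{\rhobar}$-modules can be regarded as $\T_k(U(\qa\qb))_{\rhobar}$-modules). Your steps (i)--(iii) are precisely this citation plus the exact localisation at the non-Eisenstein maximal ideal. One minor inaccuracy in your heuristic description (not affecting the proof): the comparison between the $D$ and $D'$ vanishing-cycle modules is not made directly via a dual-graph matching between $\Mi'(V)$ at $\qa$ and $\Mi(U(\qb))$ at $\qb$, but rather passes through a third, totally definite, quaternion algebra $\overline{D}$ ramified at $S\cup\{\qb\}$ --- the paper mentions this immediately after the statement, and it is the heart of Rajaei's argument.
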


In the theorem, the maps labelled $i$ and $i^\dagger$ are the natural `level raising' map and its adjoint. To be more precise $i=(i_1,i_{\eta_\q})$, where $i_g$ is the map coming from the composition of the natural map $$H^1(M(U)_{\overline{F}},\FFk) \rightarrow H^1(M(U(\q))_{\overline{F}},\rho_{g}^*\FFk)$$ with the map coming from the sheaf morphism $\rho_{g}^*\FFk \rightarrow \FFk$ as in \cite[pg. 449]{DT}. The map $i^\dagger=(i_1^\dagger,i_{\eta_\q}^\dagger)$, where $i_g^\dagger$ is the map coming from the composition of the map arising from the sheaf morphism $\FFk \rightarrow \rho_g^*\FFk$ (again see \cite[pg. 449]{DT}) with the map $$H^1(M(U(\q))_{\overline{F}},\rho_g^*\FFk) \rightarrow H^1(M(U)_{\overline{F}},\FFk)$$ arising from the trace map $\rho_{g*}\rho_g^*\FFk\rightarrow \FFk$.

We think of the short exact sequences in the above Theorem as a geometric realisation of the Jacquet--Langlands correspondence between automorphic forms for $G'$ and for $G$. The proof of the above theorem relies on relating the spaces involved to the arithmetic of a third quaternion algebra $\overline{D}$ (with associated reductive algebraic group $\overline{G}/\Q$) which is non-split at all the infinite places of $F$ and at the places in $S \cup \{\qb\}$. However, we will just focus on the groups $G'$ and $G$, and view $\overline{G}$ as playing an auxiliary role.  

\section{Completed cohomology of Shimura curves}

\subsection{Completed cohomology}\label{subsec:ccoho}
We now consider compact open subgroups $U^p$ of $G(\A_f^{(p)})$, with factor at $v$ equal to $\OO_{D_v}^\times$ at all places $v$ of $F$ where $D$ is non-split. Then we have the following definitions, as in \cite{Emint,Emlg}:
\begin{definition}
We write $H^n_D(U^p,\FFk/\varpi^s)$ for the $\OO_\pf$ module $$\varinjlim_{U_p} H^n(M(U_p U^p)_{\overline{F}},\FFk/\varpi^s),$$ which has a continuous action of $G_F$. If $\FFk = \OO_\pf$ then this module also has a smooth action of $G(\Q_p)=\prod_{\pp | p} \GL_2(F_\pp)$.

We then define a $\varpi$-adically complete $\OO_\pf$-module $$\widetilde{H}^n_D(U^p,\FFk) := \varprojlim_s H^n_D(U^p,\FFk/\varpi^s),$$ which again has a continuous action of $G_F$. If $\FFk = \OO_\pf$ this module has a continuous action of $G(\Q_p)$
\end{definition}

\begin{lemma}
The $\OO_\pf$-module $\widetilde{H}^n_D(U^p,\OO_\pf)$ is a $\varpi$-adically admissible $G(\Q_p)$ representation over $\OO_\pf$ (in the sense of \cite[Definition 2.4.7]{Emordone}).
\end{lemma}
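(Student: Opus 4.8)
The plan is to verify the two conditions in Emerton's definition of a $\varpi$-adically admissible representation (\cite[Definition 2.4.7]{Emordone}): that $\widetilde{H}^n_D(U^p,\FFk)$ is $\varpi$-adically complete and separated (which is immediate from the definition as an inverse limit $\varprojlim_s H^n(U^p,\FFk/\varpi^s)$), and that for each $s$ the quotient $\widetilde{H}^n_D(U^p,\FFk)/\varpi^s \cong H^n_D(U^p,\FFk/\varpi^s)$ is a smooth admissible $G(\Q_p)$-representation over $\OO_\pf/\varpi^s$, i.e. that for every compact open subgroup $U_p \subset G(\Q_p)$ the $U_p$-invariants form a finitely generated $\OO_\pf/\varpi^s$-module. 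First I would reduce to $s=1$ by dévissage: there is a short exact sequence of sheaves $0 \to \FFk/\varpi \to \FFk/\varpi^s \to \FFk/\varpi^{s-1} \to 0$, and admissibility is stable under extensions, so it suffices to treat $\FFk/\varpi$.

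Next I would use the commutation of the $U_p$-invariants functor with the direct limit over smaller level subgroups. Since $U_p$ acts smoothly on $H^n_D(U^p,\FFk/\varpi^s) = \varinjlim_{U_p'} H^n(M(U_p'U^p)_{\overline{F}},\FFk/\varpi^s)$, and taking invariants under the compact group $U_p$ (working over the field or finite ring $\OO_\pf/\varpi^s$, where invariants are exact and commute with filtered colimits in this smooth setting), we get $$\left(H^n_D(U^p,\FFk/\varpi^s)\right)^{U_p} \cong H^n(M(U_p U^p)_{\overline{F}},\FFk/\varpi^s)$$ provided $U_p$ is small enough that $U_pU^p$ is a permissible level (so that the sheaf $\FFk$ and the Shimura curve are defined — here the remark about passing far through the direct limit is what makes this harmless, since any $U_p$ contains a smaller such subgroup and we can compare via the Hochschild–Serre / transfer argument). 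The key point is then that $M(U_pU^p)_{\overline{F}}$ is a curve of finite type over an algebraically closed field and $\FFk/\varpi^s$ is a constructible sheaf of $\OO_\pf/\varpi^s$-modules, so $H^n(M(U_pU^p)_{\overline{F}},\FFk/\varpi^s)$ is a finitely generated $\OO_\pf/\varpi^s$-module by finiteness of étale cohomology. This gives admissibility for $\widetilde{H}^n_D(U^p,\FFk)/\varpi^s$, and combined with completeness and separatedness yields the claim.

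The main obstacle, and the step requiring the most care, is the identification of $U_p$-invariants of the completed cohomology with the cohomology at level $U_pU^p$ when $U_p$ is not itself small enough for the integral model or coefficient sheaf to be directly defined. Here one has to invoke a limit argument comparing $H^n(M(U_p'U^p)_{\overline{F}},\FFk/\varpi^s)^{U_p/U_p'}$ for a small normal $U_p' \triangleleft U_p$ with the cohomology of the quotient curve — using that the transition maps in the direct system are those induced by finite étale covers, so the relevant Hochschild–Serre spectral sequence $H^i(U_p/U_p', H^j(M(U_p'U^p)_{\overline{F}},\FFk/\varpi^s)) \Rightarrow H^{i+j}(M(U_pU^p)_{\overline{F}},\FFk/\varpi^s)$ degenerates appropriately after localizing or is not needed in the form of an isomorphism but only to bound the invariants. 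In practice, since we only need finite generation of the invariants (not an exact identification), it suffices to note $\left(H^n_D(U^p,\FFk/\varpi^s)\right)^{U_p} \subseteq \left(H^n_D(U^p,\FFk/\varpi^s)\right)^{U_p'} = H^n(M(U_p'U^p)_{\overline{F}},\FFk/\varpi^s)$, which is finitely generated over the Noetherian ring $\OO_\pf/\varpi^s$, hence so is the submodule of $U_p$-invariants. This bypasses the degeneration question entirely and is the cleanest route.
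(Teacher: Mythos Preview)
The paper's proof is a one-line citation of \cite[Theorem~2.2.11]{Emint}, noting only that one must keep track of the $\OO_\pf$-integral structure. Your proposal instead unpacks Emerton's definition and attempts to verify it directly; in effect you are sketching a proof of the cited theorem, specialised to Shimura curves. The overall strategy --- reduce to admissibility of the smooth representation $H^n_D(U^p,\FFk/\varpi^s)$ and invoke finiteness of \'etale cohomology at finite level --- is exactly the strategy underlying Emerton's result.

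There is, however, a genuine gap in your ``cleanest route''. You assert the equality
\[
\bigl(H^n_D(U^p,\FFk/\varpi^s)\bigr)^{U_p'} \;=\; H^n\bigl(M(U_p'U^p)_{\overline{F}},\FFk/\varpi^s\bigr)
\]
for $U_p'$ small enough, and claim that this ``bypasses the degeneration question entirely''. It does not: the natural map goes from the right-hand side to the $U_p'$-invariants of the direct limit, and its surjectivity is precisely the degeneration statement you were trying to avoid. Concretely, one always has
\[
\bigl(H^n_D\bigr)^{U_p'} \;=\; \varinjlim_{U_p''\lhd U_p'} \bigl(H^n(M(U_p''U^p))\bigr)^{U_p'/U_p''},
\]
and for $n=1$ the five-term Hochschild--Serre sequence shows that the cokernel of $H^1(M(U_p'U^p)) \to (H^1_D)^{U_p'}$ embeds into $\varinjlim_{U_p''} H^2(U_p'/U_p'',H^0(M(U_p''U^p)))$, which has no a priori reason to vanish. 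Since the groups $U_p'/U_p''$ have $p$-power order and the coefficients are $\OO_\pf/\varpi^s$, the usual transfer/averaging argument is unavailable. So neither the asserted equality nor, without further work, the finite generation of $(H^n_D)^{U_p'}$ follows from your final paragraph. There is also a smaller unjustified step: the $\varpi$-adic completeness of $\varprojlim_s H^n_D(U^p,\FFk/\varpi^s)$ and the identification of its reduction mod $\varpi^s$ with $H^n_D(U^p,\FFk/\varpi^s)$ are not automatic from the bare inverse-limit definition; they too are part of what \cite[Theorem~2.2.11]{Emint} establishes. Once these points are addressed --- by the inductive Hochschild--Serre analysis you correctly identify in your penultimate paragraph, starting from the directly-verifiable admissibility of $H^0_D$ --- your argument becomes essentially Emerton's proof, which is why the paper simply cites it.
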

\begin{proof}
This follows from \cite[Theorem 2.2.11]{Emint}, taking care to remember the $\OO_\pf$-integral structure.
\end{proof}

Write $\widetilde{H}^n_D(U^p,\FFk)_{\ep}$ for the admissible continuous $E$-Banach $G(\Q_p)$-representation $\widetilde{H}^n_D(U^p,\FFk)\otimes_{\OO_\pf}\ep$ --- we refer to \cite[Lemma 2.2.10]{Emint} for an explanation of the $G(\Q_p)$ action on these spaces. If $U_p \subset G(\Q_p)$ is a compact open subgroup which stabilises the lattice $W_{k,\pf}^0$ then $U_p$ acts on the module $\widetilde{H}^n_D(U^p,\FFk)$. Also, $\widetilde{H}^n_D(U^p,\FFk)_{\ep}$ is `independent of the choice of lattice $W_{k,\pf}^0$' in the sense that there are canonical isomorphisms between the spaces defined using different lattices (see \cite[Lemma 2.2.8]{Emint})

We will usually denote $\widetilde{H}^n_D(U^p,\OO_\pf)_{\ep}$ by $\widetilde{H}^n_D(U^p,E)$.
\begin{lemma}\label{hhat}
There is a canonical isomorphism $$\widetilde{H}^n_D(U^p,\OO_\pf) = \varprojlim_s H^n_D(U^p,\OO_\pf)/\varpi^sH^n_D(U^p,\OO_\pf),$$ compatible with $G_F$ and $G(\Q_p)$ actions.
\end{lemma}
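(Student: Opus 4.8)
The plan is to reduce the statement, for each fixed tame level $U^p$, to the fact that the $\OO_\pf$-adic \'etale cohomology of the Shimura curves $M(U_pU^p)$ is $\varpi$-torsion free in every degree; granting this, the comparison isomorphism holds already before passing to any limit, and the two limits then match up formally. Recall that $H^n(U^p,-)$ abbreviates $\varinjlim_{U_p}H^n(M(U_pU^p)_{\overline F},-)$, the colimit over compact open $U_p\subset G(\Q_p)$ along the restriction maps, so that $\widetilde H^n(U^p,\OO_\pf)=\varprojlim_s H^n(U^p,\OO_\pf/\varpi^s)$ by definition and the claim is exactly that the natural map $\varprojlim_s\!\big(H^n(U^p,\OO_\pf)/\varpi^s\big)\to\varprojlim_s H^n(U^p,\OO_\pf/\varpi^s)$ is an isomorphism.

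First I would fix $U_p$, put $C=M(U_pU^p)_{\overline F}$, and extract from the sheaf sequence $0\to\OO_\pf\xrightarrow{\varpi^s}\OO_\pf\to\OO_\pf/\varpi^s\to0$ the universal-coefficient exact sequence
$$0\to H^n(C,\OO_\pf)/\varpi^s\to H^n(C,\OO_\pf/\varpi^s)\to H^{n+1}(C,\OO_\pf)[\varpi^s]\to0,$$
where $H^\ast(C,\OO_\pf)=\varprojlim_t H^\ast(C,\OO_\pf/\varpi^t)$ (the ${\varprojlim}^1$ terms vanish, the mod-$\varpi^t$ groups being finite). The one substantive point is that the third term vanishes: $C$ is a smooth proper curve over $\overline F$ (our standing hypotheses on $D$ exclude the split modular-curve case), so $H^j(C,\OO_\pf)=0$ for $j\ge3$; $H^0(C,\OO_\pf)\cong\OO_\pf^{\#\pi_0(C)}$ and $H^2(C,\OO_\pf)\cong\OO_\pf^{\#\pi_0(C)}(-1)$ are finite free over $\OO_\pf$ (the latter via the trace map, or Poincar\'e duality); and $H^1(C,\OO_\pf)$ has no $\varpi$-torsion, because in the long exact cohomology sequence the reduction map $H^0(C,\OO_\pf)\to H^0(C,\OO_\pf/\varpi)$ is already surjective, forcing $H^1(C,\OO_\pf)[\varpi]=0$ and hence $H^1(C,\OO_\pf)[\varpi^s]=0$ for all $s$. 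Thus the edge map $H^n(C,\OO_\pf)/\varpi^s\xrightarrow{\sim}H^n(C,\OO_\pf/\varpi^s)$ is an isomorphism for every $n$ and $s$.

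Finally I would pass to the limits. Taking the filtered colimit over $U_p$ (exact, and commuting with $-\otimes_{\OO_\pf}\OO_\pf/\varpi^s$) yields, for each $s$, an isomorphism $H^n(U^p,\OO_\pf)/\varpi^s\xrightarrow{\sim}H^n(U^p,\OO_\pf/\varpi^s)$; since every map used — the sheaf sequence, the edge maps, and the colimit transition maps along shrinking $U_p$ — is functorial, this isomorphism is $G_F$-equivariant, acquires a $G(\Q_p)$-action after the colimit, and is compatible with the transition maps in $s$. Applying $\varprojlim_s$ then gives the asserted canonical, $G_F$- and $G(\Q_p)$-equivariant, isomorphism $\varprojlim_s\!\big(H^n(U^p,\OO_\pf)/\varpi^s\big)\xrightarrow{\sim}\widetilde H^n(U^p,\OO_\pf)$. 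I expect the only real content to be the torsion-freeness input in the middle step: it is precisely where one uses that we are working with curves, since it makes the universal-coefficient comparison an isomorphism already at finite level and so sidesteps any ${\varprojlim}^1$-issues or divisible $\varpi$-power torsion in $\widetilde H^{n+1}$; the rest is formal manipulation of direct and inverse limits.
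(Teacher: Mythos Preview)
Your argument is correct and is essentially an explicit unpacking of the reference the paper invokes: the paper's proof is the single line ``Since $M(U_pU^p)_{\overline F}$ is proper of dimension $1$, this follows from Corollary~2.2.27 of \cite{Emint}'', and that corollary of Emerton's proceeds exactly along the lines you describe (universal-coefficient sequence, torsion-freeness of $H^{n+1}$ coming from the low dimension and properness, then formal limit manipulations). So the approach is the same; you have simply written out the content of the cited result in the particular case of curves, where the torsion-freeness of $H^0$, $H^1$, $H^2$ can be checked directly as you do.
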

\begin{proof}Since $M(U_p U^p)_{\overline{F}}$ is proper of dimension $1$, this follows from Corollary 2.2.27 of \cite{Emint} \end{proof}
\begin{remark}Note that in the notation of \cite{Emint} the above lemma says that $\widetilde{H}^n_D(U^p) = \widehat{H}^n_D(U^p)$.\end{remark}
The following lemma allows us to restrict to considering completed cohomology with trivial coefficients.
\begin{lemma}\label{changeweight}There is a canonical $G(\Q_p)$ and $G_F$ equivariant isomorphism
$$\widetilde{H}^n_D(U^p,\FFk)_{\ep} \cong \widetilde{H}^n_D(U^p,\OO_\pf)\otimes_{\OO_\pf} W_{k,\pf}.$$
\end{lemma}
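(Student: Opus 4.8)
The plan is to trivialise the local system $\FFk$ at finite level and then pass to the limit over $U_p$. Recall that by construction $\FFk$ arises from the $G(\Q_p)$-representation $W^0_{k,\pf}$: choosing, for each $U_p$, a normal open subgroup $U_p' \subset U_p$ (together with a normal open $U^{p\prime} \subset U^p$, which we may ignore by the remark on shrinking the tame level) acting trivially on $W^0_{k,\pf}/\varpi^s$, one has the identification of sheaves on $M(U_p U^p)$
$$\FFk/\varpi^s \;=\; \bigl(M(U_p' U^{p\prime})_{\overline F} \times (W^0_{k,\pf}/\varpi^s)\bigr)\big/\,(U_pU^p/U_p'U^{p\prime}).$$
First I would use this, exactly as in section 2.1 of \cite{Car} (the Hochschild--Serre / covering-space argument), to produce for each $s$ and each sufficiently small $U_p$ a natural isomorphism
$$H^n(M(U_p U^p)_{\overline F}, \FFk/\varpi^s) \;\cong\; H^n(M(U_p U^p)_{\overline F}, \OO_\pf/\varpi^s) \otimes_{\OO_\pf} W^0_{k,\pf}/\varpi^s,$$
where the point is that once $U_p$ acts trivially on $W^0_{k,\pf}/\varpi^s$ the sheaf $\FFk/\varpi^s$ becomes the constant sheaf $W^0_{k,\pf}/\varpi^s$, which is a finite free $\OO_\pf/\varpi^s$-module, so cohomology commutes with the tensor factor by the universal coefficient / projection formula on a finite free module.

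Second, I would check $G_F$- and $G(\Q_p)$-equivariance of this isomorphism in the limit. The $G_F$-action is the geometric Galois action and is plainly compatible with the constant-sheaf identification. For the $G(\Q_p)$-action the subtlety is that an element $g \in G(\Q_p)$ does not fix a given $U_p$, so the isomorphism at a fixed finite level is \emph{not} $G(\Q_p)$-equivariant; rather, one must package the Hecke-correspondence maps $\rho_g^* \FFk \to \FFk$ (the sheaf morphisms recalled in section 3.3 and on pg.~449 of \cite{DT}) and observe that on the constant sheaf $W^0_{k,\pf}/\varpi^s$ the transition map induced by $g$ differs from the naive one precisely by the automorphism $\xi^{(k)}(g)$ of $W^0_{k,\pf}/\varpi^s$. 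Thus, taking $\varinjlim_{U_p}$, the smooth $G(\Q_p)$-representation $H^n_D(U^p,\FFk/\varpi^s)$ is identified with $H^n_D(U^p,\OO_\pf/\varpi^s) \otimes_{\OO_\pf} W^0_{k,\pf}/\varpi^s$, where $G(\Q_p)$ acts diagonally (its action on $H^n_D(U^p,\OO_\pf/\varpi^s)$ and via $\xi^{(k)}$ on $W^0_{k,\pf}/\varpi^s$). This is the finite-level form of the statement and is essentially Lemma 5.7 / the analogous computation in \cite{Emint}, transcribed to Shimura curves.

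Finally I would take $\varprojlim_s$. Since $W^0_{k,\pf}$ is a finite free $\OO_\pf$-module, $-\otimes_{\OO_\pf} W^0_{k,\pf}$ is exact and commutes with $\varprojlim_s$, and $W^0_{k,\pf}/\varpi^s = W^0_{k,\pf}\otimes_{\OO_\pf}\OO_\pf/\varpi^s$, so
$$\widetilde H^n(U^p,\FFk) = \varprojlim_s H^n_D(U^p,\FFk/\varpi^s) \cong \varprojlim_s\bigl(H^n_D(U^p,\OO_\pf/\varpi^s)\otimes_{\OO_\pf}W^0_{k,\pf}\bigr) \cong \Bigl(\varprojlim_s H^n_D(U^p,\OO_\pf/\varpi^s)\Bigr)\otimes_{\OO_\pf}W^0_{k,\pf} = \widetilde H^n(U^p,\OO_\pf)\otimes_{\OO_\pf}W^0_{k,\pf},$$
and this isomorphism carries the diagonal $G(\Q_p)$-action and the $G_F$-action over, as required; there are no $\varprojlim^1$ issues because each $H^n_D(U^p,\OO_\pf/\varpi^s)\otimes W^0_{k,\pf}$ surjects onto the next and the modules are the ones already known to satisfy Mittag--Leffler from section 2.1.

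\textbf{Main obstacle.} The only genuinely delicate point is the $G(\Q_p)$-equivariance in the passage to the limit, i.e.\ correctly tracking how the Hecke/translation maps interact with the trivialisation of the sheaf so that the twist by $\xi^{(k)}$ appears and the action on the right-hand side is the \emph{diagonal} one; everything else (exactness of $-\otimes W^0_{k,\pf}$, commuting tensor with $\varprojlim$, Galois compatibility) is routine given that $W^0_{k,\pf}$ is finite free over $\OO_\pf$.
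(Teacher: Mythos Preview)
Your proposal is correct and is precisely the argument underlying the reference the paper invokes: the paper's proof is the single line ``This follows from Theorem 2.2.17 of \cite{Emint},'' and what you have written is essentially a direct unpacking of that theorem in the present setting (trivialising $\FFk/\varpi^s$ once $U_p$ acts trivially, identifying the diagonal $G(\Q_p)$-action via $\xi^{(k)}$, and passing to the limit using finite freeness of $W^0_{k,\pf}$). So your approach is the same as the paper's, just with the cited black box opened up.
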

\begin{proof}This follows from Theorem 2.2.17 of \cite{Emint}.
\end{proof}

\begin{proposition}Let $\overline{F^{\times,+}}$ be the closure of the totally positive elements of $F^\times$ in $\A_F^{f,\times}$. There is an isomorphism between $\widetilde{H}^0_D(U^p,\OO_\pf)$ and the space of continuous functions $$\mathscr{C}(\overline{F^{\times,+}} \backslash \A_F^{f,\times} / \det(U^p), \OO_\pf),$$ with $G(\Q_p)$ action induced by $g$ acting on $\A_F^{f,\times}$ as multiplication by $\det(g)$.\end{proposition}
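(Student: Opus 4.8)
The plan is to unwind the definition of $\widetilde{H}^0$, reduce everything to a computation of the sets of geometric connected components of the finite-level Shimura curves, and then pass to the limit.

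By definition $\widetilde{H}^0(U^p,\OO_\pf) = \varprojlim_s \varinjlim_{U_p} H^0(M(U_pU^p)_{\overline{F}},\OO_\pf/\varpi^s)$, and in the direct limit we may restrict to $U_p$ small enough that $M(U_pU^p)$ is a smooth curve. For such a curve $H^0(M(U_pU^p)_{\overline{F}},\OO_\pf/\varpi^s)$ is canonically the module of $\OO_\pf/\varpi^s$-valued functions on the finite set $\pi_0(M(U_pU^p)_{\overline{F}})$ of geometric connected components. I would compute this set over $\C$ (its formation is insensitive to the algebraically closed overfield): from $M(U_pU^p)(\C) = G(\Q)\backslash G(\A_f)\times(\C-\R)/U_pU^p$ one gets $\pi_0 = G(\Q)\backslash\big(\pi_0(\C-\R)\times G(\A_f)/U_pU^p\big)$. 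Since $D$ is split at the infinite place $\tau_1$, the derived group $G^{\mathrm{der}} = \mathrm{SL}_1(D)$ is non-compact at $\infty$, so strong approximation holds for it, and the reduced norm $\nu$ therefore identifies this quotient with $\Gamma\backslash\A_F^{f,\times}/\det(U_pU^p)$, where $\Gamma \subseteq F^\times$ is the image under $\nu$ of the subgroup $G(\Q)^+ \subseteq D^\times$ stabilising the chosen component of $\C-\R$; the norm theorem (Hasse--Schilling) pins $\Gamma$ down explicitly and shows it is independent of $U_p$. This identification intertwines the Hecke action at $p$ with multiplication by $\det(g)$ on $\A_F^{f,\times}$.

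For the passage to the limit, I would use that the subgroups $\det(U_pU^p)$ become arbitrarily small inside the maximal compact subgroup $\widehat{\OO}_F^\times$ of $\A_F^{f,\times}$ as $U_p$ shrinks, with $\bigcap_{U_p}\det(U_pU^p) = \det(U^p)$. Since $\Gamma$ has finite index in $F^\times$ and $F^\times\backslash\A_F^{f,\times}/\widehat{\OO}_F^\times$ is the (finite) ideal class group, each double coset set $\Gamma\backslash\A_F^{f,\times}/\det(U_pU^p)$ is finite, and their inverse limit over $U_p$ is $\overline{\Gamma}\backslash\A_F^{f,\times}/\det(U^p)$ --- a profinite, hence compact, space, where $\overline{\Gamma}$ is the closure of $\Gamma$ in $\A_F^{f,\times}$. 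Compactness lets one identify $\varinjlim_{U_p}$ of the function modules with $\mathscr{C}(\overline{\Gamma}\backslash\A_F^{f,\times}/\det(U^p),\OO_\pf/\varpi^s)$, and a compatible system over $s$ of such locally constant functions is precisely one continuous $\OO_\pf$-valued function; this yields the asserted isomorphism, $G(\Q_p)$-equivariantly (with $g$ acting through $\det$).

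The step that genuinely requires care is the explicit determination of $\Gamma$ and the check that the resulting space $\overline{\Gamma}\backslash\A_F^{f,\times}/\det(U^p)$ is the one in the statement, i.e. $\overline{F^\times}\backslash\A_F^{f,\times}/\det(U^p)$, under the conventions of \cite{carmauv} for the canonical model. This means keeping careful track of which component of $\C-\R$ one normalises against and how $D^\times$ permutes the two components through its $\tau_1$-factor, together with the precise form of Hasse--Schilling for $D$, which is ramified at $\tau_2,\dots,\tau_d$ and at the finite places of $S$. The remaining ingredients --- the reduction to $\pi_0$, the use of strong approximation, and the point-set topology of the limit --- are routine.
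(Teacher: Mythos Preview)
Your plan is essentially the paper's argument, just unpacked: the paper compresses the $\pi_0$ computation into the single observation that the product of the reduced norm on $G(\A_f)$ with the sign map on $\C-\R$ gives a map $M(U)(\C)\to F^\times\backslash(\A_{F,f}^\times\times\{\pm\})/\det(U)$ with connected fibres, which is exactly your strong-approximation/fixed-component argument rephrased; it then leaves the passage to the limit implicit.

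One warning about the step you flag. If you run Hasse--Schilling--Maass you get $\nu(D^\times)=\{x\in F^\times:\tau_i(x)>0\text{ for }i\ge2\}$, and $G(\Q)^+$ is cut out by $\tau_1(\nu(\cdot))>0$, so $\Gamma=\nu(G(\Q)^+)=F^{\times,+}$, the totally positive elements. The closures $\overline{F^{\times,+}}$ and $\overline{F^\times}$ in $\A_F^{f,\times}$ coincide precisely when every sign vector in $\{\pm1\}^d$ is realised by a global unit, which can fail (e.g.\ $F=\Q(\sqrt{3})$). So do not be alarmed if your careful computation produces $\overline{F^{\times,+}}\backslash\A_F^{f,\times}/\det(U^p)$ rather than the quotient by $\overline{F^\times}$; the paper's shortcut hits the same point at the asserted equality $F^\times\backslash(\A_{F,f}^\times\times\{\pm\})/\det(U)=F^\times\backslash\A_{F,f}^\times/\det(U)$. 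For how the proposition is actually used (the Eisenstein nature of $\widetilde{H}^0$, hence its vanishing after localising at a non-Eisenstein $\m$), this discrepancy is immaterial.
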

\begin{proof}This follows from the description of the connected components of $M(U_p U^p)(\C)$ given by \cite[\S 1.2]{carmauv}.\end{proof}

\begin{proposition}The space $\widetilde{H}^2_D(U^p,\OO_\pf)$ is equal to $0$. \end{proposition}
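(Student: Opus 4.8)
The plan is to compute $\widetilde{H}^2(U^p,\OO_\pf)$ by reducing, via the exact sequence defining completed cohomology with torsion coefficients and Lemma \ref{hhat}, to the vanishing of $\varinjlim_{U_p} H^2(M(U_pU^p)_{\overline{F}},\OO_\pf/\varpi^s)$, and then exploiting the fact that the curves $M(U_pU^p)$ are \emph{disconnected} (non-compact in the sense of having many geometric components), so that $H^2$ of each component is one-dimensional but the transition maps in the direct limit become zero. Concretely, each $M(U_pU^p)_{\overline{F}}$ is a disjoint union of smooth projective curves over $\overline{F}$, and $H^2(M(U_pU^p)_{\overline{F}},\OO_\pf/\varpi^s)$ is a free $\OO_\pf/\varpi^s$-module with basis the set of geometric connected components, with Galois and Hecke action through the permutation of components; by the trace/Poincar\'e-duality pairing this $H^2$ is identified with the space of $\OO_\pf/\varpi^s$-valued functions on $\pi_0(M(U_pU^p)_{\overline{F}})$.

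The key step is then to understand the transition maps on $H^2$ as $U_p$ shrinks. For a finite \'etale map $\rho_g : M(U'_pU^p) \to M(U_pU^p)$ of degree $n$, the pullback map on $H^2$ (cup product with trivial coefficients) is, componentwise, multiplication by the degree of $\rho_g$ restricted to each component, hence by a positive power of $p$ (since $D$ and hence $G$ is split at $p$, so shrinking $U_p$ within $\GL_2(F\otimes\Q_p)$ gives covers of $p$-power degree once one is past some small level). Therefore for fixed $s$, after passing far enough through the direct limit the transition maps on $H^2(-,\OO_\pf/\varpi^s)$ become divisible by $\varpi^s$, i.e. are zero; hence $\varinjlim_{U_p} H^2(M(U_pU^p)_{\overline{F}},\OO_\pf/\varpi^s) = 0$ for every $s$, so $\widetilde H^0(U^p,\OO_\pf/\varpi^s)$-style reasoning applied in degree $2$ gives $H^2_D(U^p,\OO_\pf/\varpi^s)=0$ and then, taking $\varprojlim_s$ and using Lemma \ref{hhat}, $\widetilde{H}^2(U^p,\OO_\pf)=0$. (Alternatively one invokes the general vanishing $\widetilde H^n = 0$ for $n$ above the dimension of the relevant locally symmetric space once one accounts for the component-counting degree-$0$ part separately: for $\GL_2$-type Shimura curves the ``interesting'' range is concentrated in degrees $0$ and $1$, and $H^2$ of the adelic double coset space dies in the limit by the above degree argument.)

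I expect the main obstacle to be the bookkeeping around the disconnectedness: one must check that the direct limit is genuinely over all sufficiently small $U_p$ and that cofinally the covers $M(U'_pU^p)\to M(U_pU^p)$ have every component-degree divisible by $p$ — this uses that $\GL_2(F\otimes\Q_p)$ has no open subgroups of index prime to $p$ other than those already handled at the ``base'' level, together with the fact that the component maps factor through $\det$ and the determinant of a congruence subgroup of $\GL_2(\OO_\q)$ has $p$-power index in $\OO_\q^\times$ for $\q\mid p$ — combined with the need to separate the part of $H^2$ that might survive for bad reasons (e.g. contributions of fixed components under the relevant group action). Once that cofinality/degree statement is in hand, the vanishing modulo each $\varpi^s$ and the passage to the inverse limit are formal, and one concludes $\widetilde{H}^2(U^p,\OO_\pf)=0$.
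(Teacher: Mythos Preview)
Your proposal is correct and follows essentially the same approach as the paper, which simply cites Proposition~4.3.6 of \cite{Emint}; the argument there is precisely the one you sketch, namely that pullback on $H^2$ of a finite \'etale cover of curves is multiplication by the degree on each component, so the transition maps in $\varinjlim_{U_p} H^2(M(U_pU^p)_{\overline{F}},\OO_\pf/\varpi^s)$ are cofinally multiplication by $p$-powers and hence eventually zero modulo $\varpi^s$. Your identification of the component/degree bookkeeping as the only real subtlety is accurate and matches what Emerton's argument handles.
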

\begin{proof}Exactly as for Proposition 4.3.6 of \cite{Emint} \end{proof}

Let $\Sigma(U^p)$ denote the set of primes $v$ in $F$ such that $U^p$ and $D$ are both unramified at $v$. If $v \in \Sigma(U^p)$ then there are Hecke operators $T_v, S_v$ acting on the spaces $H^1_D(U^p,\OO_\pf/\varpi^s)$, defined as in Section \ref{sec:hecke}. By continuity this extends to an action of the Hecke operators on the space $\widetilde{H}^1_D(U^p,\OO_\pf)$. We make the following definition: 
\begin{definition} The Hecke algebra $\T(U^p)$ is defined to be $\varprojlim_{U_p}\T_{(2,...,2)}(U^p U_p)$. We endow it with the projective limit topology given by taking the $\varpi$-adic topology on each finitely generated $\OO_\pf$-algebra $\T_{(2,...,2)}(U^p U_p)$.
\end{definition}
The topological $\OO_\pf$-algebra $\T(U^p)$ acts faithfully on $\widetilde{H}^1_D(U^p,\OO_\pf)$, and (viewed as a subalgebra of the topological algebra of continuous endomorphisms of this space) it is topologically generated by the endomorphisms of $\widetilde{H}^1_D(U^p,\OO_\pf)$ given by the Hecke operators $T_v, S_v$ for primes $v \nmid p$ such that $U^p$ and $D$ are both unramified at $v$. Note that the isomorphism of Lemma  \ref{changeweight} intertwines the action of the Hecke operators on $\widetilde{H}^1_D(U^p,\FF_k)_E$ (defined via the weight $k$ action at finite level) with the action of the Hecke operators on $\widetilde{H}^1_D(U^p,\OO_\pf)$. Hence $\widetilde{H}^1_D(U^p,\FF_k)_E$ may be naturally viewed as a faithful $\T(U^p)$-module. 

\begin{definition}An ideal $I$ of $\T(U^p)$ is \emph{Eisenstein} if the map $\lambda: T(U^p) \rightarrow T(U^p)/I$ satisfies $\lambda(T_q) = \epsilon_1(q) + \epsilon_2(q)$ and $\lambda(qS_q) = \epsilon_1(q)\epsilon_2 (q)$ for all $q \notin \{p\}\cup\Sigma(U^p)$, for some characters $\epsilon_1,\epsilon_2 : \Z_{p\Sigma(U^p)}^\times \rightarrow T(U^p)/I$. \end{definition}

The Hecke algebra $\T(U^p)$ is semilocal and Noetherian, with maximal ideals corresponding to mod $p$ Galois representations arising from Hecke eigenforms in (an extension of scalars of) $H^1_D(U^p,\OO_\pf/\varpi)$. Given a maximal ideal $\m$ of $\T(U^p)$ we denote the Hecke algebra's localisation at $\m$ by $\T(U^p)_{\rhobar}$, where $\rhobar: \Gal(\overline{F}/F) \rightarrow \GL_2(\overline{\F}_p)$ is the semisimple mod $p$ Galois representation attached to $\m$, and more generally for any $\T(U^p)$-module $M$, denote its localisation at $\m$ by $M_{\rhobar}$. We now fix such a $\rhobar$, and assume that it is irreducible. We have that $\widetilde{H}^1_D(U^p,\FFk)_{\rhobar,E}$ is a faithful $\T(U^p)_{\rhobar}$-module. Moreover, Lemma \ref{goodinvariants} allows us to identify $\T(U^p)_{\rhobar}$ with the inverse limit $\varprojlim_{U_p}\T_k(U^p U_p)_{\rhobar}$ (where the limit is taken over $U_p$ small enough such that $\rhobar$ arises from a maximal ideal of $\T_k(U^p U_p)$).

There is a deformation $\rho_{\rhobar,U^p}^m$ of $\rhobar$ to $\T(U^p)_{\rhobar}$ (taking a limit, over compact open subgroups $U_p$ of $G(\Q_p)$, of the deformations to $\T_{(2,...,2)}(U^pU_p)_{\rhobar}$). For a closed point $\mathscr{P} \in \Spec(\T(U^p)_{\rhobar}[\frac{1}{p}])$ we denote by $\rho(\mathscr{P})$ the attached Galois representation, defined over the characteristic $0$ field $k(\mathscr{P})$.

\begin{definition}Given a $\T(U^p)_{\rhobar}$-module $M$ and a closed point $\mathscr{P}$ of $\Spec(\T(U^p)_{\rhobar}[\frac{1}{p}])$, we denote by $M[\mathscr{P}]$ the $k(\mathscr{P})$-vector space $$\{m \in M\otimes_{\OO_\pf}k(\mathscr{P}) : Tm = 0 \hbox{ for all } T \in \mathscr{P}\}.$$ Suppose we have a homomorphism $\lambda: \T(U^p)_{\rhobar}\rightarrow E'$, where $E'/E$ is a finite field extension. We say that the system of Hecke eigenvalues $\lambda$ \emph{occurs in} a $\T(U^p)_{\rhobar}$-module $M$ if $M\otimes_{\OO_\pf} E'$ contains a non-zero element where $\T(U^p)$ acts via the character $\lambda$. In other words, $\lambda$ occurs in $M$ if and only if $M[\ker(\lambda)] \ne 0$. We say that $\lambda$ is Eisenstein whenever $\ker{\lambda}$ is.\end{definition}

We recall that after localising at a non-Eisenstein ideal completed cohomology is very simply related to the cohomology of our Shimura curves at finite levels.
\begin{definition}
If $z \in \C - \R$ then let $G(\Q)_z$ denote the stabiliser of $z$ in $G(\Q)$ (recall that $G(\Q)$ acts on $\C - \R$ via the $\tau_1$ factor of $G(\R)$. We say that a compact open subgroup $U$ of $G(\A_f)$ is \emph{neat} if the intersection $G(\Q)_z \cap gUg^{-1}=\{1\}$ for each $z \in \C - \R$ and each $g \in G(\A_f)$. This condition ensures that $G(\Q)$ acts on $(\C-\R) \times G(\A_f) / U$ without fixed points. We can make the same definition for compact open subgroups $V$ of $G'(\A_f)$.
\end{definition}
\begin{lemma}\label{goodinvariants}
Suppose that $U_p$ is a compact open subgroup of $G(\Q_p)$, and $U_p U^p$ is neat. Then the natural map $H^1(M(U_p U^p)_{\overline{F}},\FFk/\varpi^s)_{\rhobar}\rightarrow (H^1_D(U^p,\FFk/\varpi^s)_{\rhobar})^{U_p}$ is an isomorphism. Also, the natural map $H^1(M(U_p U^p)_{\overline{F}},\FFk)_{\rhobar}\rightarrow (\widetilde{H}^1_D(U^p,\FFk)_{\rhobar})^{U_p}$ is an isomorphism.
\end{lemma}
\begin{proof}
The first claim follows from an application of the Hochschild-Serre spectral sequence, as in \cite[Lemma 5.3.8]{Emlg}. The second claim follows from the first by passing to the inverse limit over $s$.
\end{proof}
We can now generalise \cite[Proposition 5.5.3]{Emlg}. Denote by $X$ the $\OO_\pf$-module \[\Hom_{\T(U^p)_{\rhobar}[G_F]}(\rho_{\rhobar,U^p}^m,\widetilde{H}^1_D(U^p,\OO_\pf)_{\rhobar}).\] There is a natural isomorphism
$$X \cong ((\rho_{\rhobar,U^p}^m)^\vee\otimes_{\T(U^p)_{\rhobar}}\widetilde{H}^1_D(U^p,\OO_\pf)_{\rhobar})^{G_F},$$ so $X$ can be viewed as a closed and saturated subrepresentation of $(\rho_{\rhobar,U^p}^m)^\vee\otimes_{\T(U^p)_{\rhobar}}\widetilde{H}^1_D(U^p,\OO_\pf)_{\rhobar}$. Hence by \cite[Proposition 2.4.13]{Emordone}, $X$ is a $\varpi$-adically admissible $G(\Q_p)$-representation over $\OO_\pf$.
\begin{proposition}\label{galfac}
The natural evaluation map $$ev:\rho_{\rhobar,U^p}^m \otimes_{\T(U^p)_{\rhobar}} X \rightarrow \widetilde{H}^1_D(U^p,\OO_\pf)_{\rhobar}$$ is an isomorphism.\end{proposition}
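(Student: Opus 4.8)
The plan is to reduce to a statement about the completed homology and to exploit the fact that, after localising at an irreducible $\rhobar$, the completed cohomology is (essentially) self-dual up to a twist, with a free action of the big Hecke algebra on a suitable quotient. The key input is the multiplicity-one type statement that the $G_F$-representation $\widetilde{H}^1_D(U^p,\OO_\pf)_{\rhobar}$, localised further at a closed point $\mathscr{P}$ with $\rho(\mathscr{P})$ irreducible, is isomorphic to a direct sum of copies of $\rho(\mathscr{P})$; this is a consequence of the corresponding classical statement at finite level (where it follows from the theory at level $U_pU^p$, e.g.\ via the Eichler--Shimura isomorphism and Carayol's results on $\rho_f$) together with exactness of localisation and passage to the limit, using that $\widetilde{H}^1$ is $\varpi$-adically admissible.

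First I would observe that $X$ is a finitely generated $\T(U^p)_{\rhobar}$-module: $\widetilde{H}^1_D(U^p,\OO_\pf)_{\rhobar}$ is $\varpi$-adically admissible, $\rho_{\rhobar,U^p}^m$ is free of rank $2$ over $\T(U^p)_{\rhobar}$, and $\T(U^p)_{\rhobar}$ is Noetherian, so the Hom-module is finitely generated over $\OO_\pf$ modulo $\varpi$ and then one concludes by completeness. Next, the evaluation map $ev$ is manifestly $\T(U^p)_{\rhobar}[G_F]$-equivariant, so both sides are finitely generated $\T(U^p)_{\rhobar}$-modules with continuous $G_F$-action, and it suffices to check $ev$ is an isomorphism after applying $-\otimes_{\OO_\pf}k(\mathscr{P})$ and localising at every closed point $\mathscr{P}$ of $\Spec(\T(U^p)_{\rhobar}[\tfrac1p])$, together with a mod-$\varpi$ / Nakayama argument to handle the (Eisenstein-free, hence irreducible) residual situation; since $\rhobar$ is irreducible, every such $\mathscr{P}$ has $\rho(\mathscr{P})$ residually irreducible, and by the multiplicity-one statement above $\widetilde{H}^1_D(U^p,\OO_\pf)_{\rhobar}\otimes k(\mathscr{P})$ is $\rho(\mathscr{P})^{\oplus r}$ for some $r$. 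Then $X\otimes k(\mathscr{P}) = \Hom_{k(\mathscr{P})[G_F]}(\rho(\mathscr{P}),\rho(\mathscr{P})^{\oplus r}) = k(\mathscr{P})^{\oplus r}$ by Schur's lemma (using $\End_{G_F}\rho(\mathscr{P}) = k(\mathscr{P})$, which again follows from irreducibility), and $ev\otimes k(\mathscr{P})$ becomes the tautological isomorphism $\rho(\mathscr{P})\otimes_{k(\mathscr{P})}k(\mathscr{P})^{\oplus r}\xrightarrow{\sim}\rho(\mathscr{P})^{\oplus r}$.

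To run the reduction cleanly I would first treat injectivity and surjectivity separately, or better, show $\coker(ev)$ and $\ker(ev)$ vanish. For the cokernel: it is a finitely generated $\T(U^p)_{\rhobar}$-module, and the computation above shows $\coker(ev)\otimes k(\mathscr{P}) = 0$ for all $\mathscr{P}$ after inverting $p$; combined with the fact (from the finite-level statement) that $\widetilde{H}^1_D(U^p,\OO_\pf/\varpi)_{\rhobar}$ is generated over $G_F$ by images of such Hom's mod $\varpi$ — i.e.\ the mod-$\varpi$ evaluation map is surjective — Nakayama over the semilocal Noetherian $\T(U^p)_{\rhobar}$ gives $\coker(ev) = 0$. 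For the kernel: $\rho_{\rhobar,U^p}^m\otimes_{\T(U^p)_{\rhobar}}X$ is $\OO_\pf$-torsion free (as $\widetilde{H}^1$ has torsion-free $\rhobar$-part by the Eisenstein-ness of the torsion), so $\ker(ev)$ is torsion-free, and it vanishes after $\otimes k(\mathscr{P})$ at every point, hence vanishes.

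The main obstacle I anticipate is the multiplicity-one / Galois-factorisation statement at the level of the limit, i.e.\ that $\widetilde{H}^1_D(U^p,\OO_\pf)_{\rhobar}$ really is a sum of copies of the universal deformation as a $\T(U^p)_{\rhobar}[G_F]$-module after inverting $p$ at each point. At finite level this is classical (it is exactly what underlies the definition of $\rho_{\rhobar,U^pU_p}$ and is where Carayol's local-global compatibility and the irreducibility of $\rhobar$ enter), but one must check that taking the direct limit over $U_p$ and then $\varpi$-adic completion does not destroy the freeness of the Hom-module or introduce spurious torsion — this is where one leans on $\varpi$-adic admissibility (Lemma following \cite[Theorem 2.2.11]{Emint}) and on the compatibility of the deformations $\rho_{\rhobar,U^pU_p}^m$ under change of $U_p$. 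Everything else is formal homological algebra over the Noetherian semilocal ring $\T(U^p)_{\rhobar}$.
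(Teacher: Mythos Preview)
Your proposal rests on a false premise: neither $X$ nor $\widetilde{H}^1_D(U^p,\OO_\pf)_{\rhobar}$ is finitely generated over $\T(U^p)_{\rhobar}$. The completed cohomology is an infinite-dimensional admissible continuous Banach $G(\Q_p)$-representation, and $X$ inherits this structure; the statement ``the Hom-module is finitely generated over $\OO_\pf$ modulo $\varpi$'' is simply wrong ($X/\varpi X$ is a smooth admissible $G(\Q_p)$-representation, which is infinite-dimensional over the residue field). Consequently your Nakayama arguments over $\T(U^p)_{\rhobar}$ do not apply, and the assertion that $\widetilde{H}^1_D(U^p,\OO_\pf)_{\rhobar}\otimes k(\mathscr{P})\cong\rho(\mathscr{P})^{\oplus r}$ for some \emph{finite} $r$ is likewise false.

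The paper's argument handles the infinite-dimensionality explicitly by working in the category of $\varpi$-adically admissible $G(\Q_p)$-representations. Surjectivity of $ev_E$ is obtained not pointwise but by a density-plus-closedness argument: the image of $ev_E$ contains the classical cohomology $H^1_D(U^p,\OO_\pf)_{\rhobar,E}$ (checked at classical points $\mathscr{P}$), which is dense in $\widetilde{H}^1_D(U^p,\OO_\pf)_{\rhobar,E}$; and since $X$ is itself $\varpi$-adically admissible, a result of Emerton (\cite[Proposition 3.1.3]{Emlg}) forces the image to be closed. Injectivity and the integral upgrade come from an abstract lemma on morphisms of $\varpi$-adically admissible representations: if $f:\pi_1\to\pi_2$ between $\OO_\pf$-torsion-free such representations is surjective after inverting $\varpi$ and injective on $(\pi_i/\varpi)[\m]$, then $f$ is an isomorphism. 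The hypothesis on $(\pi_i/\varpi)[\m]$ is verified via the irreducibility of $\rhobar$ (this is the correct replacement for your Nakayama step). If you want to salvage your pointwise strategy, you must first pass to $U_p$-invariants or otherwise cut down to something finitely generated, but then you lose control of the limit; the density/closedness route avoids this entirely.
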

\begin{proof}
First let $\mathscr{P} \in \Spec(\T(U^p)_{\rhobar}[\frac{1}{p}])$ be a prime ideal of the Hecke algebra arising from a Hecke eigenform in $$H^1(M(U^pU_p),\OO_\pf)_{\rhobar}\otimes_{\OO_\pf}k(\mathscr{P}).$$ 
Then the image of the map $$\rho(\mathscr{P}) \otimes_{k(\mathscr{P})}X[\mathscr{P}] \rightarrow \widetilde{H}^1_D(U^p,\OO_\pf)_{\rhobar}\otimes_{\OO_\pf} k(\mathscr{P})$$ induced by $ev$ contains $H^1(M(U^pU_p)_{\rhobar},\OO_\pf)
[\mathscr{P}].$ From this, it is straightforward to to see that the image of the map $$ev_E: E \otimes_{\OO_\pf}\rho_{\rhobar,U^p}^m \otimes_{\T(U^p)_{\rhobar}} X \rightarrow \widetilde{H}^1_D(U^p,E)_{\rhobar}$$ is dense, since it will contain the dense subspace $H^1_D(U^p,E)_{\rhobar}$. Now since $X$ is a $\varpi$-adically admissible $G(\Q_p)$-representation over $\OO_\pf$, \cite[Proposition 3.1.3]{Emlg} implies that the image of $ev_E$ is closed, hence $ev_E$ is surjective (this also follows from the theory of \cite{STei2}, see Proposition 3.1 and Theorem 3.5 of that paper). We now note that the evaluation map $$\rhobar \otimes_{k(\pf)} \Hom_{G_F}(\rhobar,H^1_D(U^p,\OO_\pf/\varpi)_{\rhobar}) \rightarrow H^1_D(U^p,\OO_\pf/\varpi)_{\rhobar}$$ is injective, by the irreducibility of $\rhobar$. We conclude the proof of the proposition by applying the following lemma, with $A=\T(U^p)_{\rhobar}$. Note that as in \cite[Proposition 5.3.5]{Emlg}, $X$ and $\widetilde{H}^1_D(U^p,\OO_\pf)_{\rhobar}$ are in fact $\varpi$-adically admissible representations of $G(\Q_p)$ over $\T(U^p)_{\rhobar}$.
\end{proof}
\begin{lemma}Let $\pi_1$ and $\pi_2$ be two $\varpi$-adically admissible $G(\Q_p)$ representations over a complete local Noetherian $\OO_\pf$-algebra $A$ (with maximal ideal \m), with $\pi_2$ torsion free as an $\OO_\pf$-module. Suppose $f: \pi_1 \rightarrow \pi_2$ is a continuous $A[G(\Q_p)]$-linear morphism,  satisfying \begin{enumerate} \item The induced map $\pi_1 \otimes_{\OO_\pf} E \rightarrow \pi_2 \otimes_{\OO_\pf} E$ is a surjection
\item The induced map $(\pi_1/\varpi \pi_1)[\m] \rightarrow (\pi_2/\varpi \pi_2)[\m]$ is an injection.\end{enumerate}
Then $f$ is an isomorphism.
\end{lemma}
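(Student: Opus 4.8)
The plan is to follow \cite[Lemma 3.1.6]{Emlg}: pass to Schikhof duals and reduce the assertion to a Nakayama-type argument for finitely generated modules over a completed group algebra. Fix a compact open uniform pro-$p$ subgroup $K\subseteq G(\Q_p)$ and set $\Lambda:=A[[K]]$; since $A$ is complete local Noetherian and $\OO_\pf[[K]]$ is Noetherian (Lazard), $\Lambda$ is a complete, in general noncommutative, local Noetherian ring, with maximal ideal $\M$ lying over $\m$ and residue field $A/\m=:\F$ (so that $\Lambda/\m\Lambda = \F[[K]]$, $\Lambda/\M=\F$). For a $\varpi$-adically admissible $A[G(\Q_p)]$-representation $\pi$ which is $\OO_\pf$-torsion free, write $\pi^\vee:=\Hom^{\mathrm{cont}}_{\OO_\pf}(\pi,\OO_\pf)$. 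I would first record the following facts from Emerton's theory of $\varpi$-adically admissible representations (\cite{Emordone}; see also \cite{Emint}): $\pi^\vee$ is a finitely generated, $\varpi$-torsion free $\Lambda$-module; the functor $\pi\mapsto\pi^\vee$ is exact and contravariant on such $\pi$, with a canonical biduality isomorphism $\pi\xrightarrow{\sim}(\pi^\vee)^\vee$; reduction mod $\varpi$ gives a natural isomorphism $\pi^\vee/\varpi\pi^\vee\cong\Hom_{\F}(\pi/\varpi\pi,\F)$, the torsion-freeness of $\pi$ being what kills the obstructing continuous $\mathrm{Ext}^1$; and a continuous $E$-linear functional on $\pi\otimes_{\OO_\pf}E$ is bounded, whence $\Hom^{\mathrm{cont}}_E(\pi\otimes_{\OO_\pf}E,E)\cong\pi^\vee\otimes_{\OO_\pf}E$.

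Set $M_i:=\pi_i^\vee$ and $g:=f^\vee\colon M_2\to M_1$. I would then translate the two hypotheses. Applying $\Hom^{\mathrm{cont}}_E(-,E)$ to the surjection of (1) shows that $g\otimes_{\OO_\pf}E\colon M_2\otimes_{\OO_\pf}E\to M_1\otimes_{\OO_\pf}E$ is injective; since $M_2$ is $\varpi$-torsion free it embeds into $M_2\otimes_{\OO_\pf}E$, so $g$ itself is injective. For (2), observe that $(\pi_i/\varpi)[\m]\cong\Hom_{\F}(M_i/\m M_i,\F)$ by tensor--hom adjunction (using $\varpi\in\m$); applying the $\F$-linear dual to the injection of (2) then shows that $\bar g\colon M_2/\m M_2\to M_1/\m M_1$, a map of finitely generated $\F[[K]]$-modules, is surjective.

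Next I would deduce that $g$ is surjective. Its cokernel $C$ is a finitely generated $\Lambda$-module, so by Nakayama's lemma (for $\Lambda$ complete local) it suffices to see $C\otimes_\Lambda\F=0$. But $C\otimes_\Lambda\F=\coker(M_2\otimes_\Lambda\F\to M_1\otimes_\Lambda\F)$, and $M_i\otimes_\Lambda\F=(M_i/\m M_i)\otimes_{\F[[K]]}\F$, so this cokernel is $\coker(\bar g)\otimes_{\F[[K]]}\F$, a quotient of $\coker(\bar g)=0$. Hence $g$ is an isomorphism. Applying $\Hom^{\mathrm{cont}}_{\OO_\pf}(-,\OO_\pf)$ and the biduality isomorphisms identifies $f$ with $g^\vee$ under $\pi_i\cong M_i^\vee$, so $f$ is an isomorphism as well.

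The Nakayama step and the module-theoretic bookkeeping are routine; the part that needs genuine care, and which I would write out in detail, is the dual dictionary. Specifically: the identification $\Hom^{\mathrm{cont}}_E(\pi_i\otimes_{\OO_\pf}E,E)\cong M_i\otimes_{\OO_\pf}E$ must be made carefully, so that (1) yields injectivity of $g$ itself rather than merely of its restriction to a dense submodule; and one must check the compatibility of $(-)^\vee$ with reduction mod $\varpi$ and with the passage from $\m$-torsion to $\m$-cotorsion, so that (2) supplies exactly the surjectivity modulo $\m$ (hence modulo $\M$) that Nakayama consumes. These are the two places where the $\varpi$-adic admissibility and the $\OO_\pf$-torsion-freeness of $\pi_1$ and $\pi_2$ are genuinely used.
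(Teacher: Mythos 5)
Your argument is correct, but it runs on the opposite side of the duality from the paper's. The paper never dualises: it works directly with the kernel $K$ and cokernel $C$ of $f$ as $\varpi$-adically admissible representations over $A$, and in place of Nakayama it uses the dual statement built into admissibility, namely that $\pi/\varpi=\bigcup_{i\ge1}(\pi/\varpi)[\m^i]$, so that a subquotient of $\pi_1/\varpi$ with vanishing $\m$-torsion must vanish. Concretely, hypothesis (2) kills $K/\varpi$ (hence $K$, by $\varpi$-adic separatedness), and then hypothesis (1) makes $C$ an $\OO_\pf$-torsion module whose $\varpi$-torsion injects into $\pi_1/\varpi$ via the snake lemma, so that (2) kills $C[\varpi][\m]$ and hence $C$. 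That route is shorter and avoids the entire dual dictionary which you correctly identify as the delicate part of your approach (exactness and biduality of $(-)^\vee$, the identification $\Hom^{\mathrm{cont}}_E(\pi\otimes_{\OO_\pf}E,E)\cong\pi^\vee\otimes_{\OO_\pf}E$, and compatibility of $(-)^\vee$ with reduction mod $\varpi$ and with $\m$-torsion versus $\m$-cotorsion). What your version buys is that the finishing move is literal Nakayama for the finitely generated $\Lambda$-module $\coker(g)$, which may feel more familiar; the cost is that all of the content is shifted into duality statements that must be quoted from \cite{Emordone} rather than proved. One small correction if you write it up: $\pi^\vee/\varpi\pi^\vee$ is the Pontryagin dual of $\pi/\varpi\pi$ over $\OO_\pf/\varpi$, not over $\F=A/\m$ (the module $\pi/\varpi\pi$ is not an $\F$-vector space; only its $\m$-torsion is), so the displayed isomorphism should read $\pi^\vee/\varpi\pi^\vee\cong\Hom_{\OO_\pf/\varpi}(\pi/\varpi\pi,\OO_\pf/\varpi)$, after which the identification of $((\pi/\varpi\pi)[\m])^\vee$ with $M/\m M$ goes through as you say.
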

\begin{proof}
First we show that $f$ is an injection. Denote the kernel of $f$ by $K$. It is a $\varpi$-adically admissible $G(\Q_p)$ representation over $A$, so in particular $$K/\varpi K=\bigcup_{i\ge 1} (K/\varpi K)[\m^i].$$ Since $\pi_2$ is $\OO_\pf$-torsion free, the image of $f$ is also $\OO_\pf$-torsion free, so there is an exact sequence
$$0 \rightarrow K/\varpi K \rightarrow \pi_1/\varpi \pi_1 \rightarrow \pi_2/\varpi \pi_2.$$
Our second assumption on $f$ hence implies that $(K/\varpi K)[\m] = 0$, so $(K/\varpi K)[\m^i] = 0$ for all $i$. Therefore $K/\varpi K = 0$, which shows that $K=0$, since $K$ is $\varpi-$adically separated. We now know that $f$ is an injection, and our first assumption on $f$ implies that it has $\OO_\pf$-torsion cokernel $C$. Since $\pi_2$ is $\OO_\pf$-torsion free there is an exact sequence
$$0 \rightarrow C[\varpi] \rightarrow \pi_1/\varpi \pi_1 \rightarrow \pi_2/\varpi \pi_2.$$
Since $\pi_1/\varpi \pi_1=\bigcup_{i\ge 1} (\pi_1/\varpi \pi_1) [\m^i]$ we have $C[\varpi] =\bigcup_{i\ge 1} C[\varpi][\m^i],$ but the second assumption on $f$ implies that $C[\varpi][\m]=0$. Therefore $C[\varpi]=0$, and since $C$ is $\OO_\pf$-torsion we have $C=0$. Therefore $f$ is an isomorphism.
\end{proof}

\subsection{Completed vanishing cycles}
We now assume that our tame level $U^p$ is unramified at the prime $\q$, and let $\up$ denote the tame level $U^p \cap U_0(\q)$. Fix a non-Eisenstein maximal ideal $\m$ of $\T(\up)$, with associated mod $p$ Galois representation $\rhobar$. We are going to `$\varpi$-adically complete' the constructions of subsection \ref{mazprinclass}.

\begin{definition}\label{smoothstuff}
For $\FF$ equal to either $\FFk$ or $r_*r^*\FFk$ we define $\OO_\pf$-modules
$$H^1_{\red}(\up,\FF)_{\rhobar} = \varinjlim_{U_p} H^1(\Mi(U_p\up)_{\overline{s}}, \FF)_{\rhobar},$$ $$X_\q (\up,\FFk)_{\rhobar}=\varinjlim_{U_p}X_\q (U_p\up,\FFk)_{\rhobar},$$ and $$\check{X}_\q(\up,\FFk)_{\rhobar}=\varinjlim_{U_p}\check{X}_\q(U_p\up,\FFk)_{\rhobar}.$$
\end{definition}
In this definition, the limits are taken over those $U_p$ which are sufficiently small that $\rhobar$ arises from a maximal ideal of $\T_k(U_p\up)$.
\begin{proposition}\label{smoothses}
Equation (\ref{ccses}) induces a short exact sequence, equivariant with respect to Hecke and $G_\q$ actions:
$$\minCDarrowwidth10pt\begin{CD}0@>>>H^1_{\red}(\up,\FFk)_{\rhobar} @>>>H^1_D(\up,\FFk)_{\rhobar}@>>>X_\q (\up,\FFk)(-1)_{\rhobar}@>>>0.\end{CD}$$
Similarly, equation (\ref{cccoses}) induces 
$$\minCDarrowwidth10pt\begin{CD}0@>>>\check{X}_\q(\up,\FFk)_{\rhobar}@>>>H^1_{\red}(\up,\FFk)_{\rhobar}@>>>H^1_{\red}(\up,r_*r^*\FFk)_{\rhobar}@>>>0.\end{CD}$$
When $\FFk = \OO_\pf$ all the terms in these sequences have smooth actions of $G(\Q_p)$, and the sequences are equivariant with respect to this action. In any case, after tensoring with $E$ over $\OO_\pf$ the terms in these sequences acquire actions of $G(\Q_p)$ and again the sequences are equivariant with respect to this action. 
\end{proposition}
\begin{proof}
This just follows from taking the direct limits over $U_p$ of the relevant exact sequences.
\end{proof}
\begin{definition}\label{completedstuff}
We define $\T(\up)_{\rhobar}$-modules (again $\FF$ equals either $\FFk$ or $r_*r^* \FFk$)
$$\widetilde{H}^1_\red(\up,\FF)_{\rhobar} = \varprojlim_s H^1_{\red}(\up,\FF)_{\rhobar}/\varpi^sH^1_{\red}(\up,\FF)_{\rhobar},$$ $$\widetilde{X}_\q(\up,\FFk)_{\rhobar} = \varprojlim_s X_\q(\up,\FFk)_{\rhobar}/\varpi^sX_\q (\up,\FFk)_{\rhobar}$$ and $$\widetilde{\check{X}}_\q(\up,\FFk)_{\rhobar} = \varprojlim_s \check{X}_\q(\up,\FFk)_{\rhobar}/\varpi^s\check{X}_\q(\up,\FFk)_{\rhobar}.$$
\end{definition}
We have made these definitions after localising at non-Eisenstein maximal ideals to avoid issues of $\OO_\pf$-torsion in any of our modules at finite level. After $\varpi$-adically completing the short exact sequences of Proposition \ref{smoothses}, we obtain
\begin{proposition}\label{prop:es}
\sloppy We have short exact sequences, equivariant with respect to $\T(\up)_{\rhobar}$ and $G_\q$ actions:
$$\minCDarrowwidth10pt\begin{CD}0@>>>\widetilde{H}^1_{\red}(\up,\FFk)_{\rhobar} @>>>\widetilde{H}^1_D(\up,\FFk)_{\rhobar}@>>>\widetilde{X}_\q(\up,\FFk)_{\rhobar}(-1)@>>>0,\end{CD}$$
$$\minCDarrowwidth10pt\begin{CD}0@>>>\widetilde{\check{X}}_\q(\up,\FFk)_{\rhobar}@>>>\widetilde{H}^1_{\red}(\up,\FFk)_{\rhobar}@>>>\widetilde{H}^1_{\red}(\up,r_*r^*\FFk)_{\rhobar}@>>>0.\end{CD}$$
When $\FFk = \OO_\pf$ all the terms in these sequences have continuous actions of $G(\Q_p)$, and the sequences are short exact sequences of $\varpi$-adically admissible $G(\Q_p)$-representations over $\OO_\pf$. In any case, after tensoring with $E$ over $\OO_\pf$ the terms in these sequences acquire actions of $G(\Q_p)$ and again the sequences are equivariant with respect to this action. 
\end{proposition}
\begin{proof}
The existence of a short exact sequence of $\OO_\pf$-modules follows from the fact that at finite levels localising the short exact sequences of  Proposition \ref{smoothses} at non-Eisenstein ideals gives short exact sequences of $\OO_\pf$-torsion free modules. Hence $\varpi$-adically completing preserves exactness. The fact that we obtain a short exact sequence of $\varpi$-adically admissible $\OO_\pf[G(\Q_p)]$-modules when $\FFk=\OO_\pf$ follows from Proposition 2.4.4 of \cite{Emordone}.
\end{proof}
It is now straightforward to deduce the analogue of Lemma \ref{changeweight}:
\begin{lemma}There are canonical $G(\Q_p)$, $G_\q$ and Hecke equivariant isomorphisms
$$\widetilde{H}^1_{\red}(\up,\FFk)_{\rhobar,E} \cong \widetilde{H}^1_{\red}(\up,\OO_\pf)_{\rhobar}\otimes_{\OO_\pf} W_{k,\pf},$$
$$\widetilde{\check{X}}_\q(\up,\FFk)_{\rhobar,E} \cong \widetilde{\check{X}}_\q(\up,\OO_\pf)_{\rhobar}\otimes_{\OO_\pf} W_{k,\pf},$$
$$\widetilde{X}_\q(\up,\FFk)_{\rhobar,E} \cong \widetilde{X}_\q(\up,\OO_\pf)_{\rhobar}\otimes_{\OO_\pf} W_{k,\pf}.$$
\end{lemma}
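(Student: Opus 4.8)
The plan is to run, for each of the three modules, the same argument that establishes Lemma~\ref{changeweight}, namely \cite[Theorem 2.2.17]{Emint}. The only new input is that the constructions of Section~\ref{sec:van} --- the functors $R\Psi$ and $R\Phi$, the normalisation push-forward $r_*r^*$, and hence the modules $X_\q$ and $\check{X}_\q$ --- are $\OO_\pf/\varpi^s$-linear and functorial, so they are compatible with the ``twist by a fixed algebraic representation'' that produces $\FFk$ from the trivial coefficient system.

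First I would recall from Subsection~\ref{subsec:intco} that the \'etale sheaf $\FFk/\varpi^s$ on $\Mi(U_p\up)$ is obtained by descent along the cover $\Mi(U_1)\to\Mi(U_p\up)$ from the $U_p\up$-module $W^0_{k,\pf}/\varpi^s$, on which the prime-to-$p$ part of the level acts trivially and on which $U_p$ acts through $G(\Q_p)$. If $U_p$ is small enough that it acts trivially on $W^0_{k,\pf}/\varpi^s$ --- which we may always arrange by passing far enough through the direct limit in $U_p$, exactly as in the remark following the construction of $\FFk$ --- then $\FFk/\varpi^s$ on $\Mi(U_p\up)$, and hence on $\Mi(U_p\up)\otimes\overline{k_\q}$ and on its normalisation, is simply the constant sheaf with stalk $W^0_{k,\pf}/\varpi^s$, i.e. the trivial coefficient sheaf $\OO_\pf/\varpi^s$ tensored with $W^0_{k,\pf}/\varpi^s$. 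Since $R\Psi$ and $R\Phi$ are $\OO_\pf/\varpi^s$-linear triangulated functors, since $r_*r^*$ is $\OO_\pf/\varpi^s$-linear and exact ($r$ being finite), and since $W^0_{k,\pf}/\varpi^s$ is flat over $\OO_\pf/\varpi^s$, applying any of these functors and then passing to cohomology commutes with $-\otimes W^0_{k,\pf}/\varpi^s$. Thus at each such finite level
\[
H^1\big(\Mi(U_p\up)\otimes\overline{k_\q},\FFk/\varpi^s\big)\;\cong\;H^1\big(\Mi(U_p\up)\otimes\overline{k_\q},\OO_\pf/\varpi^s\big)\otimes_{\OO_\pf}W^0_{k,\pf}/\varpi^s,
\]
and likewise with $\FFk$ replaced by $r_*r^*\FFk$, and with $H^1$ of the special fibre replaced by $X_\q(\,\cdot\,)$ or $\check{X}_\q(\,\cdot\,)$; here one uses that for a $\Z_p$-sheaf these last two modules are by definition the inverse limit over $s$ of their finite-coefficient analogues, and that $-\otimes_{\OO_\pf}W^0_{k,\pf}$ commutes with this inverse limit because $W^0_{k,\pf}$ is finite free.

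It then remains to upgrade these finite-level isomorphisms to canonical $G(\Q_p)$-equivariant ones after forming $\varinjlim_{U_p}$ and then $\varprojlim_s$. This is precisely Emerton's argument for \cite[Theorem 2.2.17]{Emint}: at a fixed level the trivialisation of $\FFk/\varpi^s$ is not $G(\Q_p)$-equivariant, but when the transition maps of the direct system over $U_p$ are rewritten in terms of trivial-coefficient cohomology they acquire the $G(\Q_p)$-action on $W^0_{k,\pf}$, so that in the limit $\widetilde{H}^1_{\red}(\up,\FFk)$, $\widetilde{X}_\q(\up,\FFk)$ and $\widetilde{\check{X}}_\q(\up,\FFk)$ become canonically $\widetilde{H}^1_{\red}(\up,\OO_\pf)\otimes W^0_{k,\pf}$ and so on. The $G_\q$-action and the Hecke action are carried along automatically, since $R\Psi$, $R\Phi$, $r_*r^*$ and all the transition and Hecke maps are functorial and commute with tensoring by the fixed module $W^0_{k,\pf}/\varpi^s$. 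Localising at $\m$ is exact and commutes with $-\otimes_{\OO_\pf}W^0_{k,\pf}$, which yields the three displayed isomorphisms.

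The only genuinely delicate point, exactly as in Lemma~\ref{changeweight} itself, will be this last $G(\Q_p)$-equivariance bookkeeping; everything else is formal, given the $\OO_\pf/\varpi^s$-linearity and functoriality of the nearby- and vanishing-cycle constructions. One could in principle shorten the list of cases by invoking the short exact sequences of Propositions~\ref{smoothses} and~\ref{prop:es}, whose middle term $\widetilde{H}^1_D(\up,\FFk)_\m$ already carries the change-of-weight isomorphism; but one would still have to produce the isomorphism for, say, $\widetilde{H}^1_{\red}$ and $\widetilde{H}^1_{\red}(\up,r_*r^*\FFk)$ compatibly with the sub/quotient structure, so it seems cleanest simply to repeat Emerton's argument for each of the three functors directly.
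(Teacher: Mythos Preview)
Your proposal is correct and follows exactly the approach the paper intends: the paper simply states that ``it is now straightforward to deduce the analogue of Lemma~\ref{changeweight}'' and gives no further argument, and your writeup is a faithful expansion of that remark, invoking \cite[Theorem 2.2.17]{Emint} together with the $\OO_\pf/\varpi^s$-linearity and functoriality of the vanishing-cycle constructions. The paper offers nothing beyond this, so your level of detail exceeds what is in the original.
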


\subsection{Mazur's principle}We can now proceed as in Theorem 6.2.4 of \cite{Emlg}, following Jarvis's approach to Mazur's principle over totally real fields \cite{JarMazPrin}. We retain a fixed absolutely irreducible mod $p$ Galois representation $\rhobar$, coming from a maximal ideal of $\T(\up)$. We have the following proposition
\begin{proposition}\label{unramred}
The injection $$\widetilde{H}^1_{\red}(\up,\OO_\pf)_{\rhobar} \hookrightarrow \widetilde{H}^1_D(\up,\OO_\pf)_{\rhobar}$$ induces an isomorphism
$$\widetilde{H}^1_{\red}(\up,\OO_\pf)_{\rhobar} \cong (\widetilde{H}^1_D(\up,\OO_\pf)_{\rhobar})^{I_\q}.$$
\end{proposition}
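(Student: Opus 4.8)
The plan is to identify both sides with the $\rhobar$-part of the completed cohomology of the special fibre, using the two short exact sequences of Proposition~\ref{prop:es} together with the classical geometry of $\Mi(\up)$. First I would recall that $\widetilde{H}^1_{\red}(\up,\OO_\pf)$ was built from the cohomology of $\Mi(\up)\otimes\overline{k_\q}$, whose special fibre has two smooth irreducible components crossing at supersingular points (this is exactly the Iwahori-level picture in \cite{JarMazPrin}); the module $r_*r^*\FFk$ in the cospecialisation sequence accounts for the normalisation of these components. The key local input is that the inertia group $I_\q$ acts on $\widetilde{H}^1_D(\up,\OO_\pf)$ through the quotient coming from the monodromy, and that the invariants under $I_\q$ are computed by the specialisation sequence: an element is $I_\q$-invariant precisely when its image in $\widetilde{X}_\q(\up,\OO_\pf)(-1)$ vanishes, because $I_\q$ acts trivially on $\widetilde{H}^1_{\red}(\up,\OO_\pf)$ (this is cohomology of a scheme over $\overline{k_\q}$, on which $G_\q$ acts through $G_{k_\q}$, hence $I_\q$ trivially) and acts on the $X_\q$-part via a nontrivial unipotent-type monodromy operator whose fixed points one controls after localising at a non-Eisenstein $\rhobar$.

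So the heart of the argument is the following: from the first sequence of Proposition~\ref{prop:es} (with trivial coefficients, localised at $\rhobar$),
$$\minCDarrowwidth10pt\begin{CD}0@>>>\widetilde{H}^1_{\red}(\up,\OO_\pf)_{\rhobar} @>>>\widetilde{H}^1_D(\up,\OO_\pf)_{\rhobar}@>>>\widetilde{X}_\q(\up,\OO_\pf)_{\rhobar}(-1)@>>>0,\end{CD}$$
taking $I_\q$-invariants (left exact) gives
$$\widetilde{H}^1_{\red}(\up,\OO_\pf)_{\rhobar} \hookrightarrow (\widetilde{H}^1_D(\up,\OO_\pf)_{\rhobar})^{I_\q} \hookrightarrow \widetilde{H}^1_{\red}(\up,\OO_\pf)_{\rhobar}^{I_\q}\oplus (\text{something mapping to } \widetilde{X}_\q(-1)^{I_\q}),$$
and since $I_\q$ acts trivially on $\widetilde{H}^1_{\red}$ the first map is the identity; it therefore suffices to show that the composite $(\widetilde{H}^1_D(\up,\OO_\pf)_{\rhobar})^{I_\q}\to \widetilde{X}_\q(\up,\OO_\pf)_{\rhobar}(-1)^{I_\q}$ is zero, equivalently that $\widetilde{X}_\q(\up,\OO_\pf)_{\rhobar}(-1)^{I_\q}=0$, i.e.\ that the monodromy operator $N$ has no nonzero fixed vectors on $\widetilde{X}_\q(\up,\OO_\pf)_{\rhobar}$. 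Here I would use the monodromy pairing and the injection $\lambda\colon X_\q\hookrightarrow\check X_\q$ from Section~\ref{sec:van}, combined with the identification of $\check X_\q$ (or $\widetilde{H}^1_{\red}$ modulo the character group) with a space of the type appearing in \cite{JarMazPrin}, where the non-Eisenstein hypothesis on $\rhobar$ forces the relevant character-group contribution (the toric part, on which $I_\q$ would act trivially) to vanish after localisation — this is precisely Jarvis's argument that $\rhobar$ non-Eisenstein cannot come from the `old' part at $\q$ in the wrong way, and it is the step I expect to require the most care, since it is where the arithmetic of the special fibre and the Eichler–Shimura-type description of the component groups enters.

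Finally, I would pass to the completed-coefficient statement: taking the inverse limit over $s$ of the mod $\varpi^s$ versions is harmless because all the modules in sight are $\OO_\pf$-torsion free after localisation at $\rhobar$ (by the $\OO_\pf$-torsion-freeness statements recorded in the proof of Proposition~\ref{prop:es} and the Eisenstein-ness of the torsion in $H^1$, \cite[Lemma 4]{DT}), so $I_\q$-invariants commute with the $\varpi$-adic completion; and the reduction to trivial coefficients $\FFk\rightsquigarrow\OO_\pf$ is Lemma~\ref{changeweight} and its analogue for $\widetilde H^1_{\red}$, since $W^0_{k,\pf}$ carries no $G_\q$-action and in particular is fixed by $I_\q$. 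Assembling these, $(\widetilde{H}^1_D(\up,\OO_\pf)_{\rhobar})^{I_\q}=\widetilde{H}^1_{\red}(\up,\OO_\pf)_{\rhobar}$, which is the claim. The main obstacle, to reiterate, is the vanishing $\widetilde{X}_\q(\up,\OO_\pf)_{\rhobar}(-1)^{I_\q}=0$ — controlling the $I_\q$-fixed (equivalently $N$-fixed) part of the completed vanishing-cycles module, which is where one must invoke the precise structure of the special fibre at Iwahori level and the non-Eisenstein hypothesis.
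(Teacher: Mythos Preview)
Your overall shape is right --- use the first short exact sequence of Proposition~\ref{prop:es}, take $I_\q$-invariants, and invoke the monodromy --- but the crucial step is misformulated and, as written, false. You claim it suffices to show $\widetilde{X}_\q(\up,\OO_\pf)_{\rhobar}(-1)^{I_\q}=0$, ``i.e.\ that $N$ has no nonzero fixed vectors on $\widetilde{X}_\q$''. But $I_\q$ acts \emph{trivially} on $\widetilde{X}_\q$ (it is built from the combinatorics of the dual graph of the special fibre, on which $G_\q$ acts through $G_{k_\q}$), and since $\q\nmid p$ the Tate twist $(-1)$ is also unramified at $\q$. Hence $\widetilde{X}_\q(\up,\OO_\pf)_{\rhobar}(-1)^{I_\q}=\widetilde{X}_\q(\up,\OO_\pf)_{\rhobar}(-1)$, which is certainly nonzero. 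Likewise $N$ acts as zero on the subquotient $\widetilde{X}_\q$, so every vector there is $N$-fixed.

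What you actually need is that the \emph{image} of $(\widetilde{H}^1_D(\up,\OO_\pf)_{\rhobar})^{I_\q}$ in $\widetilde{X}_\q(\up,\OO_\pf)_{\rhobar}(-1)$ is zero, and this is not equivalent to the vanishing of the target. The correct mechanism is the Picard--Lefschetz description of the inertia action (this is exactly what the paper cites as (2.2) of \cite{Raj}): a topological generator $\sigma$ of tame inertia acts on $\widetilde{H}^1_D$ as $1+N$, and $N$ factors as
\[
\widetilde{H}^1_D \twoheadrightarrow \widetilde{X}_\q(-1) \xrightarrow{\ \lambda\ } \widetilde{\check{X}}_\q \hookrightarrow \widetilde{H}^1_{\red} \hookrightarrow \widetilde{H}^1_D,
\]
where $\lambda$ is the monodromy-pairing map of Section~\ref{sec:van}. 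Thus $\ker(N)$ is the preimage of $\ker(\lambda)$ under the quotient map, and since $\lambda$ is injective (already before localisation), $\ker(N)=\widetilde{H}^1_{\red}$. This is the content of \cite[Proposition~4]{Raj} at finite level; passing to the limit is then exactly as you describe. So your proof can be repaired, but only by replacing the false vanishing claim for $\widetilde{X}_\q(-1)^{I_\q}$ with the factorisation of $N$ through the injective map $\lambda$; the non-Eisenstein hypothesis is not what makes this step work.
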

\begin{proof}
This follows from Proposition 4 and (2.2) in \cite{Raj}.
\end{proof}
There is a Hecke operator $T_\q$ acting on $H^1_D(\up,\OO_\pf)$, which extends by continuity to give an action of $T_\q$ on $\widetilde{H}^1_D(\up,\OO_\pf)$.

\begin{lemma}\label{frobaction}
If we view $\widetilde{H}^1_\mathrm{red}(\up,\OO_\pf)_{\rhobar}$ as a submodule of $\widetilde{H}^1_D(\up,\OO_\pf)_{\rhobar}$ via the first exact sequence of Proposition \ref{prop:es}, then it is stable under the action of $T_\q$. The induced action of $(\mathbf{N}\q)^{-1}T_\q$ on $\widetilde{X}_\q(\up,\OO_\pf)_{\rhobar}(-1)$ is equal to the action of the arithmetic Frobenius element of $\Gal(\overline{k}_\q/k_\q)$. Viewing $\widetilde{\check{X}}_\q(\up,\OO_\pf)_{\rhobar}$ as a submodule of $\widetilde{H}^1_\mathrm{red}(\up,\OO_\pf)_{\rhobar}$ via the second exact sequence of Proposition \ref{prop:es}, it is also $T_\q$-stable, with the action of $T_\q$ equal to the action of arithmetic Frobenius.
\end{lemma}
\begin{proof}
It is sufficient to check the statement of the lemma on the dense subspaces ${H}^1_\mathrm{red}(\up,\OO_\pf)_{\rhobar}$, $\check{X}_\q (\up,\OO_\pf)_{\rhobar}$ and $X_\q(\up,\OO_\pf)_{\rhobar}(-1)$. The lemma now follows from local--global compatibility in the case of twists of the Steinberg representation by unramified characters, as proven in Section 6 of \cite{Car}.
\end{proof}

\begin{lemma}\label{oldred}
There is a natural isomorphism $$\widetilde{H}^1_{\red}(\up,r_*r^*\OO_\pf)_{\rhobar} \cong \widetilde{H}^1_D(U^p,\OO_\pf)_{\rhobar}^{\oplus 2}.$$
\end{lemma}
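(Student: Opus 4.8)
The plan is to reduce the statement to the Deligne--Rapoport-style description of the special fibre at $\q$ of the Iwahori-level model $\Mi(\up U_p)$, combined with smooth and proper base change for a good-reduction model of $M(U^p U_p)$.

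First, since $U^p$ is unramified at $\q$ and $D$ is split at $\q$, the curve $M(U^p U_p)$ has good reduction at $\q$, and I would fix a smooth proper $\OO_\q$-model $\mathscr{M}$ of it (compatibly as $U_p$ shrinks), with smooth special fibre $\mathscr{M}\otimes\overline{k_\q}$. I would then invoke Sections 8 and 10 of \cite{JarMazPrin} (see also Section 6 of \cite{Car}): the special fibre $\Mi(\up U_p)\otimes\overline{k_\q}$ is semistable, and the two degeneracy maps $\rho_1,\rho_{\eta_\q}\colon M(\up U_p)\to M(U^p U_p)$ extend over $\OO_\q$ and identify the normalisation $\widetilde{\Mi(\up U_p)\otimes\overline{k_\q}}$ with the disjoint union $(\mathscr{M}\otimes\overline{k_\q})\sqcup(\mathscr{M}\otimes\overline{k_\q})$, compatibly with shrinking $U_p$ and with the prime-to-$p\q$ Hecke correspondences. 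This is the only non-formal input; the rest is bookkeeping with base change and limits.

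Next, working with $\OO_\pf/\varpi^s$-coefficients at finite level, I would argue as follows. The normalisation map $r$ is finite, so $R^{>0}r_*=0$ and $r^*(\OO_\pf/\varpi^s)=\OO_\pf/\varpi^s$; hence $H^1(\Mi(\up U_p)\otimes\overline{k_\q},r_*r^*\OO_\pf/\varpi^s)\cong H^1(\widetilde{\Mi(\up U_p)\otimes\overline{k_\q}},\OO_\pf/\varpi^s)\cong H^1(\mathscr{M}\otimes\overline{k_\q},\OO_\pf/\varpi^s)^{\oplus 2}$. Smooth and proper base change for $\mathscr{M}/\OO_\q$ identifies this with $H^1(\mathscr{M}\otimes\overline{L},\OO_\pf/\varpi^s)^{\oplus 2}=H^1(M(U^p U_p)_{\overline{F}},\OO_\pf/\varpi^s)^{\oplus 2}$ (using that the generic fibre of $\mathscr{M}$ is $M(U^p U_p)_L$ and that étale cohomology with finite coefficients does not change under extension of the algebraically closed base field). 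All of this is natural in $U_p$ and in $s$.

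Finally, I would pass to the direct limit over $U_p$ --- exact, and carrying the $G(\Q_p)$-action read off from the transition maps --- to get $H^1_{\red}(\up,r_*r^*\OO_\pf/\varpi^s)\cong H^1_D(U^p,\OO_\pf/\varpi^s)^{\oplus 2}$, and then take $\varprojlim_s$; using that $H^2$ of our proper curves is $\OO_\pf$-torsion free (so $H^1(-,\OO_\pf)/\varpi^s\cong H^1(-,\OO_\pf/\varpi^s)$), this recovers $\widetilde{H}^1_{\red}(\up,r_*r^*\OO_\pf)$ and $\widetilde{H}^1_D(U^p,\OO_\pf)^{\oplus 2}$ in the sense of \S\ref{subsec:ccoho} and Definition \ref{completedstuff}. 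The resulting isomorphism is $G(\Q_p)$- and prime-to-$p\q$ Hecke equivariant. (If one also wants the $G_\q$-action, it is unramified on the right-hand side, the two summands being interchanged or twisted by Frobenius according to the Atkin--Lehner structure of the two components, compare Lemma \ref{frobaction}; but this is not needed for the statement.) The main obstacle will be locating and quoting the component structure of $\Mi(\up U_p)\otimes\overline{k_\q}$ in precisely the form used above --- namely that the two components are copies of the good-reduction fibre via the two degeneracy maps; granting that, the argument is routine.
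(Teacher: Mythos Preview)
Your proposal is correct and is essentially the paper's approach: the paper's entire proof is the single line ``This follows from Lemma 16.1 in \cite{JarMazPrin}'', and what you have written is precisely an unpacking of that citation---identifying the normalisation of the special fibre with two copies of the good-reduction special fibre via the degeneracy maps, and then doing the routine base-change and limit bookkeeping.
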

\begin{proof}
This follows from Lemma 16.1 in \cite{JarMazPrin}.
\end{proof}
\begin{lemma}\label{monodromy}
The monodromy pairing maps $$\lambda: X_\q(U_p\up,\OO_\pf) \rightarrow \check{X}_\q(U_p\up,\OO_\pf)$$ induce a $G_\q$ and $\T(\up)_{\rhobar}$-equivariant isomorphism $$\widetilde{X}_\q(\up,\OO_\pf)_{\rhobar} \cong \widetilde{\check{X}}_\q(\up,\OO_\pf)_{\rhobar}.$$
\end{lemma}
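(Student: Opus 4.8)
The plan is to reduce the assertion to a property of $\lambda$ at finite level --- that its cokernel is \emph{Eisenstein} --- after which passing to the direct limit over $U_p$ and $\varpi$-adically completing is formal.

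For each $U_p$ the curve $\Mi(U_p\up)$ has semistable reduction at $\q$, and by Section \ref{sec:van} the monodromy pairing gives an injection $\lambda: X_\q(U_p\up,\OO_\pf) \hookrightarrow \check{X}_\q(U_p\up,\OO_\pf)$ fitting into a short exact sequence
\[
0 \to X_\q(U_p\up,\OO_\pf) \xrightarrow{\lambda} \check{X}_\q(U_p\up,\OO_\pf) \to \coker(\lambda) \to 0.
\]
The input I would invoke is that every maximal ideal of $\T_{(2,...,2)}(U_p\up)$ in the support of $\coker(\lambda)$ is Eisenstein --- this is the analogue for these Shimura curves of the classical fact that component groups of Jacobians with semistable reduction are Eisenstein, due to \cite{Ribet100} in the case $F = \Q$ and available from the methods of \cite{Raj} over a general totally real field. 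Since $\rhobar$ is irreducible, the maximal ideal it determines in $\T_{(2,...,2)}(U_p\up)$ is not Eisenstein, hence not in that support, so localising the displayed sequence at $\rhobar$ already yields an isomorphism $X_\q(U_p\up,\OO_\pf)_{\rhobar} \xrightarrow{\sim} \check{X}_\q(U_p\up,\OO_\pf)_{\rhobar}$ of $\OO_\pf$-modules, for every $U_p$. Passing to the direct limit over $U_p$ --- localisation, filtered colimits and the formation of kernels and cokernels all commuting --- yields an isomorphism of smooth $G(\Q_p)$-representations over $\OO_\pf$, namely $X_\q(\up,\OO_\pf)_{\rhobar} \xrightarrow{\sim} \check{X}_\q(\up,\OO_\pf)_{\rhobar}$; applying $\varprojlim_s(-/\varpi^s)$ to this isomorphism then gives the desired isomorphism $\widetilde{X}_\q(\up,\OO_\pf)_{\rhobar} \cong \widetilde{\check{X}}_\q(\up,\OO_\pf)_{\rhobar}$ (here one uses that $\T(\up)$ is semilocal, so that $(-)_{\rhobar}$ is projection onto a direct factor and commutes with the $\varpi$-adic completion). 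Equivariance for the $\T(\up)$-, $G(\Q_p)$- and $G_\q$-actions is inherited at every stage from that of $\lambda$ at finite level, since $\lambda$ commutes with the Hecke correspondences, with the transition maps of the direct system, and with the Galois action.

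The step I expect to be the real obstacle is establishing the Eisenstein property of $\coker(\lambda)$ in the totally real setting: one has to identify the cokernel of the monodromy pairing, as it appears in Rajaei's vanishing-cycles formalism, with the appropriate component group, and then invoke an Eisenstein-ness statement for component groups of the Shimura curves $\Mi(U_p\up)$, which carry Iwahori level at $\q$. Everything afterwards --- interchanging $(-)_{\rhobar}$ with the colimit over $U_p$ and with $\varpi$-adic completion --- is routine, using only that $\T(\up)$ is semilocal and Noetherian; in particular no torsion-freeness of $X_\q$ or $\check{X}_\q$ is needed for this lemma, since $\lambda_{\rhobar}$ is already an isomorphism before completing.
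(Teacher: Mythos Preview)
Your proposal is correct and follows essentially the same approach as the paper. The paper's proof is a one-line citation of Proposition~5 in \cite{Raj}, which is precisely the input you identify as the ``real obstacle'': that the cokernel of the monodromy pairing at finite level is Eisenstein. You have simply unpacked that citation and filled in the routine passage to the direct limit over $U_p$ and $\varpi$-adic completion, which the paper leaves implicit.
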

\begin{proof}
This follows from Proposition 5 in \cite{Raj}.
\end{proof}
Before giving an analogue of Mazur's principle, we will make explicit the connection between the modules $\widetilde{X}_\q(\up,\OO_\pf)_{\rhobar}$ and a space of newforms. There are natural level raising maps $i:\widetilde{H}^1_D(U^p,\OO_\pf)_{\rhobar}^{\oplus 2} \rightarrow \widetilde{H}^1_D(\up,\OO_\pf)_{\rhobar}$, and their adjoints $i^\dagger: \widetilde{H}^1_D(\up,\OO_\pf)_{\rhobar} \rightarrow \widetilde{H}^1_D(U^p,\OO_\pf)_{\rhobar}^{\oplus 2}$, defined by taking the limit of maps at finite level defined as in the paragraph after Theorem \ref{RRESclass}.
\begin{definition}\label{def:new}
The space of $\q$-newforms $$\widetilde{H}^1_D(\up,\OO_\pf)_{\rhobar}^{\q\mhyphen\mathrm{new}}$$ is defined to be the kernel of the map $$i^\dagger: \widetilde{H}^1_D(\up,\OO_\pf)_{\rhobar} \rightarrow \widetilde{H}^1_D(U^p,\OO_\pf)_{\rhobar}^{\oplus 2}.$$
\end{definition}
Composing the injection $\widetilde{H}^1_\mathrm{red}(\up,\OO_\pf)_{\rhobar} \hookrightarrow \widetilde{H}^1_D(\up,\OO_\pf)_{\rhobar}$ with $i^\dagger$ gives a map, which we also denote by $i^\dagger$, from $\widetilde{H}^1_\mathrm{red}(\up,\OO_\pf)_{\rhobar}$ to $\widetilde{H}^1_D(U^p,\OO_\pf)_{\rhobar}^{\oplus 2}.$ Recall that we can also define a map $\omega$ between these spaces, by composing the natural map $\widetilde{H}^1_\mathrm{red}(\up,\OO_\pf)_{\rhobar} \rightarrow \widetilde{H}^1_\mathrm{red}(\up,r_*r^*\OO_\pf)_{\rhobar}$ with the isomorphism of Lemma \ref{oldred}. However these maps are not equal, as remarked in \cite[pg. 447]{Ribet100}. Fortunately, they have the same kernel, as we now show (essentially by following the proof of \cite[Theorem 3.11]{Ribet100}).
\begin{proposition}\label{prop:fix}
The kernel of the map $$i^\dagger: \widetilde{H}^1_\mathrm{red}(\up,\OO_\pf)_{\rhobar}\rightarrow\widetilde{H}^1_D(U^p,\OO_\pf)_{\rhobar}^{\oplus 2}$$ is equal to the image of $\widetilde{\check{X}}_\q(\up,\OO_\pf)_{\rhobar}$ in $\widetilde{H}^1_\mathrm{red}(\up,\OO_\pf)_{\rhobar}$.
\end{proposition}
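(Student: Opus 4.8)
The plan is to reduce the statement to the equality $\ker(i^\dagger)=\ker(\omega)$ of submodules of $\widetilde{H}^1_{\mathrm{red}}(\up,\OO_\pf)_{\rhobar}$, and then to compare the two maps explicitly, following the argument of \cite[Theorem~3.11]{Ribet100}. By the second short exact sequence of Proposition~\ref{prop:es}, together with the isomorphism of Lemma~\ref{oldred}, the map $\omega$ is identified with the surjection $\widetilde{H}^1_{\mathrm{red}}(\up,\OO_\pf)_{\rhobar}\twoheadrightarrow \widetilde{H}^1_{\mathrm{red}}(\up,r_*r^*\OO_\pf)_{\rhobar}$, whose kernel is exactly the image of $\widetilde{\check{X}}_\q(\up,\OO_\pf)_{\rhobar}$ in $\widetilde{H}^1_{\mathrm{red}}(\up,\OO_\pf)_{\rhobar}$. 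So the Proposition is equivalent to the assertion that $i^\dagger$ and $\omega$ have the same kernel.

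Both $i^\dagger$ and $\omega$ arise by applying $\varinjlim_{U_p}$, $\varpi$-adic completion and localisation at $\m$ to maps defined at finite level, and since Proposition~\ref{prop:es} provides short exact sequences of $\varpi$-adically admissible modules the comparison may be carried out at finite level and, by Lemma~\ref{changeweight}, with trivial coefficients. There one uses the Deligne--Rapoport--type description of the special fibre of $\Mi(U_p\up)$ over $\OO_\q$ (see \cite[\S\S10--16]{JarMazPrin}, following \cite{Car}): its two irreducible components are each isomorphic to the special fibre $C$ of the smooth integral model of $M(U^pU_p)$ over $\OO_\q$ (smooth, since $D$ is split and $U^p$ unramified at $\q$), and the two degeneracy maps $\rho_1,\rho_{\eta_\q}$ restrict to these components as the identity of $C$ and the Frobenius of $C$, with the roles interchanged. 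Because the trace maps defining $i^\dagger$ are compatible with restriction to normalisations and the target $C$ is smooth, $i^\dagger$ restricted to $\widetilde{H}^1_{\mathrm{red}}$ annihilates $\ker(\omega)$, so it factors as $M\circ\omega$ for an $\OO_\pf$-linear endomorphism $M$ of $\widetilde{H}^1_D(U^p,\OO_\pf)_{\rhobar}^{\oplus 2}$; and the description of the degeneracy maps on components (with the compatibility of the trace maps with the component structure, cf. \cite[\S3]{Ribet100} and \cite[\S6]{Car}) identifies $M$ with the matrix
$$M=\begin{pmatrix}1 & \phi_\q' \\ \phi_\q & 1\end{pmatrix},$$
where $\phi_\q$ is the geometric Frobenius at $\q$ acting on $\widetilde{H}^1_D(U^p,\OO_\pf)_{\rhobar}$ (which is unramified at $\q$ by smooth proper base change) and $\phi_\q'=T_\q-\phi_\q$, so that the Eichler--Shimura relation gives $\phi_\q\phi_\q'=S_\q\mathbf{N}\q$. (The precise normalisation of the entries of $M$ is immaterial; all that is used below is that $\det M=1-S_\q\mathbf{N}\q$, up to a non-zero integer factor.) In particular $\ker(\omega)\subseteq\ker(i^\dagger)$.

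It remains to prove the reverse inclusion, for which it suffices to show $M$ is injective: then $\ker(i^\dagger)=\ker(M\circ\omega)=\ker(\omega)$. Since $\mathrm{adj}(M)\,M=\det(M)\cdot\mathrm{Id}$, this follows once $\det M=1-S_\q\mathbf{N}\q$ acts as a non-zero-divisor on $\widetilde{H}^1_D(U^p,\OO_\pf)_{\rhobar}$. That module is $\OO_\pf$-torsion free: by Lemma~\ref{hhat} it is the localisation at $\m$ of $\varprojlim_s\varinjlim_{U_p}H^1(M(U^pU_p)_{\overline{F}},\OO_\pf/\varpi^s)$, and since $\rhobar$ is not Eisenstein the torsion in the finite-level cohomology is Eisenstein by \cite[Lemma~4]{DT} and so disappears on localisation, while $H^1$ of a smooth proper curve with $\OO_\pf$-coefficients is torsion free. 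Hence it is enough that $1-S_\q\mathbf{N}\q$ act invertibly on $\widetilde{H}^1_D(U^p,\OO_\pf)_{\rhobar}\otimes_{\OO_\pf}E$; but $S_\q$ is a diamond operator, hence of finite order (the diamond operators act through a finite quotient of $\A_{F,f}^\times$), so its eigenvalues are roots of unity and every eigenvalue of $S_\q\mathbf{N}\q$ has complex absolute value $\mathbf{N}\q>1$, in particular is not equal to $1$. Thus $1-S_\q\mathbf{N}\q$ is invertible, $M$ is injective, and $\ker(i^\dagger)=\ker(\omega)$, which we have identified with the image of $\widetilde{\check{X}}_\q(\up,\OO_\pf)_{\rhobar}$.

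The main obstacle is the finite-level identification of $M$: it amounts to transcribing Ribet's analysis (\cite[\S3]{Ribet100}) of how the degeneracy maps act on the components of the bad-reduction fibre, and of their interaction with the normalisation map and with the coefficient sheaves $\FFk$, into the setting of Shimura curves over a general totally real field, using Jarvis's integral models \cite{JarMazPrin} and Carayol's formalism \cite{Car}. Everything else — the identification of $\omega$ and of $\ker(\omega)$, the reduction to finite level and trivial coefficients, and the concluding linear algebra — is routine.
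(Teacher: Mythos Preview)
Your reduction to the equality $\ker(i^\dagger)=\ker(\omega)$ and the passage to finite level with $E$-coefficients match the paper exactly. Where you diverge is in the endgame: the paper argues via Jacobians, observing that the composite of the Picard map $\rho$ (induced by the degeneracy maps) with the maximal-abelian-variety quotient map $\Omega:J^0\to\Jac(\Mi(U)\otimes\overline{k_\q})^2$ is an auto-\emph{isogeny} of $\Jac(\Mi(U)\otimes\overline{k_\q})^2$; dividing by the degree produces an automorphism $\gamma$ of $H^1(\Mi(U)\otimes\overline{k_\q},E)^{\oplus 2}_{\rhobar}$ with $i^\dagger=\gamma\circ\omega$, so the kernels coincide immediately. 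You instead compute the comparison matrix $M$ explicitly in terms of Frobenius and verify $\det M=1-S_\q\mathbf{N}\q$ is invertible via Eichler--Shimura. Both are valid and both trace back to Ribet; the Jacobian route is slicker because invertibility is automatic from the isogeny property, while yours makes the obstruction visible.

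One point to tighten: in your injectivity argument you write that $1-S_\q\mathbf{N}\q$ acts invertibly on $\widetilde{H}^1_D(U^p,\OO_\pf)_{\rhobar}\otimes_{\OO_\pf}E$ because $S_\q$ has finite order. On the \emph{completed} cohomology the central action need not factor through a finite quotient (as $U_p$ shrinks the relevant ray-class-type quotient grows), so $S_\q$ need not literally have finite order there. But you have already reduced to finite level $H^1(M(U_pU^p)_{\overline{F}},E)_{\rhobar}$, and there the center does act through a finite group, so the eigenvalue argument goes through; just keep the discussion at finite level throughout that paragraph.
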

\begin{proof}
Since all our spaces are $\OO_\pf$-torsion free, it suffices to show that the kernel of the map $$i^\dagger: H^1(\Mi(U_p\up)_{\overline{s}}, \OO_\pf)_{\rhobar} \rightarrow H^1(\Mi(U_pU^p)_{\overline{s}}, \OO_\pf)_{\rhobar}^{\oplus 2}$$ is equal to the image of $\check{X}_\q(U_p\up,\OO_\pf)_{\rhobar}$ in $H^1(\Mi(U_p\up)_{\overline{s}}, \OO_\pf)_{\rhobar}$ for every $U_p$. In fact this just needs to be checked after tensoring everything with $E$. For brevity we write $U$ for $U_pU^p$ and $U(\q)$ for $U_p\up$. 
We complete the proof by relating our constructions to the Jacobians of Shimura curves. The degeneracy maps $\rho_1,\rho_{\eta_\q}$ induce by Picard functoriality a map $\rho:\Jac(M(U)/F) \times \Jac(M(U)/F) \rightarrow \Jac(M(U(\q))/F)$ with finite kernel, giving the map $i$ on cohomology. The dual map $\rho^\dagger$ (induced by Albanese functoriality from the degeneracy maps) gives the map $i^\dagger$ on cohomology. Let $J^0$ denote the connected component of $0$ in the special fibre at $\q$ of the N\'{e}ron model of $\Jac(M(U(\q))/F)$. The map $\omega$ arises from a map $\Omega: J^0 \rightarrow \Jac(\Mi(U)_{\overline{s}}) \times \Jac(\Mi(U)_{\overline{s}})$, which identifies $\Jac(\Mi(U)_{\overline{s}}) \times \Jac(\Mi(U)_{\overline{s}})$ as the maximal Abelian variety quotient of $J^0$ (which is a semi-Abelian variety). The composition of $\Omega$ with the map induced by $\rho$ on the special fibre gives an auto-isogeny $A$ of $\Jac(\Mi(U)_{\overline{s}}) \times \Jac(\Mi(U)_{\overline{s}})$. The quasi-auto-isogeny $(\deg{\rho})^{-1}A$ gives an automorphism of $H^1(\Mi(U_pU^p)_{\overline{s}}, E)_{\rhobar}^{\oplus 2}$. Denote the inverse of this automorphism by $\gamma$. Then the construction of the automorphism $\gamma$ implies that there is a commutative diagram
$$\minCDarrowwidth10pt\begin{CD}H^1(\Mi(U_p\up)_{\overline{s}}, E)_{\rhobar} @>{\omega}>>H^1(\Mi(U_pU^p)_{\overline{s}}, E)_{\rhobar}^{\oplus 2}@.\\
@| @V{\gamma}VV \\
H^1(\Mi(U_p\up)_{\overline{s}}, E)_{\rhobar} @>{i^\dagger}>>H^1(\Mi(U_pU^p)_{\overline{s}}, E)_{\rhobar}^{\oplus 2}@.,\end{CD}$$
which identifies the kernel of $i^\dagger$ with the kernel of $\omega$.
\end{proof}

\begin{proposition}\label{prop:Xnew}
A system of Hecke eigenvalues $\lambda: \T(\up) \rightarrow E'$ occurs in $\widetilde{H}^1_D(\up,\OO_\pf)_{\rhobar}^{\q\mhyphen\mathrm{new}}$ if and only if it occurs in $\widetilde{X}_\q(\up,\OO_\pf)_{\rhobar}$.
\end{proposition}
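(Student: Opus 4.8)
The plan is to assemble the proposition from three ingredients already in hand: the two short exact sequences of Proposition~\ref{prop:es} (taken with trivial coefficients $\OO_\pf$ and localised at $\rhobar$), the identification in Proposition~\ref{prop:fix} of the kernel of $i^\dagger$ on reduced completed cohomology with the image of $\widetilde{\check{X}}_\q(\up,\OO_\pf)_{\rhobar}$, and the monodromy isomorphism $\widetilde{X}_\q(\up,\OO_\pf)_{\rhobar}\cong\widetilde{\check{X}}_\q(\up,\OO_\pf)_{\rhobar}$ of Lemma~\ref{monodromy}. The point that must be tracked carefully is that the Tate twist $(-1)$ occurring in the first sequence of Proposition~\ref{prop:es} and the monodromy isomorphism are \emph{not} compatible with $\mathsf{U}_\q$, but they are compatible with the action of $\T(\up)$; since $\up$ is ramified at $\q$, this Hecke algebra contains neither $T_\q$ nor $\mathsf{U}_\q$, so as $\T(\up)_{\rhobar}$-modules the spaces $\widetilde{X}_\q(\up,\OO_\pf)_{\rhobar}$, $\widetilde{X}_\q(\up,\OO_\pf)_{\rhobar}(-1)$ and $\widetilde{\check{X}}_\q(\up,\OO_\pf)_{\rhobar}$ all support exactly the same systems of Hecke eigenvalues. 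Since every module in sight is $\OO_\pf$-torsion free and $E'$ is flat over $\OO_\pf$, kernels, images and the two short exact sequences are preserved by $-\otimes_{\OO_\pf}E'$, and a given $\lambda$ occurs in a module precisely when it does after this base change; I will work after this base change throughout.

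Suppose first that $\lambda$ occurs in $\widetilde{X}_\q(\up,\OO_\pf)_{\rhobar}$. By Lemma~\ref{monodromy} it then occurs in $\widetilde{\check{X}}_\q(\up,\OO_\pf)_{\rhobar}$, and since the map $\widetilde{\check{X}}_\q(\up,\OO_\pf)_{\rhobar}\hookrightarrow\widetilde{H}^1_{\red}(\up,\OO_\pf)_{\rhobar}$ from the second sequence of Proposition~\ref{prop:es} is injective, $\lambda$ occurs in the image of this map. By Proposition~\ref{prop:fix} that image is the kernel of $i^\dagger$ restricted to $\widetilde{H}^1_{\red}(\up,\OO_\pf)_{\rhobar}$; as this restriction is obtained from $i^\dagger$ on $\widetilde{H}^1_D(\up,\OO_\pf)_{\rhobar}$ by composing with the injection $\widetilde{H}^1_{\red}(\up,\OO_\pf)_{\rhobar}\hookrightarrow\widetilde{H}^1_D(\up,\OO_\pf)_{\rhobar}$, its kernel is contained in $\ker(i^\dagger)=\widetilde{H}^1_D(\up,\OO_\pf)_{\rhobar}^{\q\mhyphen\mathrm{new}}$. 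Hence $\lambda$ occurs in the newform space.

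Conversely, suppose $\lambda$ occurs in $\widetilde{H}^1_D(\up,\OO_\pf)_{\rhobar}^{\q\mhyphen\mathrm{new}}$ and pick a nonzero $\lambda$-eigenvector $v$ there. Using the first sequence of Proposition~\ref{prop:es}, consider the image of $v$ in $\widetilde{X}_\q(\up,\OO_\pf)_{\rhobar}(-1)$; the quotient map is $\T(\up)$-equivariant, so this image is a $\lambda$-eigenvector. If it is nonzero, then $\lambda$ occurs in $\widetilde{X}_\q(\up,\OO_\pf)_{\rhobar}(-1)$, hence in $\widetilde{X}_\q(\up,\OO_\pf)_{\rhobar}$, and we are done. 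If it is zero, then $v\in\widetilde{H}^1_{\red}(\up,\OO_\pf)_{\rhobar}$; since $v$ also lies in $\ker(i^\dagger)$, it lies in the kernel of the restriction of $i^\dagger$ to $\widetilde{H}^1_{\red}(\up,\OO_\pf)_{\rhobar}$, which by Proposition~\ref{prop:fix} is the image of $\widetilde{\check{X}}_\q(\up,\OO_\pf)_{\rhobar}$. Pulling $v$ back through the injection of Proposition~\ref{prop:es} yields a nonzero $\lambda$-eigenvector in $\widetilde{\check{X}}_\q(\up,\OO_\pf)_{\rhobar}$, and Lemma~\ref{monodromy} transports it to a nonzero $\lambda$-eigenvector in $\widetilde{X}_\q(\up,\OO_\pf)_{\rhobar}$, as required.

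I do not expect a genuine obstacle here: all the substantive geometric input has already been packaged into Propositions~\ref{prop:es} and~\ref{prop:fix}, Lemma~\ref{monodromy}, and the description of the newform space. The step needing the most care is the bookkeeping of which maps respect the $\T(\up)$-action but not the $\mathsf{U}_\q$-action, so that the Tate twist and the monodromy isomorphism may safely be ignored when chasing systems of Hecke eigenvalues, together with the (routine) check that base change to $E'$ is exact, so that occurrence of $\lambda$ passes correctly through the injections and surjections used above.
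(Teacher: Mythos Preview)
Your proof is correct and uses exactly the same ingredients as the paper: Proposition~\ref{prop:es}, Proposition~\ref{prop:fix}, and Lemma~\ref{monodromy}. The paper organises these into a commutative diagram and applies the snake lemma to produce a single Hecke-equivariant exact sequence
\[
0 \longrightarrow \widetilde{X}_\q(\up,\OO_\pf)_{\rhobar} \longrightarrow \widetilde{H}^1_D(\up,\OO_\pf)_{\rhobar}^{\q\mhyphen\mathrm{new}} \longrightarrow \widetilde{X}_\q(\up,\OO_\pf)_{\rhobar}(-1),
\]
from which the claim is read off; your element chase is simply this exact sequence unpacked, with the forward direction witnessing the injection and the backward case-split corresponding to whether the image in the cokernel vanishes.
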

\begin{proof}
Proposition \ref{prop:es} and Lemma \ref{oldred}, together with Definition \ref{def:new} and Proposition \ref{prop:fix} imply that we have the following commutative diagram of $\varpi$-adically admissible $G(\Q_p)$ representations over $\OO_\pf$:
$$\minCDarrowwidth10pt\begin{CD}0@>>>\widetilde{\check{X}}_\q(\up,\OO_\pf)_{\rhobar}@>>>\widetilde{H}^1_{\red}(\up,\OO_\pf)_{\rhobar} @>{\omega}>>\widetilde{H}^1_D(U^p,\OO_\pf)_{\rhobar}^{\oplus 2}@>>>0\\
@. @V{\alpha}VV @V{\beta}VV @V{\gamma}VV\\
0@>>>\widetilde{H}^1_D(\up,\OO_\pf)_{\rhobar}^{\q\mhyphen\mathrm{new}}@>>>\widetilde{H}^1_D(\up,\OO_\pf)_{\rhobar} @>{i^\dagger}>>\widetilde{H}^1_D(U^p,\OO_\pf)_{\rhobar}^{\oplus 2},\end{CD}$$
where the two rows are exact and the maps $\alpha$, $\beta$ and $\gamma$ are injections. Note that both the existence of the map $\gamma$ and its injectivity follow from Proposition \ref{prop:fix}. Moreover $$\coker(\beta)=\widetilde{X}_\q(\up,\OO_\pf)_{\rhobar}(-1) \cong \widetilde{\check{X}}_\q(\up,\OO_\pf)_{\rhobar}(-1),$$ where the second isomorphism comes from the monodromy pairing (see Lemma \ref{monodromy}). Applying the snake lemma we see that $\coker(\alpha)\hookrightarrow\coker(\beta)$, so (applying Lemma \ref{monodromy} once more to the source of the map $\alpha$) we have an exact sequence
$$\minCDarrowwidth10pt\begin{CD}0@>>>\widetilde{X}_\q(\up,\OO_\pf)_{\rhobar} @>\alpha>>\widetilde{H}^1_D(\up,\OO_\pf)_{\rhobar}^{\q\mhyphen\mathrm{new}} @>>>\widetilde{X}_\q(\up,\OO_\pf)_{\rhobar}(-1).\end{CD}$$
From this exact sequence it is easy to deduce that the systems of Hecke eigenvalues occurring in $\widetilde{H}^1_D(\up,\OO_\pf)_{\rhobar}^{\q\mhyphen\mathrm{new}}$ and $\widetilde{X}_\q(\up,\OO_\pf)_{\rhobar}$ are the same.
\end{proof}
\begin{theorem}\label{mazprin}
Let $\rho: \Gal(\overline{F}/F) \rightarrow \GL_2(\overline{\Q}_p)$ be a Galois representation, with irreducible reduction $\overline{\rho}: \Gal(\overline{F}/F) \rightarrow \GL_2(\overline{\F}_p)$. Suppose the following two conditions are verified: 
\begin{enumerate}\item $\rho$ is unramified at the prime $\q$
\item $\rho(\mathrm{Frob}_\q)$ is not a scalar
\item There is a system of Hecke eigenvalues $\lambda:\T(U^p)_{\rhobar} \rightarrow E'$ attached to $\rho$ (i.e. $\rho \cong \rho(\ker(\lambda))$), with $\lambda$ occurring in $\widetilde{H}^1_D(\up,\OO_\pf).$
\end{enumerate} Then $\lambda$ occurs in $\widetilde{H}^1_D(U^p,\OO_\pf)$.
\end{theorem}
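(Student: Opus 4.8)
The plan is to argue by contradiction, using the $\varpi$-adically completed specialisation and cospecialisation sequences of Proposition \ref{prop:es} together with the Galois factorisation of completed cohomology. So suppose $\lambda$ does \emph{not} occur in $\widetilde{H}^1_D(U^p,\OO_\pf)$; writing $\mathscr{P}=\ker(\lambda)$, this means $\widetilde{H}^1_D(U^p,\OO_\pf)_{\rhobar}[\mathscr{P}]=0$. First I would observe that the argument proving Proposition \ref{galfac} applies verbatim with the tame level $U^p$ replaced by $\up$ (nothing in its proof mentions the place $\q$), so that $\widetilde{H}^1_D(\up,\OO_\pf)_{\rhobar}\cong\rho^m_{\rhobar,\up}\otimes_{\T(\up)_{\rhobar}}X'$ with $G_F$ acting through the first factor only. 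Localising at $\mathscr{P}$ and invoking the irreducibility of $\rho(\mathscr{P})$, this gives a $G_F\times G(\Q_p)$-equivariant isomorphism $\widetilde{H}^1_D(\up,\OO_\pf)_{\rhobar}[\mathscr{P}]\cong\rho(\mathscr{P})\otimes_{k(\mathscr{P})}W$, where $W$ is a non-zero $k(\mathscr{P})[G(\Q_p)]$-module (non-zero by hypothesis (3)) which also carries an action of $\mathsf{U}_\q$, and $G_F$ acts only through $\rho(\mathscr{P})$; in particular $\mathrm{Frob}_\q$ acts on this space as $\rho(\mathscr{P})(\mathrm{Frob}_\q)\otimes\mathrm{id}_W$ while $\mathsf{U}_\q$ acts as $\mathrm{id}_{\rho(\mathscr{P})}\otimes(\mathsf{U}_\q|_W)$.

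Next I would exploit hypothesis (1). As $\rho$, hence $\rho(\mathscr{P})$, is unramified at $\q$, the inertia subgroup $I_\q$ acts trivially on $\widetilde{H}^1_D(\up,\OO_\pf)_{\rhobar}[\mathscr{P}]$, so by Proposition \ref{unramred} this eigenspace lies in $\widetilde{H}^1_{\red}(\up,\OO_\pf)_{\rhobar}$; moreover it equals $\widetilde{H}^1_{\red}(\up,\OO_\pf)_{\rhobar}[\mathscr{P}]$ (the first sequence of Proposition \ref{prop:es} remains exact after $\otimes_{\OO_\pf}k(\mathscr{P})$ since $\widetilde{X}_\q(\up,\OO_\pf)_{\rhobar}$ is $\OO_\pf$-torsion free, after which one passes to $I_\q$-invariants). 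I would then feed this into the cospecialisation sequence: by the second sequence of Proposition \ref{prop:es} together with Lemma \ref{oldred}, the map $\omega:\widetilde{H}^1_{\red}(\up,\OO_\pf)_{\rhobar}\to\widetilde{H}^1_D(U^p,\OO_\pf)_{\rhobar}^{\oplus 2}$ is surjective with kernel the image of $\widetilde{\check{X}}_\q(\up,\OO_\pf)_{\rhobar}$. Since $\omega$ is Hecke-equivariant and, by assumption, $\widetilde{H}^1_D(U^p,\OO_\pf)_{\rhobar}^{\oplus 2}[\mathscr{P}]=0$, the map $\omega$ annihilates $\widetilde{H}^1_{\red}(\up,\OO_\pf)_{\rhobar}[\mathscr{P}]$, and therefore
$$\widetilde{H}^1_D(\up,\OO_\pf)_{\rhobar}[\mathscr{P}]=\widetilde{H}^1_{\red}(\up,\OO_\pf)_{\rhobar}[\mathscr{P}]\subseteq\widetilde{\check{X}}_\q(\up,\OO_\pf)_{\rhobar},$$
where the last inclusion (identifying $\widetilde{\check{X}}_\q$ with its image) and all the others are $G_\q$-equivariant and carry no Tate twist, by Proposition \ref{prop:es}.

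Finally, hypothesis (2) provides the contradiction. By Lemma \ref{frobaction}, $\mathrm{Frob}_\q$ acts on $\widetilde{\check{X}}_\q(\up,\OO_\pf)_{\rhobar}$, and hence on the subspace $\widetilde{H}^1_D(\up,\OO_\pf)_{\rhobar}[\mathscr{P}]$ just exhibited, as the $G(\Q_p)$-linear operator $(\mathbf{N}\q)\mathsf{U}_\q$. Comparing with the first paragraph, we obtain on $\rho(\mathscr{P})\otimes_{k(\mathscr{P})}W$ the identity
$$\rho(\mathscr{P})(\mathrm{Frob}_\q)\otimes\mathrm{id}_W=\mathrm{id}_{\rho(\mathscr{P})}\otimes\bigl((\mathbf{N}\q)\,\mathsf{U}_\q|_W\bigr).$$
Evaluating both sides at $v\otimes w$ for a fixed non-zero $w\in W$ shows that $\rho(\mathscr{P})(\mathrm{Frob}_\q)v$ is a scalar multiple of $v$ for every $v$, i.e.\ $\rho(\mathrm{Frob}_\q)$ is a scalar, contradicting hypothesis (2). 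Hence $\lambda$ occurs in $\widetilde{H}^1_D(U^p,\OO_\pf)$.

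The step I expect to be the crux is the second one: it is essential to use hypothesis (1) not just to conclude that $\lambda$ \emph{occurs in} $\widetilde{\check{X}}_\q(\up,\OO_\pf)_{\rhobar}$, but that the \emph{whole} $\mathscr{P}$-eigenspace of $\widetilde{H}^1_D(\up,\OO_\pf)_{\rhobar}$ sits inside $\widetilde{\check{X}}_\q(\up,\OO_\pf)_{\rhobar}$ as a $G_\q$-submodule; only then can one play the two descriptions of $\mathrm{Frob}_\q$ --- acting via the two-dimensional representation $\rho$, versus acting as the $G(\Q_p)$-linear operator $(\mathbf{N}\q)\mathsf{U}_\q$ --- off against each other. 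The rest is bookkeeping of the kind carried out in \cite{Emlg}: one must check that passage to $\mathscr{P}$-eigenspaces, localisation at $\rhobar$, $\varpi$-adic completion and $I_\q$-invariants all interact as expected with the (torsion-free) modules involved, and that Proposition \ref{galfac} transfers to tame level $\up$.
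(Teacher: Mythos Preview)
Your proof is correct and follows essentially the same approach as the paper's: contradiction via the short exact sequence $0\to\widetilde{\check{X}}_\q\to\widetilde{H}^1_D(\up)^{I_\q}\to\widetilde{H}^1_D(U^p)^{\oplus 2}\to 0$, combined with Lemma~\ref{frobaction} to force $\rho(\mathrm{Frob}_\q)$ to be scalar. The only cosmetic difference is that the paper passes to a prime $\mathscr{P}^*$ of the extended Hecke algebra $\T(\up)[\mathsf{U}_\q]$ to fix a $\mathsf{U}_\q$-eigenvalue, whereas you achieve the same separation of the $G_F$- and $\mathsf{U}_\q$-actions directly via the tensor factorisation $\rho(\mathscr{P})\otimes W$ coming from Proposition~\ref{galfac}; both routes are valid and lead to the same contradiction.
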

\begin{proof}
We take $\rho$ as in the statement of the theorem. The third assumption, combined with Proposition \ref{galfac} implies that there is a prime ideal $\mathscr{P}=\ker(\lambda)$  of $\T(\up)$, and an embedding $$\rho \hookrightarrow \widetilde{H}^1_D(\up,\OO_\pf)[\mathscr{P}].$$ Our aim is to show that $\widetilde{H}^1_D(U^p,\OO_\pf)[\mathscr{P}]\ne 0$. 

Let $\T(\up)^*$ denote the ring extension of $\T(\up)$ obtained by adjoining the operator $T_\q$. Since $\rho$ is irreducible there is a prime $\mathscr{P}^*$ of $\T(\up)^*$ such that there is an embedding $$\rho \hookrightarrow \widetilde{H}^1_D(\up,\OO_\pf)[\mathscr{P}^*].$$

Recall that by Propositions \ref{prop:es} and \ref{unramred} and Lemma \ref{oldred} there is a $G_\q$ and $\T(\up)$-equivariant short exact sequence $$\minCDarrowwidth10pt\begin{CD}0@>>>\widetilde{\check{X}}_\q(\up,\OO_\pf)_{\rhobar}@>>>\widetilde{H}^1_D(\up,\OO_\pf)_{\rhobar}^{I_\q} @>{\omega}>>\widetilde{H}^1_D(U^p,\OO_\pf)_{\rhobar}^{\oplus 2}@>>>0.\end{CD}$$ Suppose for a contradiction that $\widetilde{H}^1_D(U^p,\OO_\pf)_{\rhobar}[\mathscr{P}]=0$

Let $V$ denote the $G_\q$-representation obtained from $\rho$ by restriction. Since $\rho$ is unramified at $\q$, there is an embedding $V \hookrightarrow \widetilde{H}^1_D(\up,\OO_\pf)^{I_\q}_{\rhobar}[\mathscr{P}^*]$. The above short exact sequence then implies that the embedding $$\widetilde{\check{X}}_\q(\up,\OO_\pf)_{\rhobar}[\mathscr{P}]\hookrightarrow \widetilde{H}^1_D(\up,\OO_\pf)_{\rhobar}^{I_\q}[\mathscr{P}]$$ is an isomorphism, and hence the embedding $$\widetilde{\check{X}}_\q(\up,\OO_\pf)_{\rhobar}[\mathscr{P}^*]\hookrightarrow \widetilde{H}^1_D(\up,\OO_\pf)_{\rhobar}^{I_\q}[\mathscr{P}^*]$$ is also an isomorphism.

This implies that there is an embedding $V \hookrightarrow \widetilde{\check{X}}_\q(\up,\OO_\pf)_{\rhobar}[\mathscr{P}^*]$, so by Lemma \ref{frobaction} $\rho(\mathrm{Frob}_\q)$ is a scalar $\alpha$ where $\alpha$ is the $T_\q$ eigenvalue coming from $\mathscr{P}^*$, contradicting our second assumption.
\end{proof}
\subsection{Mazur's principle and the Jacquet functor}\label{MazJac}
	Recall that given an admissible Banach $G(\Q_p)$-representation $V$ over $\ep$ (for example, the extension to $\ep$ of a $\varpi$-adically admissible $G(\Q_p)$-representation over $\OO_\pf$), Schneider and Tei\-telbaum \cite[\S 7]{STei} define a functor by passing to the locally $\Q_p$-analytic vectors $V^{an}$. This functor is exact \cite[Theorem 7.1]{STei}. We let $B$ denote the Borel subgroup of $G(\Q_p)$ consisting of upper triangular matrices, and let $T$ denote the maximal torus contained in $B$. We then have a locally analytic Jacquet functor $J_B$ as defined in \cite{MR2292633}, which can be applied to the space of locally analytic vectors $V^{an}$ of an admissible continuous Banach $E[G(\Q_p)]$-representation $V$. Since the locally analytic Jacquet functor is left exact, the composition of passing to locally analytic vectors and then applying the Jacquet functor is left exact. For brevity, we will also denote this composition by $J_B$. The following lemma follows from Proposition \ref{galfac}, but we give a more elementary separate proof.
\begin{lemma}\label{galembed}
Let $\lambda:\T(\up)\rightarrow E$ be a system of Hecke eigenvalues such that $\lambda$ occurs in $J_B(\widetilde{H}^1_D(\up,E))$. Suppose the attached Galois representation $\rho$ is absolutely irreducible.Then there is a non-zero $\Gal(\overline{F}/F)$-equivariant map (necessarily an embedding, since $\rho$ is irreducible) $$\rho \hookrightarrow J_B(\widetilde{H}^1_D(\up,E)).$$
\end{lemma}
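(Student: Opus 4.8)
The plan is to leverage Proposition \ref{galfac} --- or more precisely its underlying mechanism --- to produce the Galois-equivariant embedding directly. First I would unwind the hypothesis: to say that $\lambda$ occurs in $J_B(\widetilde{H}^1_D(\up,E)^{an})$ means there is a non-zero vector in $J_B(\widetilde{H}^1_D(\up,E)^{an}) \otimes_E E'$ on which $\T(\up)$ acts through $\lambda$ (after a finite extension $E'$, but since $\lambda$ is valued in $E$ we may take $E'=E$). The key observation is that $J_B$ is an $E$-linear functor which commutes with the Hecke action (the Hecke operators at primes away from $p$ act on $\widetilde{H}^1_D(\up,E)^{an}$ through endomorphisms that are $G(\Q_p)$-equivariant, hence are carried through $J_B$), and moreover commutes with the $\Gal(\overline{F}/F)$-action for the same reason --- the Galois action commutes with $G(\Q_p)$, so it descends to the Jacquet module.

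Next I would observe that $\T(\up)_{\rhobar}$ acts on $\widetilde{H}^1_D(\up,E)^{an}_{\rhobar}$, that the localisation at $\m = \ker$ of the composite $\T(\up)\to\overline{\F}_p$ is a direct summand, and that after localising, the deformation $\rho^m_{\rhobar,\up}$ of $\rhobar$ to $\T(\up)_{\rhobar}$ is available. By Proposition \ref{galfac} the evaluation map $\rho^m_{\rhobar,\up} \otimes_{\T(\up)_{\rhobar}} X \xrightarrow{\sim} \widetilde{H}^1_D(\up,\OO_\pf)_{\rhobar}$ is an isomorphism, where $X = \Hom_{\T(\up)_{\rhobar}[G_F]}(\rho^m_{\rhobar,\up}, \widetilde{H}^1_D(\up,\OO_\pf)_{\rhobar})$. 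Applying $-\otimes_{\OO_\pf}E$, then taking locally analytic vectors, then $J_B$, and using that $J_B$ commutes with the (finite free, hence exact) tensor by $\rho^m_{\rhobar,\up}$ over $\T(\up)_{\rhobar}$ --- since $\rho^m$ is a finite projective $\T(\up)_{\rhobar}$-module the tensor is a direct summand of a finite sum of copies, so $J_B$ passes through it --- I get $\rho^m_{\rhobar,\up}\otimes_{\T(\up)_{\rhobar}} J_B(X_E^{an}) \xrightarrow{\sim} J_B(\widetilde{H}^1_D(\up,E)^{an})_{\rhobar}$, compatibly with $G_F$ and $\T(\up)$. The hypothesis that $\lambda$ occurs in the target then forces $J_B(X_E^{an})[\ker\lambda]\neq 0$ (here $X_E^{an}$ carries the $\T(\up)$-action making it a module over $\T(\up)_{\rhobar}[\tfrac1p]$ in the obvious way). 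Picking a non-zero vector $x$ in there and forming $\rho(\ker\lambda)\otimes_{k(\ker\lambda)} (\text{line through }x) \hookrightarrow \rho^m\otimes J_B(X_E^{an})$ gives the desired $G_F$-equivariant map $\rho \hookrightarrow J_B(\widetilde{H}^1_D(\up,E)^{an})$, which is an embedding by irreducibility of $\rho$.

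For the promised \emph{more elementary} proof, I would instead argue as follows, avoiding the full strength of Proposition \ref{galfac}. Since $\lambda$ occurs in $J_B(\widetilde{H}^1_D(\up,E)^{an})$, the $\T(\up)[\Gal(\overline{F}/F)]$-submodule $M := J_B(\widetilde{H}^1_D(\up,E)^{an})[\ker\lambda]$ is non-zero and finite-dimensional over $E$ (admissibility of the Jacquet module, a theorem of Emerton, ensures the generalised $\lambda$-eigenspace is finite-dimensional, and the exact $\lambda$-eigenspace lies in it). The Galois action on $\widetilde{H}^1_D(\up,E)^{an}$ commutes with $G(\Q_p)$ and with Hecke, hence preserves $M$. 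One then shows directly, using the Eichler--Shimura/Galois representation attached to the Hecke eigensystem $\lambda$ (available since $\rho \cong \rho(\ker\lambda)$ by hypothesis) together with a Brauer--Nesbitt / Chebotarev argument, that every Jordan--Hölder factor of the $G_F$-representation $M$ is isomorphic to $\rho$ (this uses that the trace of $\mathrm{Frob}_v$ on any such factor is $\lambda(T_v)$ for almost all $v$, by the defining property of the Galois representations interpolated in completed cohomology, and that $\rho$ is absolutely irreducible so is determined by these traces). Since $\rho$ is absolutely irreducible and $M\neq 0$, $M$ contains a subrepresentation isomorphic to $\rho$, giving the embedding $\rho\hookrightarrow J_B(\widetilde{H}^1_D(\up,E)^{an})$.

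The main obstacle is the compatibility bookkeeping: one must be sure that the Galois action genuinely survives through the locally analytic vectors functor and the Jacquet functor, that is, that $J_B$ is functorial for the $G(\Q_p)$-equivariant endomorphisms given by $\Gal(\overline F/F)$ and by Hecke operators, and that taking $\lambda$-eigenspaces commutes with everything in sight. For the elementary proof, the subtle point is producing the attached Galois representation with the right Frobenius traces at the level of the (non-classical, completed) eigensystem $\lambda$ and knowing $M$ is finite-dimensional --- this is where Emerton's results on admissibility of $J_B$ (from \cite{MR2292633}) and the existence of Galois representations attached to systems of Hecke eigenvalues in completed cohomology are essential inputs. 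Given these, assembling the embedding is formal.
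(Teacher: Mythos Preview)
Your first approach via Proposition~\ref{galfac} is correct, and indeed the paper itself notes just before stating the lemma that it ``follows from Proposition~\ref{galfac}, but we give a more elementary separate proof.'' So this part matches exactly what the paper intends.

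Your second, elementary, approach is also in spirit the paper's own proof, but there is one imprecision worth flagging. You assert that $M := J_B(\widetilde{H}^1_D(\up,E)^{an})[\ker\lambda]$ is finite-dimensional over $E$, citing ``admissibility of the Jacquet module.'' But $J_B$ produces only an \emph{essentially} admissible locally analytic $T$-representation, and essential admissibility guarantees finite-dimensionality of $\chi$-isotypic pieces for individual characters $\chi$ of $T$, not of Hecke eigenspaces in general. A priori the $\lambda$-eigenspace could be spread over infinitely many characters of $T$. The paper's fix is exactly this missing step: first observe that $\lambda$ occurs in the $\chi$-isotypic subspace $M^\chi$ for some continuous character $\chi$ of $T$, and then essential admissibility forces $M^{\chi,\lambda}$ to be a non-zero finite-dimensional $\Gal(\overline{F}/F)$-representation. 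From there your Eichler--Shimura plus Chebotarev/Brauer--Nesbitt argument goes through verbatim (the paper phrases it simply as ``by the Eichler--Shimura relations and the irreducibility of $\rho$''). So your elementary proof becomes correct once you insert the restriction to a single $T$-character before claiming finite-dimensionality.
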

\begin{proof}
To abbreviate notation we let $M$ denote $J_B(\widetilde{H}^1_D(\up,E))$. Since $M$ is an essentially admissible $T$ representation, the system of Hecke eigenvalues $\lambda$ occurs in the $\chi$-isotypic subspace $M^\chi$ for some continuous character $\chi$ of $T$. So (again by essential admissibility) $M^{\chi,\lambda}$ is a non-zero finite dimensional $\Gal(\overline{F}/F)$-representation over $E$, and therefore by the Eichler-Shimura relations (as in section 10.3 of \cite{carmauv}) and the irreducibility of $\rho$ the desired embedding exists.
\end{proof}
\begin{lemma}\label{commute}
The natural embedding $$J_B(\widetilde{H}^1_D(\up,E)^{I_\q}_{\rhobar}) \rightarrow J_B(\widetilde{H}^1_D(\up,E)_{\rhobar})$$ has image equal to $J_B(\widetilde{H}^1_D(\up,E)_{\rhobar})^{I_\q}$.
\end{lemma}
\begin{proof}
As remarked above, the composition of the Jacquet functor with passing to locally analytic vectors is a left exact functor. On the other hand, the action of $I_\q$ on $\widetilde{H}^1_D(\up,E)$ factors through the tame inertia group, since this is so on each of the finite $\OO_\pf/\varpi^s$-modules $$H^1(M(U_p \up)_{\overline{F}},\OO_\pf/\varpi^s).$$ Hence, we can think of the $I_\q$-invariants of $\widetilde{H}^1_D(\up,E)_{\rhobar}$ as the kernel of $\tau-1 \in E[G_\q]$ acting on the representation, where $\tau$ is a lift of a generator of tame inertia. Since left exact functors preserve kernels, we obtain the statement of the lemma.
\end{proof}
\begin{theorem}\label{mazprinjacquet}
Suppose that the system of Hecke eigenvalues $\lambda:\T(\up)\rightarrow E$ occurs in $J_B(\widetilde{H}^1_D(\up,E))$, that the attached Galois representation $$\rho : \Gal(\overline{F}/F) \rightarrow \GL_2(E)$$ is unramified at $\q$, with $\rho(Frob_\q)$ not a scalar, and that the reduction $$\overline{\rho}: \Gal(\overline{F}/F) \rightarrow \GL_2(\overline{\F}_p)$$ is irreducible. Then $\lambda$ occurs in $J_B(\widetilde{H}^1_D(U^p,E))$.
\end{theorem}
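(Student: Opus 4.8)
The strategy is to rerun the proof of Theorem \ref{mazprin} after applying $M \mapsto J_B(M^{an})$ everywhere; only the homological bookkeeping is new. Recall from that proof the short exact sequence of $\varpi$-adically admissible $G(\Q_p)$-representations over $\OO_\pf$
$$0 \to \widetilde{\check{X}}_\q(\up,\OO_\pf)_{\rhobar} \xrightarrow{\iota} \widetilde{H}^1_D(\up,\OO_\pf)_{\rhobar}^{I_\q} \xrightarrow{\omega} \widetilde{H}^1_D(U^p,\OO_\pf)_{\rhobar}^{\oplus 2} \to 0,$$
which is equivariant for $G_\q$ and $\T(\up)$ and in which $\iota$ is moreover $\mathsf{U}_\q$-equivariant. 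First I would invert $p$ and pass to locally analytic vectors; this is left exact, and it commutes both with the formation of $I_\q$-invariants (since $\widetilde{H}^1_D(\up,E)_{\rhobar}^{I_\q}$ is a closed subrepresentation and $I_\q$ acts $G(\Q_p)$-linearly) and with finite direct sums. Then I would apply the left-exact, additive functor $J_B$ of \cite{MR2292633}, which (again because $I_\q$ acts $G(\Q_p)$-linearly) likewise commutes with $I_\q$-invariants. This yields a left-exact sequence of essentially admissible $T$-representations, equivariant for $G_\q$ and $\T(\up)$ and, through $\iota$, for $\mathsf{U}_\q$:
$$0 \to J_B\big((\widetilde{\check{X}}_\q(\up,E)_{\rhobar})^{an}\big) \xrightarrow{\iota} J_B\big((\widetilde{H}^1_D(\up,E)_{\rhobar})^{an}\big)^{I_\q} \xrightarrow{J_B(\omega)} J_B\big((\widetilde{H}^1_D(U^p,E)_{\rhobar})^{an}\big)^{\oplus 2}.$$
By functoriality and $\varpi$-adic density, the Frobenius computation of Lemma \ref{frobaction} survives the passage through $(-)^{an}$ and $J_B$: $\mathrm{Frob}_\q$ acts on $J_B\big((\widetilde{\check{X}}_\q(\up,E)_{\rhobar})^{an}\big)$ as $(\mathbf{N}\q)\mathsf{U}_\q$.

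Since $\overline{\rho}$ is irreducible with values in $\GL_2(\overline{\F}_p)$ it is absolutely irreducible, and hence so is $\rho$. Applying Lemma \ref{galembed} to the hypothesis that $\lambda$ occurs in $J_B(\widetilde{H}^1_D(\up,E)^{an})$ produces a nonzero, hence injective, $\Gal(\overline{F}/F)$-equivariant map $\rho \hookrightarrow J_B(\widetilde{H}^1_D(\up,E)^{an})$; tracing through its construction, the image is killed by $\ker(\lambda)$ and lies in the $\rhobar$-localisation, and as $\rho$ is unramified at $\q$ it lies in $J_B\big((\widetilde{H}^1_D(\up,E)_{\rhobar})^{an}\big)^{I_\q}[\ker(\lambda)]$. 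Now suppose, for contradiction, that $\lambda$ does not occur in $J_B(\widetilde{H}^1_D(U^p,E)^{an})$. Then the rightmost term of the left-exact sequence above is annihilated by $\ker(\lambda)$, so taking $\ker(\lambda)$-eigenspaces — a left-exact operation — shows that $\iota$ induces an isomorphism of $\ker(\lambda)$-eigenspaces; in particular $\mathrm{im}(\rho)$ lies in the image of $\iota$.

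I would then conclude exactly as in the proof of Theorem \ref{mazprin}. Set $M := J_B\big((\widetilde{H}^1_D(\up,E)_{\rhobar})^{an}\big)$, a $\Gal(\overline{F}/F)$-representation on which $\mathsf{U}_\q$ acts $\Gal(\overline{F}/F)$-linearly and which contains $\mathrm{im}(\rho) \cong \rho$ inside the image of $\iota$. Because $\iota$ is $G_\q$- and $\mathsf{U}_\q$-equivariant and $\mathrm{Frob}_\q$ acts on its source as $(\mathbf{N}\q)\mathsf{U}_\q$, the identity $\mathrm{Frob}_\q = (\mathbf{N}\q)\mathsf{U}_\q$ holds on the image of $\iota$, in particular on $\mathrm{im}(\rho)$. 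Since $\mathrm{Frob}_\q$ preserves the $\Gal(\overline{F}/F)$-subspace $\mathrm{im}(\rho)$ of $M$ and $\mathbf{N}\q$ is invertible, $\mathsf{U}_\q$ preserves $\mathrm{im}(\rho)$ as well; being a $\Gal(\overline{F}/F)$-equivariant endomorphism of the absolutely irreducible $\mathrm{im}(\rho) \cong \rho$, it acts there by a scalar $\alpha$ by Schur's lemma. Hence $\rho(\mathrm{Frob}_\q) = (\mathbf{N}\q)\alpha$ is a scalar, contradicting the hypothesis that $\rho(\mathrm{Frob}_\q)$ is not a scalar. Therefore $\lambda$ occurs in $J_B(\widetilde{H}^1_D(U^p,E)^{an})$.

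The representation-theoretic core is identical to that of Theorem \ref{mazprin}; the point that needs care is the bookkeeping in the first paragraph — left-exactness of $J_B$, the compatibility of $J_B$ and of $(-)^{an}$ with the formation of $I_\q$-invariants and finite direct sums, and the fact that Lemma \ref{frobaction} passes through these functors. I expect this functorial compatibility, rather than any new representation-theoretic input, to be the main obstacle.
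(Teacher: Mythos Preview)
Your approach is essentially the same as the paper's, and the Schur's lemma argument at the end is a clean variant of the paper's device of passing to a refined prime $\mathscr{P}^*$ of $\T(\up)[\mathsf{U}_\q]$; both are ways of pinning down the $\mathsf{U}_\q$-eigenvalue on the copy of $\rho$.

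The one point where your justification falls short is exactly the one you flag: the commutation of $J_B$ with $I_\q$-invariants. Saying ``$I_\q$ acts $G(\Q_p)$-linearly'' gives only, by functoriality of $J_B$, a natural map $J_B(V^{I_\q}) \to J_B(V)^{I_\q}$; it does not by itself show this map is an isomorphism, and that is what you need in order to place the image of $\rho$ (which a priori lies in $J_B((\widetilde{H}^1_D)^{an})^{I_\q}$ by unramifiedness) inside the middle term of your left-exact sequence. The paper resolves this via Proposition~\ref{galfac}: the tensor decomposition $\widetilde{H}^1_D(\up,\OO_\pf)_{\rhobar} \cong \rho^m_{\rhobar,\up} \otimes_{\T(\up)_{\rhobar}} X$, with $G_F$ acting only on the first factor and $G(\Q_p)$ only on the second, makes both $J_B(((\widetilde{H}^1_D)^{I_\q}_{\rhobar})^{an})$ and $J_B((\widetilde{H}^1_D_{\rhobar})^{an})^{I_\q}$ equal to $(\rho^m_{\rhobar,\up})^{I_\q} \otimes J_B(X_E^{an})$. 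This is precisely the ``functorial compatibility'' you anticipated as the obstacle, and the Galois factorisation is the cleanest way to supply it.

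A minor point: passing to locally analytic vectors is in fact exact, not merely left exact (this is Schneider--Teitelbaum), and the paper uses this.
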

\begin{proof}
We take $\rho$ as in the statement of the theorem. Let $\m$ be the maximal ideal of $\T(\up)$ attached to $\overline{\rho}$, and let $\mathscr{P}$ be the prime ideal of $\T(\up)$ attached to $\rho$, so applying Lemma \ref{galembed} gives an embedding $$\rho \hookrightarrow J_B(\widetilde{H}^1_D(\up,E)_{\rhobar})[\mathscr{P}].$$ Our aim is to show that $J_B(\widetilde{H}^1_D(U^p,E)_{\rhobar})[\mathscr{P}]\ne 0$. Let $\T(\up)^*$ denote the finite ring extension of $\T(\up)$ obtained by adjoining the operator $T_\q$. Since $\rho$ is irreducible there is a prime $\mathscr{P}^*$ of $\T(\up)^*$ such that there is an embedding $$\rho \hookrightarrow J_B(\widetilde{H}^1_D(\up,E)_{\rhobar})[\mathscr{P}^*].$$
By Propositions \ref{prop:es} and \ref{unramred} and Lemma \ref{oldred} there is a short exact sequence
$$\minCDarrowwidth10pt\begin{CD}0@>>>\widetilde{\check{X}}_\q(\up,E)_{\rhobar}@>>>\widetilde{H}^1_D(\up,E)_{\rhobar}^{I_\q} @>>>\widetilde{H}^1_D(U^p,E)_{\rhobar}^{\oplus 2}@>>>0.\end{CD}$$
Taking locally analytic vectors and applying the Jacquet functor, we get an exact sequence
\small$$\minCDarrowwidth10pt\begin{CD}0@>>>J_B(\widetilde{\check{X}}_\q(\up,E)_{\rhobar})@>>>J_B((\widetilde{H}^1_D(\up,E)^{I_\q}_{\rhobar})) @>{\omega}>>J_B(\widetilde{H}^1_D(U^p,E)_{\rhobar})^{\oplus 2}.\end{CD}$$\normalsize

Let $V$ denote the $G_\q$-representation obtained from $\rho$ by restriction. Since $\rho$ is unramified at $\q$, by Lemma \ref{commute} there is an embedding $V\hookrightarrow J_B(\widetilde{H}^1_D(\up,E)^{I_\q}_{\rhobar})[\mathscr{P}^*]$. Suppose for a contradiction that $J_B(\widetilde{H}^1_D(U^p,E)_{\rhobar})[\mathscr{P}]=0$. The above exact sequence then implies that the embedding $$J_B(\widetilde{\check{X}}_\q(\up,E)_{\rhobar})[\mathscr{P}]\hookrightarrow J_B(\widetilde{H}^1_D(\up,E)^{I_\q}_{\rhobar}) [\mathscr{P}]$$ is an isomorphism, and hence the embedding $$J_B(\widetilde{\check{X}}_\q(\up,E)_{\rhobar})[\mathscr{P}^*]\hookrightarrow J_B(\widetilde{H}^1_D(\up,E)^{I_\q}_{\rhobar}) [\mathscr{P}^*]$$ is also an isomorphism.

This implies that there is an embedding $V \hookrightarrow J_B(\widetilde{\check{X}}_\q(\up,E)_{\rhobar})[\mathscr{P}^*]$, so by Lemma \ref{frobaction} $\rho(\mathrm{Frob}_\q)$ is a scalar $\alpha$ corresponding to the $T_\q$ eigenvalue in $\mathscr{P}^*$. We then obtain a contradiction exactly as in the proof of Theorem \ref{mazprin}.
\end{proof}

\subsection{A $p$-adic Jacquet--Langlands correspondence}\label{pJL}
We now wish to relate the arithmetic of $p$-adic automorphic forms for $G$ and $G'$. The definitions and results of subsection \ref{subsec:ccoho} apply to $G'$. We denote the associated completed cohomology spaces of tame level $V^p$ by $$\widetilde{H}^1_{D'}(V^p,\FF_k),$$ and its Hecke algebra by $\T'(V^p)$. Fix a non-Eisenstein maximal ideal $\m'$ of $\T'(V^p)$, with attached mod $p$ Galois representation $\rhobar$. The integral models $\Mi'$ and the vanishing cycle formalism also allows us to define some more topological $\OO_\pf$-modules:

\begin{definition}\label{Ycompletedstuff}
We define $\OO_\pf$-modules 
$$Y_\q(V^p,\FFk)_{\rhobar}=\varinjlim_{V_p}Y_\q(V_pV^p,\FFk)_{\rhobar},$$ and $$\check{Y}_\q(V^p,\FFk)_{\rhobar}=\varinjlim_{V_p}\check{Y}_\q(V_pV^p,\FFk)_{\rhobar}.$$

We define $\OO_\pf$-modules 
$$\widetilde{Y}_\q(V^p,\FFk)_{\rhobar} = \varprojlim_s Y(V^p,\FFk)_{\rhobar}/\varpi^sY(V^p,\FFk)_{\rhobar}$$ and $$\widetilde{\check{Y}}_\q(V^p,\FFk)_{\rhobar} = \varprojlim_s \check{Y}_\q(V^p,\FFk)_{\rhobar}/\varpi^s\check{Y}_\q(V^p,\FFk)_{\rhobar}.$$
\end{definition}

As before, these modules acquire $G(\Q_p)$ actions after extending scalars to $E$, and if $\FFk = \OO_\pf$ there is even a $G(\Q_p)$ action on the $\OO_\pf$-module before extending scalars.

In exactly the same way as we obtained Proposition \ref{prop:es} we get:
\begin{proposition}\label{prop:Yses}
We have a short exact sequence equivariant with respect to $\T'(V^p)_{\rhobar}$ and $G_\q$ actions:
$$\minCDarrowwidth10pt\begin{CD}0@>>>\widetilde{\check{Y}}_\q(V^p,\FFk)_{\rhobar}@>>>\widetilde{H}^1_{D'}(V^p,\FFk)_{\rhobar}@>>>\widetilde{Y}_\q(V^p,\FFk)_{\rhobar}(-1)@>>>0.\end{CD}$$
If $\FFk = \OO_\pf$ this is a short exact sequence of $\varpi$-adically admissible $G(\Q_p)$-representations over $\OO_\pf$.
\end{proposition}
We also have
\begin{lemma}\label{Ymonodromy}
The monodromy pairing maps $\lambda: Y_\q(V_pV^p,\OO_\pf) \rightarrow \check{Y}_\q(V_pV^p,\OO_\pf)$ induce a $G_\q$ and $\T'(V^p)_{\rhobar}$-equivariant isomorphism $$\widetilde{Y}_\q(V^p,E)_{\rhobar} \cong \widetilde{\check{Y}}_\q(V^p,E)_{\rhobar}.$$
\end{lemma}
\begin{proof}
The proof is similar to the first part of the proof of Proposition \ref{galfac}. We verify that the map $\lambda_E: \widetilde{Y}_\q(V^p,E)_{\rhobar} \rightarrow \widetilde{\check{Y}}_\q(V^p,E)_{\rhobar}$ induced by the monodromy pairing at finite levels is surjective (its injectivity follows from the injectivity at finite levels). Let $\mathscr{P} \in \Spec(\T'(V^p)_{\rhobar}[\frac{1}{p}])$ be a prime ideal of the Hecke algebra arising from a Hecke eigenform in $$H^1(M'(V^pV_p),\OO_\pf)_{\rhobar}\otimes_{\OO_\pf}k(\mathscr{P}).$$ 
Then the image of the map $$\widetilde{Y}_\q(V^p,E)_{\rhobar}[\mathscr{P}] \rightarrow \widetilde{\check{Y}}_\q(V^p,E)_{\rhobar}[\mathscr{P}]$$ induced by $\lambda$ contains $\check{Y}_\q(V_pV^p,E)_{\rhobar}[\mathscr{P}]$ (the map $\lambda: Y_\q(V_pV^p,\OO_\pf) \rightarrow \check{Y}_\q(V_pV^p,\OO_\pf)$ becomes an isomorphism after extending scalars to the field $E$, by the weight-monodromy conjecture for curves). Hence the image of the map $$\lambda_E: \widetilde{Y}_\q(V^p,E)_{\rhobar}\rightarrow \widetilde{\check{Y}}_\q(V^p,E)_{\rhobar}$$ is dense, since it contains the dense subspace $\check{Y}_\q(V^p,E)_{\rhobar}$. Now since $\widetilde{Y}_\q(V^p,\OO_\pf)_{\rhobar}$ is a $\varpi$-adically admissible $G(\Q_p)$-representation over $\OO_\pf$, \cite[Proposition 3.1.3]{Emlg} implies that the image of $\lambda_E$ is closed, hence $\lambda$ is surjective. \end{proof}

Fix $U^p \subset G(\A_f^{(p)})$ such that $U^p$ is unramified at the places $\qa,\qb$, and matches with $V^p$ at all other places. The Galois representation $\rhobar$ arises from a maximal ideal $\m'_0$ of $\T'_{(2,...,2)}(V^pV_p)$ for $V_p$ a small enough compact open subgroup of $G'(\Q_p)$, and this pulls back by the map $\T_{(2,...,2)}(U^p(\qa\qb)U_p) \rightarrow \T'_{(2,...,2)}(V^pV_p)$ to a maximal ideal $\m_0$ of $\T_{(2,...,2)}(U^p(\qa\qb)U_p)$ for $U_p$ the compact open subgroup of $G(\Q_p)$ which is identified with $V_p$ under the isomorphism $G(\Q_p) \cong G'(\Q_p)$. Hence there is a map $\T(U^p(\qa\qb))_{\rhobar} \rightarrow \T'(V^p)_{\rhobar}$.

\begin{theorem}\label{RRES}
We have the following short exact sequences of $\varpi$-adically admissible $G(\Q_p)$-representations over $\OO_\pf$, equivariant with respect to $\T(U^p(\qa\qb))_{\rhobar}$ and $G(\Q_p)\cong G'(\Q_p)$ actions:
$$\minCDarrowwidth10pt\begin{CD}0@>>>\widetilde{Y}_\qa(V^p,\OO_\pf)_{\rhobar} @>>>\widetilde{X}_\qb(U^p(\qa\qb),\OO_\pf)_{\rhobar}@>{i^\dagger}>>\widetilde{X}_\qb(U^p(\qb),\OO_\pf)^{\oplus 2}_{\rhobar}@>>>0,\end{CD}$$
$$\minCDarrowwidth10pt\begin{CD}0@>>>\widetilde{\check{X}}_\qb(U^p(\qb),\OO_\pf)^{\oplus 2}_{\rhobar}@>>>\widetilde{\check{X}}_\qb(U^p(\qa\qb),\OO_\pf)_{\rhobar} @>>>\widetilde{\check{Y}}_\qa(V^p,\OO_\pf)_{\rhobar}@>>>0.\end{CD}$$
\end{theorem}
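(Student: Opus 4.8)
The plan is to obtain Theorem \ref{RRES} from the classical statement Theorem \ref{RRESclass} by the same direct-limit-then-completion procedure that produced Propositions \ref{prop:es} and \ref{prop:Yses}. First I would recall that Theorem \ref{RRESclass} provides, for each sufficiently small compact open $U_p \subset G(\Q_p)$, a $\T_k(U^p(\qa\qb)U_p)_{\rhobar}$-equivariant short exact sequence of finite $\OO_\pf$-modules (taking $k=(2,\ldots,2)$ so that the coefficient sheaf is trivial, and using the compatibility of the maps $i$, $i^\dagger$ with change of level at $p$). Passing to the direct limit over $U_p$ is exact and yields short exact sequences of smooth $G(\Q_p)$-representations
$$\minCDarrowwidth10pt\begin{CD}0@>>>Y_\qa(V^p,\OO_\pf)_{\rhobar} @>>>X_\qb(U^p(\qa\qb),\OO_\pf)_{\rhobar}@>{i^\dagger}>>X_\qb(U^p(\qb),\OO_\pf)^2_{\rhobar}@>>>0\end{CD}$$
and similarly for the $\check{X},\check{Y}$ sequence, all equivariant for $\T(U^p(\qa\qb))_{\rhobar}$, $G(\Q_p)$ and $G_\q$. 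Here I use the identification $G(\Q_p)\cong G'(\Q_p)$ and the surjection $\T(U^p(\qa\qb))_{\rhobar}\to\T'(V^p)_{\rhobar}$ discussed just before Proposition \ref{galfacY} to regard the $Y$, $\check{Y}$ terms as modules over the larger Hecke algebra.

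Next I would $\varpi$-adically complete, i.e. form $\varprojlim_s(-\big/\varpi^s)$. For this to preserve exactness I need that the relevant terms are $\OO_\pf$-torsion free after localising at $\rhobar$, so that $-\big/\varpi^s$ applied to the short exact sequence stays short exact and the system satisfies Mittag--Leffler. The module $X_\qb(U_p U^p(\qb),\OO_\pf)$ is $\OO_\pf$-torsion free already without localising, by the remark following Corollary 1 of \cite{Raj} (as used in the proof of Proposition \ref{prop:es}); since $\check{Y}_\qa(V^p,\OO_\pf)\cong H^1(\Mi'(V)\otimes\overline{k_\q},\OO_\pf)$ by the rationality of the components of the special fibre noted in Section \ref{sec:RR}, its $\rhobar$-localisation is torsion free as in \cite[Lemma 4]{DT}; and $X_\qb(U^p(\qa\qb),\OO_\pf)_{\rhobar}$, $\check{X}_\qb(U^p(\qa\qb),\OO_\pf)_{\rhobar}$, $\widetilde{Y}_\qa$ are then torsion free by the exact sequences themselves. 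Completing then gives the two displayed short exact sequences of $\OO_\pf$-modules, and the fact that they are sequences of $\varpi$-adically admissible $\OO_\pf[G(\Q_p)]$-representations follows from Proposition 2.4.4 of \cite{Emordone}, exactly as in Proposition \ref{prop:es}; Hecke- and $G_\q$-equivariance is inherited in the limit.

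I expect the only genuine subtlety — the ``main obstacle'' — to be bookkeeping the level structures and the identifications: one must check that the classical sequences of Theorem \ref{RRESclass} for varying $U_p$ are compatible under the pullback maps, that localisation at $\rhobar$ commutes with the direct limit (which it does, as localisation is exact and commutes with filtered colimits), and that the two Hecke algebras and the two groups $G(\Q_p)$, $G'(\Q_p)$ are matched consistently. The torsion-freeness inputs are all already available in the excerpt, and the completion step is formally identical to the one already carried out for Propositions \ref{prop:es} and \ref{prop:Yses}, so no new analytic input is needed. Hence the proof is essentially: take the direct limit, invoke the torsion-freeness facts, complete, and cite Proposition 2.4.4 of \cite{Emordone} for admissibility.
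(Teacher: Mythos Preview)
Your proposal is correct and follows the paper's approach exactly: pass to the direct limit over $U_p$ in Theorem \ref{RRESclass}, then $\varpi$-adically complete using the torsion-freeness inputs you list, and invoke Proposition 2.4.4 of \cite{Emordone} for admissibility. The paper resolves the compatibility-as-$U_p$-varies issue you flag by observing that the dual-graph descriptions of the special fibres of $\mathbb{M}_\qb(U^p(\qa\qb)U_p)$ and $\mathbb{M}'_\qa(V^pV_p)$ in the proof of \cite[Theorem 3]{Raj} are visibly compatible under change of level at $p$.
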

\begin{proof}
The proof is as for Proposition \ref{prop:es}. We just need to check that the short exact sequences of Theorem \ref{RRESclass} are compatible as $U_p$ and $V_p$ vary, but this is clear from the proof of \cite[Theorem 3]{Raj}, since the descriptions of the dual graph of the special fibres of $\mathbb{M}_\qb(U^p(\qa\qb)U_p)$ and $\mathbb{M}'_\qa(V^pV_p)$ are compatible as $U_p$ and $V_p$ vary.
\end{proof}
We regard Theorem \ref{RRES} (especially the first exact sequence therein) as a geometric realisation of a $p$-adic Jacquet--Langlands correspondence. The following propositions explain why this is reasonable. We let $\widetilde{X}_\qb(U^p(\qa\qb),\OO_\pf)_{\rhobar}^{\qa\mhyphen\mathrm{new}}$ denote the kernel of the map $$i^\dagger: \widetilde{X}_\qb(U^p(\qa\qb),\OO_\pf)_{\rhobar}\rightarrow \widetilde{X}_\qb(U^p(\qb),\OO_\pf)^{\oplus 2}_{\rhobar}.$$ We also denote by $\widetilde{H}^1_D(U^p(\qa\qb),\OO_\pf)_{\rhobar}^{\qa\qb\mhyphen\mathrm{new}}$ the intersection $$\widetilde{H}^1_D(U^p(\qa\qb),\OO_\pf)_{\rhobar}^{\qa\mhyphen\mathrm{new}}\cap\widetilde{H}^1_D(U^p(\qa\qb),\OO_\pf)_{\rhobar}^{\qb\mhyphen\mathrm{new}}.$$
\begin{proposition}\label{prop:Xdoublenew}
A system of Hecke eigenvalues $\lambda: \T(U^p(\qa\qb)) \rightarrow E'$ occurs in $\widetilde{H}^1_D(U^p(\qa\qb),\OO_\pf)_{\rhobar}^{\qa\qb\mhyphen\mathrm{new}}$ if and only if it occurs in $\widetilde{X}_\qb(U^p(\qa\qb),\OO_\pf)_{\rhobar}^{\qa\mhyphen\mathrm{new}}$.
\end{proposition}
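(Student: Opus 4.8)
The plan is to bootstrap from Proposition \ref{prop:Xnew} by adjoining an auxiliary ``new at $\qa$'' layer. The starting point is that the entire formalism of the subsection ``Completed vanishing cycles'' at the place $\qb$ applies with the tame level there taken to be $U^p(\qa):=U^p\cap U_0(\qa)$, which is still unramified at $\qb$; the resulting level at which the modules are formed is $U^p(\qa)\cap U_0(\qb)=U^p(\qa\qb)$, and our fixed irreducible $\rhobar$ corresponds to a (non-Eisenstein) maximal ideal of $\T(U^p(\qa\qb))$. Applying Proposition \ref{prop:Xnew} at this tame level and place $\qb$ --- more precisely, invoking the exact sequence produced in its proof --- gives an exact sequence of $\varpi$-adically admissible $G(\Q_p)$-representations
$$0 \to \widetilde{X}_\qb(U^p(\qa\qb),\OO_\pf)_{\rhobar} \to \widetilde{H}^1_D(U^p(\qa\qb),\OO_\pf)_{\rhobar}^{\qb\mhyphen\mathrm{new}} \to \widetilde{X}_\qb(U^p(\qa\qb),\OO_\pf)_{\rhobar}(-1).$$

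Next I would verify that this sequence --- together with the commutative diagram underlying the proof of Proposition \ref{prop:Xnew}, assembled from Proposition \ref{prop:es}, Proposition \ref{prop:fix} and Lemmas \ref{oldred} and \ref{monodromy} --- is equivariant for the level-raising maps at $\qa$ and their adjoints $i^\dagger_\qa$, which relate tame level $U^p(\qa\qb)$ to tame level $U^p(\qb)$. This is seen at finite level: the $\qa$-degeneracy maps $M(U_pU^p(\qa\qb))\to M(U_pU^p(\qb))$ are correspondences supported away from $\qb$, so (for $U_p$ small enough) they act functorially on Jarvis's $\OO_\qb$-integral models and hence on the specialisation and cospecialisation sequences of Section \ref{sec:van}, as in Section 6 of \cite{Car}; one then passes to the limit over $U_p$ and $\varpi$-adically completes. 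Since the degeneracy maps at the distinct places $\qa$ and $\qb$ commute, $i^\dagger_\qa$ respects the $\qb$-new subspaces, its kernel on $\widetilde{H}^1_D(U^p(\qa\qb),\OO_\pf)_{\rhobar}^{\qb\mhyphen\mathrm{new}}$ is precisely $\widetilde{H}^1_D(U^p(\qa\qb),\OO_\pf)_{\rhobar}^{\qa\qb\mhyphen\mathrm{new}}$, and its kernel on $\widetilde{X}_\qb(U^p(\qa\qb),\OO_\pf)_{\rhobar}$ is by definition the $\qa$-new part.

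Applying the left-exact functor $\ker(i^\dagger_\qa)$ throughout and running the routine diagram chase --- using that the displayed sequence and its analogue at tame level $U^p$ are both exact at their first two terms --- I would obtain an exact sequence
$$0 \to \widetilde{X}_\qb(U^p(\qa\qb),\OO_\pf)_{\rhobar}^{\qa\mhyphen\mathrm{new}} \to \widetilde{H}^1_D(U^p(\qa\qb),\OO_\pf)_{\rhobar}^{\qa\qb\mhyphen\mathrm{new}} \to \widetilde{X}_\qb(U^p(\qa\qb),\OO_\pf)_{\rhobar}^{\qa\mhyphen\mathrm{new}}(-1).$$
As a Tate twist leaves the Hecke action unchanged, the final step of the proof of Proposition \ref{prop:Xnew} then applies verbatim: a system of Hecke eigenvalues $\lambda:\T(U^p(\qa\qb))\to E'$ occurs in the middle term if and only if it occurs in the two outer terms, which is exactly the assertion.

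The step I expect to be the main obstacle is making the functoriality of the second paragraph genuinely precise: not merely that the individual modules appearing in Proposition \ref{prop:Xnew} carry compatible $i^\dagger_\qa$-actions, but that the whole exact sequence --- and the diagram producing it --- is $i^\dagger_\qa$-equivariant, and that passing to $\qa$-new parts is compatible with localisation at $\rhobar$ and with $\varpi$-adic completion. None of this is deep --- at finite level it is immediate from the constructions with Jarvis's integral models --- but it is where the bookkeeping has to be done with care; granted it, the proposition is a formal consequence of Proposition \ref{prop:Xnew}.
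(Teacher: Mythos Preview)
Your proposal is correct and essentially the same as the paper's proof. The paper organises the argument as a snake-lemma computation on a diagram whose columns are the $i^\dagger_{\qa}$ sequences and whose horizontal maps are the injections $\widetilde{X}_{\qb}\hookrightarrow\widetilde{H}^{1,\qb\mhyphen\mathrm{new}}_D$ coming from the proof of Proposition~\ref{prop:Xnew}, arriving at precisely the three-term exact sequence you obtain by applying $\ker(i^\dagger_{\qa})$ directly; the compatibilities needed (which you correctly flag as the main bookkeeping) are identical in both versions, and the paper is in fact terser about them than you are.
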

\begin{proof}
We have the following commutative diagram:
$$\minCDarrowwidth20pt\begin{CD}0@. 0@. \\
@VVV @VVV \\
\widetilde{X}_\qb(U^p(\qa\qb),\OO_\pf)_{\rhobar}^{\qa\mhyphen\mathrm{new}} @>{\alpha}>>\widetilde{H}^1_D(U^p(\qa\qb),\OO_\pf)_{\rhobar}^{\qa\qb\mhyphen\mathrm{new}} @.\\
@VVV @VVV \\
\widetilde{X}_\qb(U^p(\qa\qb),\OO_\pf)_{\rhobar} @>{\beta}>> \widetilde{H}^1_D(U^p(\qa\qb),\OO_\pf)^{\qb\mhyphen\mathrm{new}}_{\rhobar} @.\\
@VVV @VVV \\
\widetilde{X}_\qb(U^p(\qb),\OO_\pf)^{\oplus 2}_{\rhobar} @>{\gamma}>> (\widetilde{H}^1_D(U^p(\qb),\OO_\pf)_{\rhobar}^{\qb\mhyphen\mathrm{new}})^{\oplus 2} @.\\
@VVV @. \\
0@. @.\end{CD}$$
where the two columns are exact, and the maps $\alpha$, $\beta$ and $\gamma$ are injections. Note that we are applying Lemma \ref{monodromy}. By the snake lemma we have an exact sequence
$$\minCDarrowwidth15pt\begin{CD}0@>>> \coker(\alpha) @>>> \coker(\beta) @>>> \coker(\gamma),\end{CD}$$
and the proof of Proposition \ref{prop:Xnew} implies that there are injections $$\begin{CD}@.\coker(\beta) \hookrightarrow \widetilde{X}_\qb(U^p(\qa\qb),\OO_\pf)_{\rhobar}(-1),\\@.\coker(\gamma) \hookrightarrow \widetilde{X}_\qb(U^p(\qb),\OO_\pf)^{\oplus 2}_{\rhobar}(-1).\end{CD}$$ This shows that there is a Hecke-equivariant injection $$\coker(\alpha)\hookrightarrow\widetilde{X}_\qb(U^p(\qa\qb),\OO_\pf)_{\rhobar}^{\qa\mhyphen\mathrm{new}}(-1).$$ Therefore, we have an exact sequence $$\minCDarrowwidth15pt\begin{CD}0@>>>\widetilde{X}_\qb(U^p(\qa\qb),\OO_\pf)_{\rhobar}^{\qa\mhyphen\mathrm{new}}@>\alpha>>\widetilde{H}^1_D(U^p(\qa\qb),\OO_\pf)_{\rhobar}^{\qa\qb\mhyphen\mathrm{new}} \\@. @>>>\widetilde{X}_\qb(U^p(\qa\qb),\OO_\pf)_{\rhobar}^{\qa\mhyphen\mathrm{new}}(-1).\end{CD}$$
From this last exact sequence we deduce that the systems of Hecke eigenvalues occurring in $\widetilde{H}^1_D(U^p(\qa\qb),\OO_\pf)_{\rhobar}^{\qa\qb\mhyphen\mathrm{new}}$ and $\widetilde{X}_\qb(U^p(\qa\qb),\OO_\pf)_{\rhobar}^{\qa\mhyphen\mathrm{new}}$ are the same.
\end{proof}
\begin{proposition}\label{YHsame}
A system of Hecke eigenvalues $\lambda: \T'(V^p) \rightarrow E'$ occurs in \\$\widetilde{H}_{D'}^1(V^p,\OO_\pf)_{\rhobar}$ if and only if it occurs in $\widetilde{Y}_\qa(V^p,\OO_\pf)_{\rhobar}$.
\end{proposition}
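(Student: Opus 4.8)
The plan is to read the statement off Proposition~\ref{galfacY}, which has already done the substantive work; what remains is only a comparison of Hecke-module structures. Recall that applying Proposition~\ref{galfac} to the group $G'$ gives, for $Y = \Hom_{\T'(V^p)_{\rhobar}[G_F]}(\rho_{\rhobar,V^p}^m,\widetilde{H}^1_{D'}(V^p,\OO_\pf)_{\rhobar})$, an isomorphism
$$ev_E : E\otimes_{\OO_\pf}\rho_{\rhobar,V^p}^m \otimes_{\T'(V^p)_{\rhobar}} Y \;\xrightarrow{\;\sim\;}\; \widetilde{H}^1_{D'}(V^p,\OO_\pf)_{\rhobar,E}$$
that is equivariant for $G_F$ and for $\T'(V^p)_{\rhobar}$. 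Since $\rho_{\rhobar,V^p}^m$ is a deformation of $\rhobar$ to $\T'(V^p)_{\rhobar}$, it is realised on a free $\T'(V^p)_{\rhobar}$-module of rank $2$, so, discarding the Galois action, $ev_E$ exhibits an isomorphism of $\T'(V^p)_{\rhobar}$-modules $\widetilde{H}^1_{D'}(V^p,\OO_\pf)_{\rhobar,E} \cong (Y\otimes_{\OO_\pf}E)^{\oplus 2}$.

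Next I would invoke Proposition~\ref{galfacY}: under $ev_E$ the quotient $\widetilde{Y}_\qa(V^p,\OO_\pf)_{\rhobar,E}(-1)$ is identified with $V_{1,E}\otimes_{\T'(V^p)_{\rhobar}}Y$, where $V_{1,E}$ is free of rank one over $\T'(V^p)_{\rhobar}\otimes_{\OO_\pf}E$. Hence $V_{1,E}\otimes_{\T'(V^p)_{\rhobar}}Y\cong Y\otimes_{\OO_\pf}E$ as $\T'(V^p)_{\rhobar}$-modules, and since a Tate twist changes only the $G_\q$-action and not the Hecke action, we get an isomorphism of $\T'(V^p)_{\rhobar}$-modules $\widetilde{Y}_\qa(V^p,\OO_\pf)_{\rhobar,E}\cong Y\otimes_{\OO_\pf}E$.

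Finally, for a $\T'(V^p)$-module $N$ and any $n\ge 1$, a homomorphism $\lambda:\T'(V^p)\to E'$ occurs in $N$ iff it occurs in $N^{\oplus n}$, because $(N^{\oplus n}\otimes_{\OO_\pf}E')[\ker(\lambda)] = ((N\otimes_{\OO_\pf}E')[\ker(\lambda)])^{\oplus n}$; and $\lambda$ occurs in $\widetilde{H}^1_{D'}(V^p,\OO_\pf)_{\rhobar}$ (resp.\ in $\widetilde{Y}_\qa(V^p,\OO_\pf)_{\rhobar}$) exactly when it occurs in $\widetilde{H}^1_{D'}(V^p,\OO_\pf)_{\rhobar,E}$ (resp.\ in $\widetilde{Y}_\qa(V^p,\OO_\pf)_{\rhobar,E}$). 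Combining the two identifications of the previous paragraphs, $\lambda$ occurs in $\widetilde{H}^1_{D'}(V^p,\OO_\pf)_{\rhobar}$ $\Longleftrightarrow$ $\lambda$ occurs in $Y\otimes_{\OO_\pf}E$ $\Longleftrightarrow$ $\lambda$ occurs in $\widetilde{Y}_\qa(V^p,\OO_\pf)_{\rhobar}$, which is the claim. There is no real obstacle once Proposition~\ref{galfacY} is in hand; the only points needing a little care are that the Tate twist is invisible to the Hecke action and that ``occurring in'' is insensitive to passing to finite direct sums and to inverting $\varpi$. (Alternatively one could run the snake-lemma argument from the proof of Proposition~\ref{prop:Xnew} starting from the short exact sequence of Proposition~\ref{prop:Yses}, but this is longer and yields nothing extra.)
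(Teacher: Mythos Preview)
Your argument is correct and is precisely an unpacking of the paper's one-line proof ``This follows from Proposition~\ref{galfacY}'': you make explicit that $ev_E$ identifies $\widetilde{H}^1_{D'}(V^p,\OO_\pf)_{\rhobar,E}$ with two copies of $Y\otimes_{\OO_\pf}E$ while $\widetilde{Y}_\qa(V^p,\OO_\pf)_{\rhobar,E}$ is identified with one copy, so the Hecke eigensystems coincide. The only minor remark is that your alternative via the snake lemma and Proposition~\ref{prop:Yses} is not merely longer but genuinely weaker without further input, since in that setting one would still need to know $\widetilde{Y}_\qa$ and $\widetilde{\check{Y}}_\qa$ carry the same eigensystems, which again comes down to Proposition~\ref{galfacY} (or the monodromy pairing).
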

\begin{proof}
This follows from Lemma \ref{Ymonodromy} and the short exact sequence of Proposition \ref{prop:Yses}.
\end{proof}
Combined with Theorem \ref{RRES} this has as a consequence
\begin{corollary}
The first exact sequence of Theorem \ref{RRES} induces a bijection between the systems of eigenvalues for $\T'(V^p)$ appearing in $\widetilde{H}_{D'}^1(V^p,\OO_\pf)_{\rhobar}$ and the systems of eigenvalues for $\T(U^p(\qa\qb))$ appearing in $\widetilde{H}^1_D(U^p(\qa\qb),\OO_\pf)_{\rhobar}^{\qa\qb\mhyphen\mathrm{new}}$.
\end{corollary}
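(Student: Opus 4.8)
The plan is to chain together the first exact sequence of Theorem \ref{RRES} with the two propositions immediately preceding the corollary. First I would note that, since $\widetilde{X}_\qb(U^p(\qa\qb),\OO_\pf)_{\rhobar}^{\qa\mhyphen\mathrm{new}}$ is by definition the kernel of $i^\dagger$, exactness of the first sequence of Theorem \ref{RRES} gives a $\T(U^p(\qa\qb))_{\rhobar}$-equivariant isomorphism
$$\widetilde{Y}_\qa(V^p,\OO_\pf)_{\rhobar}\;\cong\;\widetilde{X}_\qb(U^p(\qa\qb),\OO_\pf)_{\rhobar}^{\qa\mhyphen\mathrm{new}};$$
in particular the systems of $\T(U^p(\qa\qb))$-eigenvalues occurring in the two modules coincide.

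Next I would observe that the $\T(U^p(\qa\qb))_{\rhobar}$-action on $\widetilde{Y}_\qa(V^p,\OO_\pf)_{\rhobar}$ factors through the ring homomorphism $\T(U^p(\qa\qb))_{\rhobar}\to\T'(V^p)_{\rhobar}$ introduced above Proposition \ref{galfacY} (this is clear at each finite level $V_pV^p$, coming from the classical Jacquet--Langlands correspondence, and it passes to the projective limit). Since $\T'(V^p)_{\rhobar}$ is topologically generated by the Hecke operators $T_v,S_v$, all of which lie in the image of this map (the relevant sets of places agree because $U^p$ matches $V^p$ away from $\qa,\qb$ and $D'$ is ramified at $\qa,\qb$), any continuous $E'$-valued character of $\T'(V^p)_{\rhobar}$ is determined by its pullback to $\T(U^p(\qa\qb))_{\rhobar}$. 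Consequently pullback along this map induces a bijection between the systems of $\T'(V^p)$-eigenvalues occurring in $\widetilde{Y}_\qa(V^p,\OO_\pf)_{\rhobar}$ and the systems of $\T(U^p(\qa\qb))$-eigenvalues occurring in $\widetilde{Y}_\qa(V^p,\OO_\pf)_{\rhobar}$.

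Finally I would feed in the two cited propositions: the proposition just before the corollary identifies the systems of $\T'(V^p)$-eigenvalues occurring in $\widetilde{Y}_\qa(V^p,\OO_\pf)_{\rhobar}$ with those occurring in $\widetilde{H}^1_{D'}(V^p,\OO_\pf)_{\rhobar}$, while Proposition \ref{prop:Xdoublenew} identifies the systems of $\T(U^p(\qa\qb))$-eigenvalues occurring in $\widetilde{X}_\qb(U^p(\qa\qb),\OO_\pf)_{\rhobar}^{\qa\mhyphen\mathrm{new}}$ with those occurring in $\widetilde{H}^1_D(U^p(\qa\qb),\OO_\pf)_{\rhobar}^{\qa\qb\mhyphen\mathrm{new}}$. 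Composing these four identifications produces the asserted bijection, and by construction it is the one induced by the first exact sequence of Theorem \ref{RRES}.

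The argument is essentially formal once the Hecke bookkeeping is set up correctly, so the only real point of care --- and the step I would be most wary of --- is the middle one: making sure that ``a system of eigenvalues for $\T'(V^p)$'' and ``a system of eigenvalues for $\T(U^p(\qa\qb))$'' are being compared along the map $\T(U^p(\qa\qb))_{\rhobar}\to\T'(V^p)_{\rhobar}$ coming from classical Jacquet--Langlands, so that the relation ``occurs in'' transports faithfully in both directions. With that matching in place, the remaining work is just diagram-chasing with results already established, and I anticipate no substantial obstacle.
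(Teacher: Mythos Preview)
Your proposal is correct and follows exactly the approach the paper intends: the paper itself gives no explicit proof, simply presenting the corollary as an immediate consequence of Theorem \ref{RRES} together with the two preceding propositions (Proposition \ref{prop:Xdoublenew} and the proposition on $\widetilde{Y}_\qa$). Your write-up fills in precisely this chain, and you are right to flag the Hecke-algebra comparison along $\T(U^p(\qa\qb))_{\rhobar}\to\T'(V^p)_{\rhobar}$ as the only point requiring care --- the paper leaves this implicit.
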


\section{Eigenvarieties and an overconvergent Jacquet--Langlands correspondence}
In this section we will apply Emerton's construction of eigenvarieties to deduce an overconvergent Jacquet--Langlands correspondence from Theorem \ref{RRES}. We will again be using the functor of passing to locally analytic vectors, as discussed in (\ref{MazJac}) above.
\subsection{Locally algebraic vectors}\label{localg}
Fix a maximal ideal $\m$ of $\T(\up)$ which is non-Eisenstein, with associated mod $p$ Galois representation $\rhobar$. Recall that given an admissible Banach $G(\Q_p)$-representation $V$ over $\ep$, the $G(\Q_p)$-action on $V^{an}$ differentiates to give an action of the Lie algebra $\g$ of $G(\Q_p)$. 
\begin{proposition}
There are natural isomorphisms (for $\FF = \FFk$ or $\rFFk$) \begin{enumerate}\item $H^0(\g,\widetilde{H}^1_D(\up,\FFk)^{an}_{\rhobar,\ep})\cong H^1_D(\up,\FFk)_{\rhobar,\ep},$
\item $H^1(\g,\widetilde{H}^1_D(\up,\FFk)^{an}_{\rhobar,\ep})\cong 0,$
\item $H^0(\g,\widetilde{H}^1_\red(\up,\FF)^{an}_{\rhobar,\ep})\cong H^1_\red(\up,\FF)_{\rhobar,\ep},$
\item $H^1(\g,\widetilde{H}^1_\red(\up,\FF)^{an}_{\rhobar,\ep})\cong 0,$
\item $H^0(\g,\widetilde{X}_\q(\up,\FFk)^{an}_{\rhobar,\ep})\cong X_\q (\up,\FFk)_{\rhobar,\ep},$
\item $H^0(\g,\widetilde{\check{X}}_\q(\up,\FFk)^{an}_{\rhobar,\ep})\cong \check{X}_\q(\up,\FFk)_{\rhobar,\ep},$\end{enumerate}
\end{proposition}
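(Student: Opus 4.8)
The plan is to deduce all six isomorphisms from Emerton's comparison between completed and classical cohomology, applied to the various towers of proper curves that occur, together with the short exact sequences already established in Propositions \ref{smoothses} and \ref{prop:es}. First I would reduce to trivial coefficients: by Lemma \ref{changeweight} (and the analogous isomorphisms for the $\red$, $X_\q$ and $\check{X}_\q$ variants proved just above) each completed cohomology space in the statement is the corresponding trivial-coefficient space tensored over $\OO_\pf$ with the fixed algebraic $G(\Q_p)$-representation $W^0_{k,\pf}$, and both passing to locally analytic vectors and taking $\g$-cohomology commute with this tensoring; Emerton's comparison for the trivial-coefficient completed cohomology twisted by an algebraic representation $W$ then recovers the space of locally $W$-algebraic vectors, which is precisely the classical $\FFk$-coefficient cohomology appearing on the right-hand sides.

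For (1) and (2) I would apply this comparison (\cite[\S 2.2]{Emlg}, cf.\ \cite{Emint}) to the tower of proper smooth Shimura curves $\{M(U_pU^p(\q))\}_{U_p}$. Here $\widetilde{H}^2_D(\up,\OO_\pf)=0$ by the Proposition above, and $\widetilde{H}^0_D(\up,\OO_\pf)$ is a space of continuous functions on an idele class group on which the prime-to-$p$ Hecke operators act through characters, hence is Eisenstein, so $\widetilde{H}^0_D(\up,\OO_\pf)_\m=0$. After localising at the non-Eisenstein ideal $\m$ the comparison collapses, giving $H^0(\g,-)\cong H^1_D(\up,\FFk)_{\m,\ep}$ and $H^i(\g,-)=0$ for $i\geq 1$ (for $i=1$ one uses that $H^2$ of a proper curve carries only Eisenstein Hecke classes). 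The same argument applies with $\up$ replaced by $U^p$. For (3) and (4): when $\FF=\rFFk$ I would use Lemma \ref{oldred} to identify $\widetilde{H}^1_{\red}(\up,\rFFk)$ with $\widetilde{H}^1_D(U^p,\OO_\pf)^{\oplus 2}$ (twisted by $W^0_{k,\pf}$), reducing to the statements just proved at level $U^p$; when $\FF=\FFk$ I would run the comparison for the tower $\{\Mi(U_p\up)\otimes\overline{k_\q}\}_{U_p}$ of proper (nodal) curves over $\overline{k_\q}$, whose completed cohomology is $\widetilde{H}^\bullet_{\red}(\up,-)$, the point being that $\widetilde{H}^0_{\red}$ and $\widetilde{H}^2_{\red}$ are built from functions on the irreducible components of the special fibres, on which the prime-to-$\q p$ Hecke operators act Eisenstein-ly, so they again vanish after localising at $\m$.

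For (5) and (6) I would pass to locally analytic vectors in the two short exact sequences of Proposition \ref{prop:es} --- exact, by \cite{STei} --- and form the long exact sequences in $\g$-cohomology. In the sequence coming from $0\to\widetilde{H}^1_{\red}(\up,\FFk)_\m\to\widetilde{H}^1_D(\up,\FFk)_\m\to\widetilde{X}_\q(\up,\FFk)_\m(-1)\to 0$, parts (1)--(4) identify $H^0(\g,-)$ of the first two terms with $H^1_{\red}(\up,\FFk)_{\m,\ep}$ and $H^1_D(\up,\FFk)_{\m,\ep}$ and annihilate their higher $\g$-cohomology, so $H^0(\g,\widetilde{X}_\q(\up,\FFk)^{an}_{\m,\ep})(-1)$ is the cokernel of the induced map; by functoriality that map is the degeneracy map of Proposition \ref{smoothses}, whose cokernel is $X_\q(\up,\FFk)_{\m,\ep}(-1)$, which (after untwisting) gives (5), and as a byproduct $H^i(\g,\widetilde{X}_\q(\up,\FFk)^{an}_{\m,\ep})=0$ for $i\geq 1$. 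An identical chase in the sequence $0\to\widetilde{\check{X}}_\q(\up,\FFk)_\m\to\widetilde{H}^1_{\red}(\up,\FFk)_\m\to\widetilde{H}^1_{\red}(\up,\rFFk)_\m\to 0$, comparing with the corresponding sequence of Proposition \ref{smoothses} to identify the induced map and its kernel, gives (6).

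The step I expect to be the main obstacle is the comparison for the special-fibre tower used in (3)/(4): Emerton's results are written for Shimura varieties in characteristic zero, so one must either verify that the argument --- Hochschild--Serre for the tower of covers, plus the finiteness furnished by properness --- carries over verbatim to a tower of proper (possibly singular) curves over $\overline{\F}_l$, or else bypass it by identifying $\widetilde{X}_\q(\up,\OO_\pf)$ and $\widetilde{\check{X}}_\q(\up,\OO_\pf)$ with the completed degree-zero cohomology of the zero-dimensional Shimura set attached to the definite quaternion algebra $\overline{D}$ (via the $\q$-adic uniformisation, as in \cite{Raj}), for which $H^{>0}(\g,(-)^{an})=0$ and $H^0(\g,(-)^{an})$ is the subspace of smooth vectors by a locally analytic Poincar\'e lemma, and then deducing (3)/(4) for $\FFk$ from the short exact sequences of Proposition \ref{prop:es}. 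Either route requires some care with the exactness of $\varprojlim_s$ and with the $\m$-localisations, handled exactly as in the proof of Proposition \ref{prop:es}.
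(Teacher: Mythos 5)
Your proposal is correct and follows essentially the same route as the paper: Emerton's spectral sequence relating $\g$-cohomology of locally analytic vectors to classical cohomology (noting, as the paper does, that it applies to the special-fibre tower via Hochschild--Serre) gives parts (1)--(4) after localising at the non-Eisenstein ideal $\m$, and the long exact sequences of $\g$-cohomology attached to the two short exact sequences of Proposition \ref{prop:es}, compared with those of Proposition \ref{smoothses}, give parts (5) and (6). The extra devices you offer (reduction to trivial coefficients, the alternative identification via $\q$-adic uniformisation) are not needed but do no harm.
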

\begin{proof}
The first and second natural isomorphisms comes from the spectral sequence (for completed \'etale cohomology) discussed in \cite[Proposition 2.4.1]{Emint}, since localising at $\m$ kills $H^i_D(\up,\FFk)$ and $\widetilde{H}^i_D(\up,\FFk)$ for $i=0,2$. 

The same argument works for the third and fourth isomorphisms, since as noted in the proof of \cite[Proposition 2.4.1]{Emint} the spectral sequence of \cite[Corollary 2.2.18]{Emint} can be obtained using the Hochschild-Serre spectral sequence for \'etale cohomology, which applies equally well to the cohomology of the special fibre of $\mathbb{M}_\q(U_p\up)$. 

We now turn to the fifth isomorphism. By the first short exact sequence of Proposition \ref{prop:es} and the exactness of localising at $\m$, inverting $\varpi$ and then passing to locally analytic vectors, there is a short exact sequence
$$\minCDarrowwidth20pt\begin{CD}0@>>>\widetilde{H}^1_{\red}(\up,\FFk)^{an}_{\rhobar,\ep} @>>>\widetilde{H}^1_D(\up,\FFk)^{an}_{\rhobar,\ep}\\@.@>>>\widetilde{X}_\q(\up,\FFk)(-1)^{an}_{\rhobar,\ep}@>>>0.\end{CD}$$
Taking the long exact sequence of $\g$-cohomology then gives an exact sequence 
%
$$\minCDarrowwidth20pt\begin{CD}0@>>>H^0(\g,\widetilde{H}^1_\red(\up,\FFk)^{an}_{\rhobar,\ep})@>>>H^0(\g,\widetilde{H}^1_D(\up,\FFk)^{an}_{\rhobar,\ep})\\@>>>H^0(\g,\widetilde{X}_\q(\up,\FFk)(-1)^{an}_{\rhobar,\ep})@>>>H^1(\g,\widetilde{H}^1_\red(\up,\FFk)^{an}_{\rhobar,\ep}),\end{CD}$$
so applying the third, first and fourth part of the proposition to the first, second and fourth terms in this sequence respectively we get a short exact sequence
$$\minCDarrowwidth20pt\begin{CD}0@>>> H^1_{\red}(\up,\FFk)_{\rhobar,\ep} @>>>H^1_D(\up,\FFk)_{\rhobar,\ep}\\@. @>>>H^0(\g,\widetilde{X}_\q(\up,\FFk)(-1)^{an}_{\rhobar,\ep})@>>>0,\end{CD}$$

so identifying this with the short exact sequence of Proposition 2.3.2 gives the desired isomorphism $$H^0(\g,\widetilde{X}_\q(\up,\FFk)^{an}_{\rhobar,\ep})\cong X_\q (\up,\FFk)_{\rhobar,\ep}.$$

Finally the sixth isomorphism follows from the first four in the same way as the fifth, using the second short exact sequence of Proposition \ref{prop:es}.
\end{proof}

Given a continuous $G(\Q_p)$-representation $V$ we denote by $V^{alg}$ the space of locally algebraic vectors in $V$. Let $E(1)$ denote a one dimensional $E$-vector space on which $G(\Q_p) \times G_\q$ acts by $\prod_{v\mid p}N_{F_v/\Q_p}\circ (\det)^{-1} \otimes \epsilon$, where $\epsilon$ is the cyclotomic character of $G_\q$. For any $n \in \Z$ let $E(n)=E(1)^{\otimes n}$. We can now proceed as in Theorem 7.4.2 of \cite{Emlgc} to deduce
\begin{theorem}
There are natural $G(\Q_p) \times G_\q \times \T(\up)_{\rhobar}$-equivariant isomorphisms \begin{enumerate}\item $\bigoplus_{k,n\in \Z} H^1_D(\up,\FFk)_{\rhobar,\ep}\otimes_{\ep} W_{k,\pf}^\vee \otimes_{\ep} \ep(n) \cong \widetilde{H}^1_D(\up,\OO_\pf)^{alg}_{\rhobar,\ep}.$ 
\item $\bigoplus_{k,n\in \Z} H^1_\red(\up,\FFk)_{\rhobar,\ep}\otimes_{\ep} W_{k,\pf}^\vee \otimes_{\ep} \ep(n) \cong \widetilde{H}^1_\red(\up,\OO_\pf)^{alg}_{\rhobar,\ep}.$ 
\item $\bigoplus_{k,n\in \Z} H^1_\red(\up,\rFFk)_{\rhobar,\ep}\otimes_{\ep} W_{k,\pf}^\vee \otimes_{\ep} \ep(n) \cong \widetilde{H}^1_\red(\up,r_*r^*\OO_\pf)^{alg}_{\rhobar,\ep}.$ 
\item $\bigoplus_{k,n\in \Z} X_\q(\up,\FFk)_{\rhobar,\ep}\otimes_{\ep} W_{k,\pf}^\vee \otimes_{\ep} \ep(n) \cong \widetilde{X}_\q(\up,\OO_\pf)^{alg}_{\rhobar,\ep}.$ 
\item $\bigoplus_{k,n\in \Z} \check{X}_\q(\up,\FFk)_{\rhobar,\ep}\otimes_{\ep} W_{k,\pf}^\vee \otimes_{\ep} \ep(n) \cong \widetilde{\check{X}}_\q(\up,\OO_\pf)^{alg}_{\rhobar,\ep}.$ 
\end{enumerate}
In all the above direct sums, the index $k$ runs over $d$-tuples with all entries the same parity and $\ge 2$.
\end{theorem}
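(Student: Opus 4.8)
The plan is to deduce all five isomorphisms from Emerton's general framework relating locally algebraic vectors in completed cohomology to classical cohomology with algebraic coefficients, exactly as in Theorem 7.4.2 of \cite{Emlgc}, and then to propagate the result through the exact sequences of Proposition \ref{prop:es} for the auxiliary spaces $X_\q$, $\check{X}_\q$ and $r_*r^*\FFk$. The key input is Lemma \ref{changeweight} (and its analogues for $\widetilde{H}^1_\red$, $\widetilde{X}_\q$, $\widetilde{\check{X}}_\q$ established just before \S\ref{mazprinclass}), which expresses completed cohomology with coefficients in $\FFk$ as completed cohomology with trivial coefficients tensored with $W^0_{k,\pf}$. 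Combined with the fact that the locally algebraic vectors of an admissible Banach $G(\Q_p)$-representation $V$ decompose (after passing to a localisation which kills $H^0$ and $H^2$, so that $\g$-cohomology is concentrated in degree zero) as $\bigoplus_{k,n} \left(V^{an}[\,\text{weight } k,n\,]\right)$ with the weight-$(k,n)$ part computed by $H^0(\g, V^{an}\otimes W_{k,\pf}^\vee\otimes \ep(n))$, the previous Proposition (parts (1)--(6)) immediately identifies each weight-$(k,n)$ piece with the corresponding classical space $H^1_D(\up,\FFk)_{\m,\ep}$, $H^1_\red(\up,\FF)_{\m,\ep}$, $X_\q(\up,\FFk)_{\m,\ep}$ or $\check{X}_\q(\up,\FFk)_{\m,\ep}$. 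The Tate twist $\ep(n)$ accounts for the central character / cyclotomic twists, and the $G_\q$-equivariance is built into the definition of $\ep(1)$ via the unramified character $\epsilon$.

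Concretely I would proceed as follows. First, for part (1): apply the argument of \cite[Theorem 7.4.2]{Emlgc} directly to $\widetilde{H}^1_D(\up,\OO_\pf)$, using Lemma \ref{hhat} to identify this with $\widehat{H}^1$, Lemma \ref{changeweight} to handle the coefficient sheaves $\FFk$, and parts (1)--(2) of the previous Proposition to see that the $\g$-cohomology is concentrated in degree $0$ and computes the classical space after localisation at $\m$. The sum over $n\in\Z$ arises because twisting by $\ep(n)$ permutes the classical cohomology among the various central characters occurring in $\widetilde{H}^1_D(\up,\OO_\pf)^{alg}$, and one checks (as in \cite{Emlgc}) that every locally algebraic vector lies in exactly one such weight-and-twist eigenspace. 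Second, for parts (2) and (3): repeat verbatim, replacing $\widetilde{H}^1_D$ by $\widetilde{H}^1_\red$ with coefficients $\FFk$ resp. $r_*r^*\FFk$, using the analogue of Lemma \ref{changeweight} for $\widetilde{H}^1_\red$ and parts (3)--(4) of the previous Proposition; the same Hochschild--Serre argument for the special fibre of $\Mi(U_p\up)$ that was used there applies. Third, for parts (4) and (5): take the locally-algebraic-vectors functor, which is exact on admissible Banach representations, and apply it to the two short exact sequences of Proposition \ref{prop:es} (localised at $\m$, scalars inverted); since $(-)^{alg}$ is exact, parts (1), (2), (3) together with the five-lemma identify $\widetilde{X}_\q(\up,\OO_\pf)^{alg}_{\m,\ep}$ and $\widetilde{\check{X}}_\q(\up,\OO_\pf)^{alg}_{\m,\ep}$ with the claimed direct sums — equivalently, one reads these off from parts (5) and (6) of the previous Proposition exactly as parts (1)--(3) here followed from parts (1)--(4) there.

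I expect the main obstacle to be bookkeeping rather than any genuine difficulty: one must carefully match the normalisation of the Tate twist $\ep(1)$ — which bundles together the $G(\Q_p)$-action by $\prod_{v\mid p}N_{F_v/\Q_p}\circ\det$ and the $G_\q$-action by the unramified character $\epsilon$ sending $\mathrm{Frob}_\lambda$ to $N_{F/\Q}(\lambda)$ — with the Galois action on the classical étale cohomology groups $H^1(M(U_pU^p)_{\overline F},\FFk)$ and on the vanishing/nearby cycle modules $X_\q$, $\check{X}_\q$, and with the appearance of $W_{k,\pf}^\vee$ (the dual) rather than $W_{k,\pf}$ itself. This is precisely the content of the weight-lattice computations in \cite[\S 7]{Emlgc} (and \cite{carmauv}) and requires no new idea; once the $n=0$, $k=(2,\dots,2)$ case is pinned down, the general case follows by the twisting/tensoring formalism. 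The only point needing a word of care is that the exact sequences of Proposition \ref{prop:es} are sequences of $\varpi$-adically admissible representations, so after inverting $\varpi$ they are sequences of admissible Banach representations to which the exact functor $(-)^{an}$ (hence also $(-)^{alg}$) applies; this is already implicit in the proof of the preceding Proposition and needs only to be invoked.
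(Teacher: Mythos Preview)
Your proposal is correct and follows the same approach as the paper. The paper's proof is extremely terse --- it simply says the argument is as for \cite[Theorem~7.4.2]{Emlgc}, with the one observation that $\{(\xi^{(k)})^\vee \otimes (\otimes_{i=1}^d \det \circ \xi_i)^n\}_{k,n}$ is a complete set of isomorphism class representatives of irreducible algebraic representations of $G$ factoring through $G^c$; your write-up is essentially an unwinding of what that entails, leaning on the previous Proposition exactly as intended.
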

\begin{proof}
The proof is as for Theorem 7.4.2 of \cite{Emlgc}, since $$\{(\xi^{(k)})^\vee \otimes (\otimes_{i=1}^d (\det)^{-1} \circ \xi_i)^{n}\}_{k,n}$$ is a complete set of isomorphism class representatives of irreducible algebraic representations of $G$ which factor through $G^c$. Note that our definition of $E(1)$ is given in terms of $(\det)^{-1}$ --- the sign change from loc. cit. is due to the convention for the canonical model used by Carayol (and used here), where the Galois action on the group of connected components is the inverse of that used by Emerton.
\end{proof}

Applying the same arguments to $G'$, with $\m'$ a non-Eisenstein maximal ideal of $\T'(V^p)$ (and associated Galois representation $\rhobar'$) we also obtain
\begin{theorem}\label{Ylocalg}
There are natural $G'(\Q_p) \times G_\q \times \T'(V^p)_{\rhobar'}$-equivariant isomorphisms \begin{enumerate}\item $\bigoplus_{k,n\in \Z} H^1_{D'}(V^p,\FFk)_{\rhobar',\ep}\otimes_{\ep} W_{k,\pf}^\vee \otimes_{\ep} \ep(n) \cong \widetilde{H}^1_{D'}(V^p,\OO_\pf)^{alg}_{\rhobar',\ep}.$ 
\item $\bigoplus_{k,n\in \Z} Y_\q(V^p,\FFk)_{\rhobar',\ep}\otimes_{\ep} W_{k,\pf}^\vee \otimes_{\ep} \ep(n) \cong \widetilde{Y}_\q(V^p,\OO_\pf)^{alg}_{\rhobar',\ep}.$ 
\item $\bigoplus_{k,n\in \Z} \check{Y}_\q(V^p,\FFk)_{\rhobar',\ep}\otimes_{\ep} W_{k,\pf}^\vee \otimes_{\ep} \ep(n) \cong \widetilde{\check{Y}}_\q(V^p,\OO_\pf)^{alg}_{\rhobar',\ep}.$ 
\end{enumerate}
\end{theorem}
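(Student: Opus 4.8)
The plan is to obtain the three isomorphisms by repeating, for the group $G'$, the argument proving the preceding theorem, which in turn follows Theorem 7.4.2 of \cite{Emlgc}. The only ingredient that has to be in place first is the $G'$-analogue of the preceding proposition, namely the identification of the $\g$-cohomology of the relevant locally analytic representations with classical cohomology. (Since $D$ and $D'$ are both split at $p$ one has $G(\Q_p)\cong G'(\Q_p)$, and $\g$ denotes their common Lie algebra.) Explicitly, I would first establish
$$H^0(\g,\widetilde{H}^1_{D'}(V^p,\FFk)^{an}_{\m',E})\cong H^1_{D'}(V^p,\FFk)_{\m',E},\qquad H^1(\g,\widetilde{H}^1_{D'}(V^p,\FFk)^{an}_{\m',E})=0,$$
and the same two statements with the pair $(\widetilde{H}^1_{D'},H^1_{D'})$ replaced by $(\widetilde{Y}_\q,Y_\q)$ and by $(\widetilde{\check{Y}}_\q,\check{Y}_\q)$.

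For $\widetilde{H}^1_{D'}$ this is the verbatim analogue of the first two isomorphisms of the preceding proposition: the spectral sequence for completed \'etale cohomology of \cite[Proposition 2.4.1]{Emint} degenerates after localisation at the non-Eisenstein ideal $\m'$, because $H^i_{D'}(V^p,\FFk)_{\m'}$ and $\widetilde{H}^i_{D'}(V^p,\FFk)_{\m'}$ vanish for $i=0,2$. For $\widetilde{\check{Y}}_\q$ I would use that, by the direct limit over $V_p$ of the finite-level isomorphisms $\check{Y}_\q(V_pV^p,\FFk)\cong H^1(\Mi'(V_pV^p)\otimes\overline{k_\q},\FFk)$ recalled in \S\ref{sec:RR}, $\widetilde{\check{Y}}_\q(V^p,\FFk)$ is the completed cohomology of the special fibres of the curves $\Mi'(V_pV^p)$; as observed in the proof of \cite[Proposition 2.4.1]{Emint}, the spectral sequence of \cite[Corollary 2.2.18]{Emint} arises from the Hochschild--Serre spectral sequence for \'etale cohomology, which applies equally to these special fibres, and this gives the comparison directly. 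Finally, for $\widetilde{Y}_\q$ I would localise the short exact sequence of Proposition \ref{prop:Yses} at $\m'$, invert $\varpi$ and pass to locally analytic vectors (all exact operations), realising $\widetilde{Y}_\q(-1)$ as the quotient of $\widetilde{H}^1_{D'}$ by $\widetilde{\check{Y}}_\q$, and then read off its $\g$-cohomology from the long exact sequence, feeding in the values just computed for $\widetilde{\check{Y}}_\q$ and $\widetilde{H}^1_{D'}$ together with the smooth short exact sequence --- exactly as the fifth isomorphism of the preceding proposition was obtained. I expect this last point, the $\g$-cohomology comparison for the auxiliary modules $\widetilde{Y}_\q$ and $\widetilde{\check{Y}}_\q$, which do not a priori arise from the cohomology of a single smooth proper variety, to be the only step requiring genuine care; but, as with $\widetilde{X}_\q$ and $\widetilde{\check{X}}_\q$ in the case of $G$, it draws on nothing beyond Rajaei's description of the special fibre of $\Mi'(V)$ (in particular the vanishing $H^1(\Mi'(V)\otimes\overline{k_\q},r_*r^*\FFk)=0$, which is why $\widetilde{\check{Y}}_\q$ can be treated directly).

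Granted these comparisons, the theorem follows as does Theorem 7.4.2 of \cite{Emlgc}. The space of locally algebraic vectors in each of $\widetilde{H}^1_{D'}(V^p,\OO_\pf)_{\m',E}$, $\widetilde{Y}_\q(V^p,\OO_\pf)_{\m',E}$ and $\widetilde{\check{Y}}_\q(V^p,\OO_\pf)_{\m',E}$ decomposes as a direct sum of $W$-isotypic components indexed by the irreducible algebraic representations $W$ of $G'$, and by the weight-change isomorphism of Lemma \ref{changeweight} and its analogues for $\widetilde{Y}_\q$ and $\widetilde{\check{Y}}_\q$ (obtained by $\varpi$-adically completing and localising the corresponding smooth statements) each such component is $W$ tensored with a smooth multiplicity space which the comparisons above identify with the classical cohomology $H^1_{D'}(V^p,\FFk)_{\m',E}$ (respectively $Y_\q(V^p,\FFk)_{\m',E}$, $\check{Y}_\q(V^p,\FFk)_{\m',E}$) for the corresponding weight. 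The essential observation, identical to the case of $G$, is that
$$\{((\xi')^{(k)})^\vee \otimes (\otimes_{i=1}^{d}\det\circ\xi'_i)^n\}_{k,n}$$
is a complete set of isomorphism-class representatives for the irreducible algebraic representations of $G'$ which factor through $(G')^c$; this is the source of the restriction to $d$-tuples $k$ with all entries of the same parity and $\ge 2$, and it holds for $G'$ for the same reason as for $G$, both being inner forms of $\mathrm{Res}_{F/\Q}(\GL_2/F)$ with the same centre. Assembling the isotypic components yields the three isomorphisms, with the $G'(\Q_p)$-, $G_\q$- and $\T'(V^p)$-equivariance automatic, the $G_\q$-action on the classical cohomology being accounted for on the completed side by the Tate twists $E(n)$.
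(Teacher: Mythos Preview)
Your proposal is correct and follows essentially the same approach as the paper, which simply states that the theorem is obtained by ``applying the same arguments to $G'$'' and invokes the classification of irreducible algebraic representations of $G'$ factoring through $(G')^c$. You have spelled out in detail what those arguments are, including the helpful observation that for $G'$ the module $\widetilde{\check{Y}}_\q$ coincides with the completed cohomology of the special fibre (since $H^1(\Mi'(V)\otimes\overline{k_\q},r_*r^*\FFk)=0$), which slightly streamlines the adaptation relative to the $G$ case.
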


\subsection{Cofreeness results}
We now give analogues of Corollary 5.3.19 in \cite{Emlg}. 
\begin{proposition}\label{prop:injsmooth}
Suppose $U_p$ is small enough so that $U_p$ is pro-$p$ and $U_p\up$ is neat. Then for each $s > 0$, 
\begin{itemize}\item$H^1_D(\up,\OO_\pf/\varpi^s)_{\rhobar}$, \item$H^1_\red(\up,\OO_\pf/\varpi^s)_{\rhobar}$, \item$H^1_\red(\up,r_*r^* \OO_\pf/\varpi^s)_{\rhobar}$,\end{itemize} are injective as smooth representations of $\ql$ over $\OO_\pf/\varpi^s$.
\end{proposition}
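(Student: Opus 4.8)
The plan is to follow the proof of \cite[Corollary 5.3.19]{Emlg}: one combines the standard cohomological criterion for injectivity of smooth representations with a Hochschild--Serre spectral sequence for the \'etale tower at $p$, exploiting --- just as in the proof of the preceding proposition --- that this spectral sequence is equally available for the \'etale cohomology of the special fibres and of their normalisations. The criterion in question is that, for a torsion-free compact $p$-adic analytic group $H$ and the Artinian ring $\OO_\pf/\varpi^s$, a smooth $\OO_\pf/\varpi^s[H]$-module is injective in the category of smooth representations precisely when its continuous cohomology vanishes in all positive degrees for every open subgroup of $H$; this applies to $H = \ql$, which is pro-$p$ and torsion-free because $U_p$ is pro-$p$ and $U_p\up$ is neat. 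Writing $H^1_{(\ast)}$ for any one of $H^1_D(\up,-)$, $H^1_\red(\up,-)$ and $H^1_\red(\up, r_*r^*-)$, it therefore suffices to show that $H^i(\ql', H^1_{(\ast)}(\up,\OO_\pf/\varpi^s)_\m) = 0$ for all $i \ge 1$ and every open subgroup $\ql' = U_p'/\overline{F^\times \cap U_pU^p} \subseteq \ql$, where $U_p' \subseteq U_p$ is the preimage of $\ql'$.

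Neatness of $U_p\up$ guarantees that $G(\Q)$, and in particular its centre, acts freely on the relevant adelic space modulo the part lying in the level, so that $\varprojlim_{U_p''}M(U_p''\up)_{\overline{F}}$ is a Galois pro-\'etale cover of $M(U_p'\up)_{\overline{F}}$ with group $\ql'$. Since the degeneracy maps at $p$ are finite \'etale on Jarvis's integral models (as $p$ is prime to $\q$), the same holds for the towers $\varprojlim_{U_p''}\Mi(U_p''\up)\otimes\overline{k_\q}$ and $\varprojlim_{U_p''}\widetilde{\Mi(U_p''\up)\otimes\overline{k_\q}}$ over $\Mi(U_p'\up)\otimes\overline{k_\q}$ and over its normalisation. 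The Hochschild--Serre spectral sequence of such a cover (as invoked in \cite[Corollary 2.2.18]{Emint} and \cite[Proposition 2.4.1]{Emint}) takes the form
$$E_2^{i,j} = H^i\bigl(\ql', H^j_{(\ast)}(\up,\OO_\pf/\varpi^s)\bigr) \Longrightarrow H^{i+j}\bigl(M(U_p'\up)_{\overline{F}},\OO_\pf/\varpi^s\bigr),$$
and likewise with $M(U_p'\up)_{\overline{F}}$ replaced by $\Mi(U_p'\up)\otimes\overline{k_\q}$, resp.\ by its normalisation.

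Now localise at $\m$. As $\m$ is non-Eisenstein, $H^0_{(\ast)}(\up,\OO_\pf/\varpi^s)_\m$ and $H^2_{(\ast)}(\up,\OO_\pf/\varpi^s)_\m$ both vanish, and so does the degree-$2$ cohomology of each of the finite-level curves occurring as abutments: the relevant $H^0$ is spanned by the connected components and the relevant $H^2$ by the irreducible components (up to a Tate twist), and on both the unramified Hecke operators act through Eisenstein characters, by the usual computation of the degrees of the Hecke correspondences (cf.\ \cite[Lemma 4]{DT}, \cite[\S 6]{Car}); this remains true for the nodal special fibres and their normalisations. Hence, after localisation at $\m$, the spectral sequence above is concentrated in the single row $j = 1$, so it degenerates and identifies $H^{i-1}(\ql', H^1_{(\ast)}(\up,\OO_\pf/\varpi^s)_\m)$ with the degree-$i$ \'etale cohomology of the corresponding finite-level curve, localised at $\m$, for every $i$. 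For $i \ge 3$ the latter vanishes because a curve over a separably closed field has \'etale cohomological dimension $\le 2$, and for $i = 2$ it vanishes by the Eisenstein argument just recalled; therefore $H^j(\ql', H^1_{(\ast)}(\up,\OO_\pf/\varpi^s)_\m) = 0$ for all $j \ge 1$, and the injectivity follows from the criterion.

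The argument is formal once two inputs are secured, and these are where the real work lies. First, one must identify the deck group of the $p$-tower over a given finite level as precisely $\ql'$ and check that it is torsion-free pro-$p$; this is exactly what the hypotheses that $U_p$ is pro-$p$ and $U_p\up$ is neat are for. Second, one needs the Eisenstein vanishing of $H^0$ and $H^2$ with $\OO_\pf/\varpi^s$-coefficients not merely for the Shimura curves but also for their nodal special fibres and for the normalisations thereof, which requires appealing to the explicit descriptions of the bad reduction in \cite{Car, JarMazPrin, Raj}. Beyond these, everything proceeds exactly as in \cite[Corollary 5.3.19]{Emlg}, and the passage from $H^1_D$ to $H^1_\red$ and $H^1_\red(-,r_*r^*-)$ is free of charge because, as noted above, the Hochschild--Serre formalism applies verbatim to the cohomology of the special fibre.
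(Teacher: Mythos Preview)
Your approach via Hochschild--Serre and a cohomological criterion for injectivity is a natural alternative, but the criterion you state is not correct over $\OO_\pf/\varpi^s$ when $s>1$. A counterexample: take $H=\Z_p$ and $M=C^\infty(\Z_p,\OO_\pf/\varpi)$, viewed as a smooth $\OO_\pf/\varpi^s[H]$-module via the surjection $\OO_\pf/\varpi^s\twoheadrightarrow\OO_\pf/\varpi$. By Shapiro's lemma $H^i(H',M)=0$ for every $i\ge1$ and every open $H'\le H$, yet $M$ is not injective in smooth $\OO_\pf/\varpi^s[H]$-modules, since the forgetful functor to $\OO_\pf/\varpi^s$-modules preserves injectives (its left adjoint is exact) and $M$ is an $\OO_\pf/\varpi$-vector space, hence not $\OO_\pf/\varpi^s$-injective. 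The correct criterion requires, in addition to the vanishing of $H^i(H',M)$, that each $M^{H'}$ be injective (equivalently free) over $\OO_\pf/\varpi^s$: one sees this from the Grothendieck spectral sequence $\mathrm{Ext}^p_{\OO_\pf/\varpi^s}(k,H^q(H,M))\Rightarrow\mathrm{Ext}^{p+q}_\mathcal{C}(k,M)$, where $k$ is the residue field and $\mathcal{C}$ the category of smooth representations. In the situation of the proposition this missing hypothesis does hold --- the non-Eisenstein localisation makes $H^1_{(\ast)}(M(U_p'\up),\OO_\pf/\varpi^s)_\m$ free over $\OO_\pf/\varpi^s$ at every finite level --- but you have to verify it, and doing so is essentially as much work as the paper's argument. (Incidentally, the reference you want in \cite{Emlg} is Proposition~5.3.15; Corollary~5.3.19 is the subsequent cofreeness statement.)

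The paper takes a more direct route that avoids the criterion altogether: for any finitely generated smooth $\overline{U_p}$-representation $L$ one identifies $\Hom_{\overline{U_p}}(L,H^1_{(\ast)}(\up,\OO_\pf/\varpi^s)_\m)$ with the $\m$-localisation of $H^1$ of the finite-level curve with coefficients in the local system $\mathscr{L}^\vee$ attached to $L^\vee$. A short exact sequence of $L$'s then gives a short exact sequence of sheaves, and the Eisenstein vanishing of the localised $H^0$ and $H^2$ (which is exactly the input you use) turns the long exact sequence into exactness of $\Hom_{\overline{U_p}}(-,H^1_{(\ast)})$. This packages both the group-cohomology vanishing and the $\OO_\pf/\varpi^s$-freeness into a single step.
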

In the statement of the proposition, we write $\ql$ to denote the quotient of $U_p$ by the closure in $U_p$ of the projection to $U_p$ of $F^\times \cap U_pU^p$.
\begin{proof}
We follow the proof of Proposition 5.3.15 in \cite{Emlg}, with modifications due to the presence of infinitely many global units $\OO_F^\times$. Let $L$ be any finitely generated smooth representation of $\ql=U_p/\overline{F^\times\cap U_pU^p(\q)}$ over $\OO_\pf/\varpi^s$, with Pontryagin dual $L^\vee := \Hom_{\OO_\pf/\varpi^s}(L,\OO_\pf/\varpi^s)$. For brevity we denote $\ql$ by $\overline{U}_p$. There is an induced local system $\mathscr{L}^\vee$ on each of the Shimura curves $M(U'_p\up)$ as $U'_p$ varies over the open normal subgroups of $U_p$. As in Proposition 5.3.15 of \cite{Emlg} there is a natural isomorphism $$H^1(M(U_p\up),\mathscr{L}^\vee)_{\rhobar} \cong \Hom_{\overline{U}_p}(L,H^1_D(\up,\OO_\pf/\varpi^s)_{\rhobar}).$$
Now starting from a short exact sequence $0\rightarrow L_0 \rightarrow L_1 \rightarrow L_2\rightarrow 0$ of finitely generated smooth $\overline{U}_p$-representations over $\OO_\pf/\varpi^s$ we obtain a short exact sequence of sheaves on $M(U_p\up)$:
$$\minCDarrowwidth10pt\begin{CD}0 @>>> \mathscr{L}^\vee_2 @>>> \mathscr{L}^\vee_1 @>>> \mathscr{L}^\vee_0 @>>> 0.\end{CD}$$
Taking the associated long exact cohomology sequence and localising at $\m$ gives another short exact sequence $$\minCDarrowwidth10pt\begin{CD}0 @>>> H^1(M(U_p\up),\mathscr{L}^\vee_2)_{\rhobar} @>>> H^1(M(U_p\up),\mathscr{L}^\vee_1)_{\rhobar} \\@.@>>> H^1(M(U_p\up),\mathscr{L}^\vee_0)_{\rhobar} @>>> 0,\end{CD}$$ or equivalently \small$$\minCDarrowwidth10pt\begin{CD}0 @>>> \Hom_{\overline{U}_p}(L_2,H^1_D(\up,\OO_\pf/\varpi^s)_{\rhobar}) @>>> \Hom_{\overline{U}_p}(L_1,H^1_D(\up,\OO_\pf/\varpi^s)_{\rhobar}) \\ @.@>>>\Hom_{\overline{U}_p}(L_0,H^1_D(\up,\OO_\pf/\varpi^s)_{\rhobar}) @>>> 0.\end{CD}$$\normalsize
Therefore we conclude that $H^1_D(\up,\OO_\pf/\varpi^s)_{\rhobar}$ is injective as a smooth representation of $\overline{U}_p$ over $\OO_\pf/\varpi^s$ (any smooth representation is a direct limit of finitely generated smooth representations, so exactness of a functor on the latter implies exactness on the former). The same argument applies to the other cohomology spaces (using Lemma \ref{oldred} for the final case).
\end{proof}
\begin{definition}
We say that a topological representation $V$ of a topological group $\Gamma$ over $\OO_\pf$ is \emph{cofree} if there is a topological isomorphism of representations $V \cong \mathscr{C}(\Gamma,\OO_\pf)^r$ for some integer $r$, where $\mathscr{C}(\Gamma,\OO_\pf)$ denotes the space of continuous functions from $\Gamma$ to $\OO_\pf$ with the right regular action of $\Gamma$.
\end{definition}
\begin{corollary}\label{cor:cofree1}
If $U_p$ is small enough so that $U_p$ is pro-$p$ and $U_p\up$ is neat, then $\widetilde{H}^1_D(\up,\OO_\pf)_{\rhobar}$, $\widetilde{H}^1_\red(\up,\OO_\pf)_{\rhobar}$ and $\widetilde{H}^1_\red(\up,r_*r^*\OO_\pf)_{\rhobar}$ are all cofree representations of $\ql$.
\end{corollary}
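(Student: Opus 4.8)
The plan is to deduce cofreeness over $\ql$ from the injectivity statement of Proposition \ref{prop:injsmooth} by passing to the $\varpi$-adic limit, exactly along the lines of Corollary 5.3.19 of \cite{Emlg}. The key input is the standard fact that a smooth injective representation of a compact $p$-adic analytic group $\Gamma$ over $\OO_\pf/\varpi^s$, if it is moreover \emph{admissible} (equivalently, its Pontryagin dual is finitely generated over $\OO_\pf/\varpi^s\llbracket \Gamma\rrbracket$), is of the form $\mathscr{C}(\Gamma,\OO_\pf/\varpi^s)^{r_s}$ for some $r_s \geq 0$; dually, an admissible projective $\OO_\pf/\varpi^s\llbracket\Gamma\rrbracket$-module is free of finite rank. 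So the first step is to record that each of $H^1_D(\up,\OO_\pf/\varpi^s)_\m$, $H^1_\red(\up,\OO_\pf/\varpi^s)_\m$, $H^1_\red(\up,r_*r^*\OO_\pf/\varpi^s)_\m$ is admissible as a smooth $\ql$-representation (this follows from the admissibility results already invoked for completed cohomology, or directly since these are finitely generated over $\OO_\pf/\varpi^s$ after taking $U_p'$-invariants), so that Proposition \ref{prop:injsmooth} upgrades to: each is isomorphic to $\mathscr{C}(\ql,\OO_\pf/\varpi^s)^{r_s}$ for some $r_s$.

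The second step is to check that the rank $r_s$ is independent of $s$. For this one compares mod $\varpi$: reduction gives a surjection of the level-$\varpi^{s}$ object onto the level-$\varpi$ object (via the Bockstein / multiplication-by-$\varpi^{s-1}$ and reduction maps coming from $0 \to \OO_\pf/\varpi \to \OO_\pf/\varpi^s \to \OO_\pf/\varpi^{s-1}\to 0$ applied to the sheaves), and one shows $r_s = r_1$ by counting $\ql$-coinvariants of the Pontryagin duals, or equivalently by noting $\mathscr{C}(\ql,\OO_\pf/\varpi^s)/\varpi = \mathscr{C}(\ql,\OO_\pf/\varpi)$ and that the cohomology long exact sequences degenerate after localising at $\m$ because $H^0$ and $H^2$ vanish there (the $H^2$ vanishing for $M(U_pU^p(\q))$, and the corresponding statement for the special-fibre cohomologies, both hold for dimension reasons and the non-Eisenstein hypothesis). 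Hence all three systems are, compatibly in $s$, of the shape $\mathscr{C}(\ql,\OO_\pf/\varpi^s)^{r}$ with a fixed $r$.

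The third step is simply to pass to the inverse limit over $s$: by Definition \ref{completedstuff} (and its evident analogue for $\widetilde{H}^1_D$ and $\widetilde{H}^1_\red$), the completed cohomology spaces are $\varprojlim_s$ of their mod-$\varpi^s$ truncations, and $\varprojlim_s \mathscr{C}(\ql,\OO_\pf/\varpi^s)^{r} = \mathscr{C}(\ql,\OO_\pf)^{r}$ as topological $\ql$-representations; the transition maps being the reduction maps makes the identification $\ql$-equivariant. This gives the desired topological isomorphisms $\widetilde{H}^1_D(\up,\OO_\pf)_\m \cong \mathscr{C}(\ql,\OO_\pf)^r$, and likewise for $\widetilde{H}^1_\red(\up,\OO_\pf)_\m$ and $\widetilde{H}^1_\red(\up,r_*r^*\OO_\pf)_\m$ (with possibly different ranks in each case), i.e. all three are cofree over $\ql$.

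The main obstacle is the first step: one must be careful that the quotient group $\ql = U_p/\overline{F^\times \cap U_p U^p}$ — rather than $U_p$ itself — is the correct group acting freely, because of the infinitely many global units $\OO_F^\times$, and that the identification $H^1(M(U_p\up),\mathscr{L}^\vee)_\m \cong \Hom_{\ql}(L, H^1_D(\up,\OO_\pf/\varpi^s)_\m)$ from the proof of Proposition \ref{prop:injsmooth} genuinely exhibits the dual as a module over $\OO_\pf/\varpi^s\llbracket\ql\rrbracket$ and not merely over $\OO_\pf/\varpi^s\llbracket U_p\rrbracket$; this is exactly the point where neatness of $U_p\up$ and the pro-$p$ hypothesis on $U_p$ are used, so that the covers $M(U_p'\up) \to M(U_p\up)$ are genuinely Galois with group a quotient of $\ql$. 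Once the correct group is in place, the injective-equals-cofree dictionary and the limit argument are formal.
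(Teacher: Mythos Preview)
Your proposal is correct and follows essentially the same approach as the paper: the paper's entire proof is the one-line deferral ``The proof of Corollary 5.3.19 in \cite{Emlg} goes through,'' and what you have written is precisely an unpacking of that argument (injective plus admissible over $\OO_\pf/\varpi^s[[\ql]]$ implies free Pontryagin dual, rank independence of $s$ via the Bockstein sequence and vanishing of $H^0_\m$, $H^2_\m$, then pass to the inverse limit). Your care about working over $\ql$ rather than $U_p$ is exactly the adaptation needed for the totally real case.
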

\begin{proof}
The proof is as for Corollary 5.3.19 in \cite{Emlg}. We will give the proof just for $\widetilde{H}^1_D(\up,\OO_\pf)_{\rhobar}$, the other cases being proved in exactly the same way. We again write $\overline{U}_p$ for $\ql$. It suffices to show that there is an isomorphism of smooth representations of $\overline{U}_p$ over $\OO_\pf$
$$\widetilde{H}^1_D(\up,\OO_\pf)_{\rhobar}/\varpi^s \widetilde{H}^1_D(\up,\OO_\pf)_{\rhobar} \cong \mathscr{C}(\overline{U}_p ,\OO_\pf/\varpi^s\OO_\pf)^r,$$ for some $r > 0$ and each $s > 0$ (since then $r$ is independent of $s$ and we may pass to the projective limit in $s$). Since $U_p$ is pro-$p$, the quotient $\overline{U}_p$ is pro-$p$. It follows that the completed group ring $(\OO_\pf/\varpi^s\OO_\pf)[[\overline{U}_p]]$ is a non-commutative local ring. Hence a non-zero finitely generated projective  $(\OO_\pf/\varpi^s\OO_\pf)[[\overline{U}_p]]$-module is in fact a free module. Dualising, if a smooth admissible $\overline{U}_p$-representation over $\OO_\pf/\varpi^s\OO_\pf$ is injective as a smooth representation, it is isomorphic to $\mathscr{C}(\overline{U}_p ,\OO_\pf/\varpi^s\OO_\pf)^r$ for some $r > 0$. Hence it suffices to show that $\widetilde{H}^1_D(\up,\OO_\pf)_{\rhobar}/\varpi^s \widetilde{H}^1_D(\up,\OO_\pf)_{\rhobar}$ is injective as a smooth representation of $\overline{U}_p$ over $\OO_\pf/\varpi^s\OO_\pf$. Since $\m$ is non-Eisenstein this is equivalent to injectivity of $H^1_D(\up,\OO_\pf/\varpi^s)_{\rhobar}$, which follows from Proposition \ref{prop:injsmooth}.
\end{proof}
\begin{corollary}\label{cor:cofree2}
If $U_p$ is small enough so that $U_p$ is pro-$p$ and $U_p\up$ is neat, then $\widetilde{X}_\q(\up,\OO_\pf)_{\rhobar}$ is a cofree representation of $\ql$.
\end{corollary}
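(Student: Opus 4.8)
The plan is to bootstrap cofreeness of $\widetilde{X}_\q(\up,\OO_\pf)_\m$ from Corollary \ref{cor:cofree1}, via the first short exact sequence of Proposition \ref{prop:es}. Taking $k = (2,\dots,2)$ there, so that $\FFk = \OO_\pf$, we have a $\ql$-equivariant short exact sequence of $\varpi$-adically admissible representations
\[
0 \longrightarrow \widetilde{H}^1_{\red}(\up,\OO_\pf)_\m \longrightarrow \widetilde{H}^1_D(\up,\OO_\pf)_\m \longrightarrow \widetilde{X}_\q(\up,\OO_\pf)_\m(-1) \longrightarrow 0 ,
\]
whose first two terms are cofree $\ql$-representations by Corollary \ref{cor:cofree1}. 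The Tate twist only alters the $\ql$-action by the restriction of the character $\prod_{v \mid p} N_{F_v/\Q_p} \circ \det$; this descends to $\ql$ because every element of $F^\times \cap U_p U^p$ is a unit at every finite place, hence lies in $\OO_F^\times$, on which the character evaluates to $N_{F/\Q}(z)^2 = 1$. Since tensoring $\mathscr{C}(\ql,\OO_\pf)$ by a continuous $\OO_\pf^\times$-valued character of $\ql$ gives back $\mathscr{C}(\ql,\OO_\pf)$, this twist does not affect cofreeness, and it suffices to show that $\widetilde{X}_\q(\up,\OO_\pf)_\m(-1)$ is cofree.

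Next I would pass to duals over the Iwasawa algebra $R := \OO_\pf[[\ql]]$. Since $\ql$ is a compact $p$-adic analytic group (a quotient of the compact open subgroup $U_p$), the functor $V \mapsto V^\vee$ is an exact anti-equivalence between $\varpi$-adically admissible $\OO_\pf[[\ql]]$-representations and finitely generated $R$-modules, and it carries cofree representations precisely to finite free $R$-modules (as in \cite[\S 5.3]{Emlg} and \cite{Emordone}). Dualising the sequence above yields a short exact sequence of finitely generated $R$-modules
\[
0 \longrightarrow \widetilde{X}_\q(\up,\OO_\pf)_\m(-1)^\vee \longrightarrow \widetilde{H}^1_D(\up,\OO_\pf)_\m^\vee \longrightarrow \widetilde{H}^1_{\red}(\up,\OO_\pf)_\m^\vee \longrightarrow 0
\]
in which the last two terms are finite free over $R$. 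As the right-hand term is projective, the sequence splits, so $\widetilde{X}_\q(\up,\OO_\pf)_\m(-1)^\vee$ is a direct summand of a finite free $R$-module, hence finitely generated projective.

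To conclude I would use that $R = \OO_\pf[[\ql]]$ is a (in general noncommutative) local ring: since $U_p$, and therefore its quotient $\ql$, is pro-$p$, and $\OO_\pf$ is local with residue characteristic $p$, the kernel of $R \twoheadrightarrow \OO_\pf/\varpi$ is the unique maximal left ideal (equivalently, the unique maximal right ideal). Over such a ring a finitely generated projective module is free, by the usual argument (lift a basis of $M/\mathfrak{m}M$ to $M$, apply Nakayama to get a surjection from a free module, and split it using projectivity). Hence $\widetilde{X}_\q(\up,\OO_\pf)_\m(-1)^\vee$ is finite free over $R$; dualising back, $\widetilde{X}_\q(\up,\OO_\pf)_\m(-1)$, and therefore $\widetilde{X}_\q(\up,\OO_\pf)_\m$ itself, is cofree as a $\ql$-representation.

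The only step needing real care — the ``main obstacle'' here — is the duality bookkeeping: one must check that $\widetilde{X}_\q(\up,\OO_\pf)_\m(-1)$ genuinely is $\varpi$-adically admissible as a $\ql$-representation (this follows from Proposition \ref{prop:es} together with the fact that $\overline{F^\times \cap U_p U^p}$ acts trivially on the completed cohomology, so the $G(\Q_p)$-action really does descend to $\ql$), and that the $\varpi$-adic duality functor is exact and identifies cofree representations with finite free Iwasawa modules. Granting that, the argument is exactly the short sequence-chase above; equivalently, one may avoid duals and phrase the whole thing as the elementary claim that a quotient of a cofree $\ql$-representation by a cofree subrepresentation is again cofree.
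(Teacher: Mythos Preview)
Your approach is essentially the paper's own: dualise the first short exact sequence of Proposition~\ref{prop:es} over the local Iwasawa algebra $\OO_\pf[[\ql]]$, use Corollary~\ref{cor:cofree1} to see that the two outer (after dualising) terms are free, deduce the remaining term is projective hence free, and dualise back. One minor correction: the Tate twist $(-1)$ in Proposition~\ref{prop:es} is purely a twist of the $G_\q=\Gal(\overline{L}/L)$-action (see the paragraph introducing~\eqref{speces}), so it does not touch the $\ql$-action at all --- your paragraph arguing for its harmlessness is unnecessary and appears to conflate this twist with the unrelated object $E(1)$ of Section~\ref{localg}.
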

\begin{proof}
Taking the first exact sequence of Proposition \ref{prop:es} and applying the functor \linebreak[4]$\Hom_{cts}(-,\OO_\pf)$ gives a short exact sequence (we can ignore the Tate twist for the purposes of this Corollary)
\begin{align*}0\rightarrow \Hom_{cts}(\widetilde{X}_\q(\up,\OO_\pf)_{\rhobar},\OO_\pf) &\rightarrow \Hom_{cts}(\widetilde{H}^1_D(\up,\OO_\pf)_{\rhobar},\OO_\pf)\\ &\rightarrow \Hom_{cts}(\widetilde{H}^1_{\red}(\up,\OO_\pf)_{\rhobar},\OO_\pf)\rightarrow 0.\end{align*}
Corollary \ref{cor:cofree1} implies that the second and third non-zero terms in this sequence are free $\OO_\pf[[\ql]]$-modules, so $\Hom_{cts}(\widetilde{X}_\q(\up,\OO_\pf)_{\rhobar},\OO_\pf)$ is projective, hence free since $\OO_\pf[[\ql]]$ is local. Applying the functor $\Hom_{cts}(-,\OO_\pf)$ again gives the desired result.
\end{proof}
Similarly we obtain
\begin{corollary}\label{cor:cofree3}
If $V_p$ is small enough so that $V_p$ is pro-$p$ and $V_pV^p$ is neat, then \begin{itemize}\item $\widetilde{H}^1_{D'}(V^p,\OO_\pf)_{\rhobar'}$, \item $\widetilde{Y}_\q(V^p,\OO_\pf)_{\rhobar'}$, \item $\widetilde{\check{Y}}_\q(V^p,\OO_\pf)_{\rhobar'}$,\end{itemize} are cofree representations of $\qlV$.
\end{corollary}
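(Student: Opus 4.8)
The plan is to run the arguments of Corollaries \ref{cor:cofree1} and \ref{cor:cofree2} verbatim, with $G'$ in place of $G$; the only ingredient that must be re-established is the $G'$-analogue of Proposition \ref{prop:injsmooth}.

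So first I would prove: for $V_p$ small enough that $V_p$ is pro-$p$ and $V_pV^p$ is neat, and for every $s>0$, the smooth $\qlV$-representations $H^1_{D'}(V^p,\OO_\pf/\varpi^s)_{\m'}$ and $H^1(\Mi'(V_pV^p)\otimes\overline{k_\q},\OO_\pf/\varpi^s)_{\m'}$ are injective over $\OO_\pf/\varpi^s$. (Note that the latter space is $\check Y_\q(V^p,\OO_\pf/\varpi^s)_{\m'}$, since the irreducible components of the special fibre of $\Mi'$ are rational curves, so $\check Y_\q\cong H^1$ of the special fibre, cf.\ section~3 of \cite{Raj}.) The proof should be line-for-line that of Proposition \ref{prop:injsmooth}: given a finitely generated smooth $\qlV$-representation $L$ over $\OO_\pf/\varpi^s$ with Pontryagin dual $L^\vee$, one forms the associated local systems $\mathscr{L}^\vee$ on the curves $M'(V'_pV^p)$ and on the special fibres of the integral models $\Mi'(V'_pV^p)$ (these special fibres vary compatibly in $V'_p$ by Varshavsky's $\q$-adic uniformisation), obtains natural isomorphisms $H^1(M'(V_pV^p),\mathscr{L}^\vee)_{\m'}\cong\Hom_{\qlV}(L,H^1_{D'}(V^p,\OO_\pf/\varpi^s)_{\m'})$ and its special-fibre counterpart via the Hochschild--Serre spectral sequence for \'etale cohomology, and then uses the long exact cohomology sequence, together with the fact that $\m'$ is non-Eisenstein (which kills the $H^0$ and $H^2$ terms), to conclude that $\Hom_{\qlV}(-,H^1_{D'}(V^p,\OO_\pf/\varpi^s)_{\m'})$ and its special-fibre analogue are exact.

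Given this, the argument of Corollary 5.3.19 in \cite{Emlg} invoked in Corollary \ref{cor:cofree1} shows that $\widetilde{H}^1_{D'}(V^p,\OO_\pf)_{\m'}$ and $\widetilde{\check{Y}}_\q(V^p,\OO_\pf)_{\m'}$ are cofree over $\qlV$, i.e.\ their continuous $\OO_\pf$-duals are free $\OO_\pf[[\qlV]]$-modules. To handle $\widetilde{Y}_\q(V^p,\OO_\pf)_{\m'}$ I would copy the proof of Corollary \ref{cor:cofree2}: applying $\Hom_{cts}(-,\OO_\pf)$ to the short exact sequence of Proposition \ref{prop:Yses} (ignoring the Tate twist, which is irrelevant here) yields
\begin{align*}
0\to \Hom_{cts}(\widetilde{Y}_\q(V^p,\OO_\pf)_{\m'},\OO_\pf) &\to \Hom_{cts}(\widetilde{H}^1_{D'}(V^p,\OO_\pf)_{\m'},\OO_\pf)\\
&\to \Hom_{cts}(\widetilde{\check{Y}}_\q(V^p,\OO_\pf)_{\m'},\OO_\pf)\to 0,
\end{align*}
whose middle and right terms are free $\OO_\pf[[\qlV]]$-modules; the quotient being projective the sequence splits, so the left term is a direct summand of a free module, hence projective, hence free since $\OO_\pf[[\qlV]]$ is local. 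Dualising once more gives that $\widetilde{Y}_\q(V^p,\OO_\pf)_{\m'}$ is cofree.

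The hard part, such as it is, will be the special-fibre half of the $G'$-analogue of Proposition \ref{prop:injsmooth}: one has to be sure that the reduced (special-fibre) \'etale cohomology of the bad-reduction integral models $\Mi'(V'_pV^p)$ supports the same $\qlV$-equivariant covering-space/local-system formalism as the generic fibre, so that the $\mathscr{L}^\vee$-cohomology of the special fibre really computes $\Hom_{\qlV}(L,\check Y_\q(V^p,\OO_\pf/\varpi^s)_{\m'})$. This is exactly what is supplied by the compatibility of Varshavsky's integral models under the degeneracy maps together with the Hochschild--Serre spectral sequence, and the same point is already used (for $G$) in the proof of Proposition \ref{prop:injsmooth}; so I expect no genuinely new geometric input to be required, only care in transcribing that argument to the quaternion algebra $D'$.
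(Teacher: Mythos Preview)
Your proposal is correct and is exactly what the paper intends by ``Similarly we obtain'': one transports Proposition~\ref{prop:injsmooth} to $G'$ (using that $\check{Y}_\q$ coincides with $H^1$ of the special fibre since the components of $\Mi'\otimes\overline{k_\q}$ are rational) to get cofreeness of $\widetilde{H}^1_{D'}$ and $\widetilde{\check{Y}}_\q$, and then runs the short-exact-sequence/dualisation argument of Corollary~\ref{cor:cofree2} on Proposition~\ref{prop:Yses} to obtain cofreeness of $\widetilde{Y}_\q$.
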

\subsection{Jacquet-Langlands maps between eigenvarieties}
We can now apply the results of section \ref{pJL} to deduce some cases of \emph{overconvergent} Jacquet-Langlands functoriality, or in other words maps between eigenvarieties interpolating the classical Jacquet-Langlands correspondence. Let $\widehat{T}$ denote the rigid analytic variety parameterising continuous characters $\chi: T \rightarrow \C_p^\times$.
\begin{definition}\label{def:evar}
Suppose $V$ is a $\varpi$-adically admissible $G(\Q_p)$-representation over $\OO_\pf$, with a commuting $\OO_\pf$-linear action of a commutative Noetherian $\OO_\pf$-algebra $\mathbf{A}$. The essentially admissible locally analytic $T$-representation $J_B(V_E)$ gives rise (by duality) to a coherent sheaf $\mathscr{M}$ on $\widehat{T}$, and the action of $\mathbf{A}$ gives rise to a coherent sheaf on $\widehat{T}$ of $\OO_\pf$-algebras $\mathscr{A} \hookrightarrow End_{\widehat{T}}(\mathscr{M})$.
Define the \emph{eigenvariety} $$\mathscr{E}(V,\mathbf{A})$$ to be the rigid analytic space given by taking the relative spectrum of $\mathscr{A}$ over $\widehat{T}$.
\end{definition}

\begin{lemma}
Suppose $V$ and $\mathbf{A}$ are as in Definition \ref{def:evar}, further suppose that $V$ is a (non-zero) cofree representation of $U_p/X$ for some compact open subgroup $U_p$ of $G(\Q_p)$, $X$ some closed subgroup of $Z(\Q_p) \cap U_p$. Denote by $T_0$ the intersection $T \cap U_p$. Then $\mathscr{E}(V,\mathbf{A})$ is equidimensional of dimension equal to the dimension of the rigid analytic variety $\widehat{T_0/X}$ parameterising continuous characters of $T_0$ which are trivial on $X$.
\end{lemma}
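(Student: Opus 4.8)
The plan is to follow the argument of \cite[\S5]{Emlg}: the cofreeness hypothesis will be used to show that the coherent sheaf $\mathscr M$ on $\widehat T$ dual to $J_B(V_E^{an})$, together with the $\mathbf A$-action on it, is ``cofree'' over the appropriate weight space, after which equidimensionality is a formality.

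First I would record two elementary points. Since $X\subseteq Z(\Q_p)\cap U_p\subseteq T\cap U_p=T_0$ and the $U_p$-action on $V$ is by hypothesis inflated from $U_p/X$, the subgroup $X$ acts trivially on $V$, hence on $V_E^{an}$, hence on $J_B(V_E^{an})$. Consequently every continuous character of $T$ in the support of $\mathscr M$ is trivial on $X$, so the composite of $\mathscr E(V,\mathbf A)\to\widehat T$ with the restriction map $\widehat T\to\widehat{T_0}$ has image inside the closed subspace $\widehat{T_0/X}\hookrightarrow\widehat{T_0}$; call the resulting map $\kappa\colon\mathscr E(V,\mathbf A)\to\widehat{T_0/X}$. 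Secondly, $\widehat{T_0/X}$ parametrises the continuous characters of the compact $p$-adic group $T_0/X$, so (after a finite extension of $E$ splitting its torsion) it is a finite disjoint union of open polydiscs; in particular it is quasi-Stein, regular, and equidimensional of dimension $\dim_{\Q_p}(T_0/X)=\dim_{\Q_p}T_0-\dim_{\Q_p}X$. Finally, cofreeness says precisely that $V^\vee:=\Hom_{\mathrm{cts}}(V,\OO_\pf)$ is free of rank $r$ over the Iwasawa algebra $\OO_\pf[[U_p/X]]$, hence free over the sub-Iwasawa-algebra $\OO_\pf[[T_0/X]]$ (we may first shrink $U_p$, keeping $X\subseteq U_p$ and an Iwahori factorisation, without disturbing cofreeness).

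The substantive step is to propagate this freeness through the Jacquet functor. Running Emerton's eigenvariety construction with $V$ in place of completed cohomology --- that is, combining the essential admissibility of $J_B(V_E^{an})$ as a $T$-representation (\cite{MR2292633}) with the Fredholm/Riesz theory for the ``$U_p$-operators'' coming from the contracting part of the $T$-action --- produces a spectral variety $\mathscr Z\subseteq\widehat T$, equidimensional of dimension $\dim\widehat{T_0/X}$ with $0$-dimensional fibres over $\widehat{T_0/X}$ via $\kappa$ and Cohen--Macaulay (a Fredholm hypersurface, resp.\ a complete intersection of Fredholm type, inside a regular space), over which $\mathscr M$ is the pushforward of a \emph{locally free} coherent $\OO_{\mathscr Z}$-module of finite rank. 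It is exactly the freeness of $V^\vee$ over $\OO_\pf[[T_0/X]]$ that makes the relevant family of Fredholm determinants ``orthonormalisable'' and forces local freeness of $\mathscr M$ over $\OO_{\mathscr Z}$ rather than mere coherence. I expect this to be the main obstacle: it is the point at which all the real content of \cite[\S5]{Emlg} is used, and one must check that the arguments there, written for completed cohomology, go through for an abstract cofree $V$.

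Granting this, the conclusion is pure commutative algebra. Because $\mathscr M$ is supported on $\mathscr Z$ and is an $\OO_{\mathscr Z}$-module, the inclusion $\mathscr A\hookrightarrow\End_{\widehat T}(\mathscr M)$ of Definition \ref{def:evar} is an inclusion of coherent $\OO_{\mathscr Z}$-algebras $\mathscr A\hookrightarrow\End_{\OO_{\mathscr Z}}(\mathscr M)$, and $\mathscr E(V,\mathbf A)=\rSpec_{\widehat T}(\mathscr A)=\rSpec_{\mathscr Z}(\mathscr A)$. Since $\mathscr M$ is locally free of finite rank over $\OO_{\mathscr Z}$, its endomorphism sheaf is again locally free over $\OO_{\mathscr Z}$, so $\mathscr A$ is torsion free as an $\OO_{\mathscr Z}$-module. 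As $\mathscr Z$ is Cohen--Macaulay --- hence without embedded components --- and equidimensional of dimension $\dim\widehat{T_0/X}$, a finite torsion-free algebra over $\OO_{\mathscr Z}$ has each of its minimal primes contracting to a minimal prime of $\OO_{\mathscr Z}$; therefore $\mathscr E(V,\mathbf A)=\rSpec_{\mathscr Z}(\mathscr A)$ is equidimensional of dimension $\dim\mathscr Z=\dim\widehat{T_0/X}$, as required. (If $J_B(V_E^{an})=0$ then $\mathscr E(V,\mathbf A)=\varnothing$ and the statement is vacuous; otherwise $\mathscr M\neq 0$ and $\mathscr E(V,\mathbf A)\neq\varnothing$.)
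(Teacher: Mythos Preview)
Your proposal is correct and follows essentially the same strategy as the paper's own proof: cofreeness of $V$ is translated into the statement that $V_E^{an}\cong\mathscr C^{an}(U_p/X,E)^r$, so that the dual of $J_B(V_E^{an})$ becomes, over each affinoid in $\widehat{T_0/X}$, the finite-slope part of an orthonormalisable Banach module with respect to a compact operator, and equidimensionality then follows from the standard Fredholm/spectral variety machinery. The paper packages this by citing \cite[Proposition 4.2.36]{MR2292633} and \cite[Corollary 4.1]{cclr} rather than \cite[\S5]{Emlg}, and does not spell out the Cohen--Macaulay/torsion-free commutative-algebra argument you give at the end, but the substance is the same.
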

\begin{proof}
This is a combination of a mild generalisation of the proof of \cite[Proposition 4.2.36]{MR2292633} and \cite[Corollary 4.1]{cclr}. We will outline the argument. The cofreeness assumption on $V$ implies that $V_E^{an}$ is isomorphic to $\mathscr{C}^{an}(U_p/X,E)^r$, where $\mathscr{C}^{an}(U_p/X,E)$ denotes the space of locally analytic functions from $U_p/X$ to $E$. We write $M$ for the strong dual of $J_B(V_E)$, which is equal to the space of global sections $\mathscr{M}(\widehat{T})$. $M$ is a module for the Fr\'echet algebra of locally analytic functions $\mathscr{C}^{an}(\widehat{T_0/X},E)$. We then write $\widehat{T_0/X}$ as a union of admissible affinoid subdomains $\mathrm{MaxSpec}(A_n)$ and apply the argument of \cite[Corollary 4.1]{cclr} to $M\widehat{\otimes}_{\mathscr{C}^{an}(\widehat{T_0/X},E)} A_n$ which is the finite slope part of an orthonormalisable Banach $A_n$-module with respect to some compact operator.
\end{proof}
In our applications we will have $X$ equal to the closure in $U_p$ of the $p$-factor of $F^\times \cap U_pU^p$ for some tame level $U^p$. The dimension of $\widehat{T_0/X}$ will therefore depend on the defect in Leopoldt's conjecture for $F$ and $p$. To be precise, it will equal $$\dim(\widehat{T_0})- (d-1 - \delta)=d+1+\delta,$$ where $\delta$ is the defect in Leopoldt's conjecture.
Applying the above lemma to the cofreeness results of the previous section, we have the following:
\begin{corollary}\label{lotsequidim}
The following eigenvarieties are all equidimensional of dimension equal to $d+1+\delta$:
\begin{itemize}
\item $\mathscr{E}(\widetilde{H}^1_D(U^p,\OO_\pf)_{\rhobar},\T(U^p)_{\rhobar})$
\item $\mathscr{E}(\widetilde{X}_\q(\up,\OO_\pf)_{\rhobar},\T(\up)_{\rhobar})$
\item $\mathscr{E}(\widetilde{H}_{D'}^1(V^p,\OO_\pf)_{\rhobar'},\T'(V^p)_{\rhobar'})$
\item $\mathscr{E}(\widetilde{Y}_\q(V^p,\OO_\pf)_{\rhobar'},\T'(V^p)_{\rhobar'})$
\item $\mathscr{E}(\widetilde{\check{Y}}_\q(V^p,\OO_\pf)_{\rhobar'},\T'(V^p)_{\rhobar'})$
\end{itemize}
\end{corollary}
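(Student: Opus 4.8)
The plan is to apply the lemma immediately preceding this corollary to each of the five modules, taking as input the cofreeness statements of Corollaries \ref{cor:cofree1}, \ref{cor:cofree2} and \ref{cor:cofree3}. First I would fix an auxiliary compact open subgroup $U_p \subset G(\Q_p)$ (respectively $V_p \subset G'(\Q_p)$) which is pro-$p$ and small enough that those corollaries apply, i.e. so that the relevant level subgroups are neat. This causes no loss of generality, since the eigenvariety $\mathscr{E}(-,-)$ of Definition \ref{def:evar} depends only on the module together with its Hecke action and not on the auxiliary choice of $U_p$. For the first module one uses Corollary \ref{cor:cofree1} (applied with tame level $U^p$), for the second Corollary \ref{cor:cofree2}, and for the remaining three Corollary \ref{cor:cofree3}; in each case one concludes that the module is a cofree representation of $U_p/X$ (respectively $V_p/X$), where $X$ is --- as in the proof of Proposition \ref{prop:injsmooth} --- the closure in $U_p$ (respectively $V_p$) of the factor at $p$ of the intersection of $F^\times$ with the ambient level subgroup ($U_pU^p$, $U_p\up$, or $V_pV^p$ as appropriate).

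Next I would check that the hypotheses of the lemma hold. The subgroup $X$ lies in $Z(\Q_p) = (F \otimes_\Q \Q_p)^\times$, embedded in $G(\Q_p)$ as the scalar matrices, and is by construction contained in $U_p$, so $X \subset Z(\Q_p) \cap U_p$ as required. Each of the five modules carries a commuting action of a Noetherian $\OO_\pf$-algebra --- namely $\T(U^p)_\m$, $\T(\up)_\m$ or $\T'(V^p)_{\m'}$, each a localisation of one of the Noetherian Hecke algebras $\T(U^p)$, $\T(\up)$, $\T'(V^p)$ --- and for $\widetilde{X}_\q$, $\widetilde{Y}_\q$ and $\widetilde{\check{Y}}_\q$ this action is the one coming from the Hecke-equivariant short exact sequences of Propositions \ref{prop:es} and \ref{prop:Yses}. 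Finally the modules are non-zero: for $\widetilde{H}^1_D(U^p,\OO_\pf)_\m$ and $\widetilde{H}^1_{D'}(V^p,\OO_\pf)_{\m'}$ this is because $\m$, $\m'$ are maximal ideals of the faithfully acting Hecke algebras, for $\widetilde{Y}_\q$ and $\widetilde{\check{Y}}_\q$ it follows from Proposition \ref{galfacY}, and for $\widetilde{X}_\q(\up,\OO_\pf)_\m$ it holds exactly when the associated space of $\q$-newforms is non-zero (and when it vanishes the eigenvariety is empty, so there is nothing to prove). With these checks in hand the lemma gives that each of the five eigenvarieties is equidimensional of dimension $\dim \widehat{T_0/X}$, where $T_0 = T \cap U_p$ (respectively $T \cap V_p$).

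It then remains to compute $\dim \widehat{T_0/X}$, which is the count already sketched in the discussion preceding the statement. The group $T_0$ is commensurable with $\prod_{v \mid p}(\OO_{F_v}^\times)^2$, so $\dim \widehat{T_0} = 2 \sum_{v \mid p} [F_v : \Q_p] = 2d$; the group $X$ is the closure of a finite-index subgroup of the global unit group $\OO_F^\times$, which has $\Z$-rank $d-1$ by Dirichlet's unit theorem, so $\dim X = d - 1 - \delta$ where $\delta$ is the Leopoldt defect of $F$; and $X$ sits inside $T_0$ via the diagonal embedding of scalars. Hence $\dim \widehat{T_0/X} = 2d - (d - 1 - \delta) = d + 1 + \delta$, which is the assertion of the corollary.

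I do not expect a real obstacle: all the serious work is in the cofreeness corollaries and the preceding lemma, which are already available. The points that need genuine care are purely bookkeeping: correctly identifying the quotient group $U_p/X$ (respectively $V_p/X$) acting cofreely on each of the five modules, the observation that passing to a smaller auxiliary $U_p$ leaves the eigenvariety unchanged, and the non-vanishing of the modules (so that equidimensionality of dimension $d+1+\delta$ is a statement with content, not merely the vacuous assertion about the empty rigid space).
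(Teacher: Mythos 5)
Your proposal is correct and follows exactly the route the paper takes: the corollary is obtained by applying the preceding lemma to the cofreeness results (Corollaries \ref{cor:cofree1}, \ref{cor:cofree2}, \ref{cor:cofree3}), with $X$ the closure of the $p$-factor of the global units intersected with the level, and the dimension count $\dim\widehat{T_0/X}=2d-(d-1-\delta)=d+1+\delta$ is the one already sketched before the statement. Your extra bookkeeping (non-vanishing of the modules, independence of the auxiliary $U_p$) is sound and only makes explicit what the paper leaves implicit.
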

\begin{remark}
Since we are constructing our eigenvarieties using Hecke algebras deprived of the Hecke operators at places where the tame level is non-trivial, all the above eigenvarieties are reduced.
\end{remark}
\begin{lemma}\label{lnew}
The map $\widetilde{X}_\q(\up,\OO_\pf)_{\rhobar} \rightarrow \widetilde{H}^1_D(\up,\OO_\pf)_{\rhobar}$ induced by composing the monodromy pairing embedding $\widetilde{X}_\q(\up,\OO_\pf)_{\rhobar} \rightarrow \widetilde{\check{X}}_\q(\up,\OO_\pf)_{\rhobar}$ with the embedding $\widetilde{\check{X}}_\q(\up,\OO_\pf)_{\rhobar} \rightarrow \widetilde{H}^1_D(\up,\OO_\pf)_{\rhobar}$ provided by Proposition \ref{prop:es} induces an isomorphism of eigenvarieties
$$\mathscr{E}(\widetilde{X}_\q(\up,\OO_\pf)_{\rhobar},\T(\up)_{\rhobar}) \cong \mathscr{E}(\widetilde{H}^1_D(\up,\OO_\pf)^{\q\mhyphen\mathrm{new}}_{\rhobar},\T(\up)_{\rhobar}).$$
\end{lemma}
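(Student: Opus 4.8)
The plan is to identify the map in the statement with the morphism $\alpha$ of Proposition \ref{prop:Xnew}, and then to push the exact sequence recorded there through Emerton's locally analytic Jacquet functor and the duality with coherent sheaves on $\widehat{T}$. First I would note that, since $\m$ is the (non-Eisenstein) maximal ideal attached to the irreducible $\rhobar$, localising at $\m$ coincides with the $\rhobar$-localisation used in Proposition \ref{prop:Xnew}. By construction the map of the lemma is the composite $\widetilde{X}_\q(\up,\OO_\pf)_\m \xrightarrow{\sim} \widetilde{\check{X}}_\q(\up,\OO_\pf)_\m \hookrightarrow \widetilde{H}^1_\red(\up,\OO_\pf)_\m \hookrightarrow \widetilde{H}^1_D(\up,\OO_\pf)_\m$, which by Proposition \ref{prop:fix} lands in the $\q\mhyphen\mathrm{new}$ part and is exactly the map $\alpha$ fitting into the exact sequence
$$\minCDarrowwidth15pt\begin{CD}0@>>>\widetilde{X}_\q(\up,\OO_\pf)_\m @>\alpha>>\widetilde{H}^1_D(\up,\OO_\pf)^{\q\mhyphen\mathrm{new}}_\m @>>>\widetilde{X}_\q(\up,\OO_\pf)_\m(-1).\end{CD}$$
The crucial observation is that the Tate twist appearing here twists the $G_\q$-action only --- it comes from the twist in the specialisation sequence (\ref{speces}), which is a twist of the $\Gal(\overline{L}/L)$-action --- so that $\widetilde{X}_\q(\up,\OO_\pf)_\m(-1)$ and $\widetilde{X}_\q(\up,\OO_\pf)_\m$ carry the very same $G(\Q_p)$-action and $\T(\up)$-action.

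Next I would apply the exact functor of passing to $E$-coefficients and locally analytic vectors, then the left-exact Jacquet functor $J_B$, then the contravariant exact anti-equivalence between essentially admissible locally analytic $T$-representations and coherent sheaves on $\widehat{T}$, all compatibly with the action of $\T(\up)_\m$. Injectivity of $\alpha$ gives a $\T(\up)_\m$-equivariant surjection of coherent sheaves $\mathscr{M}_\mathrm{new} \twoheadrightarrow \mathscr{M}_X$ on $\widehat{T}$, where $\mathscr{M}_\mathrm{new}$ and $\mathscr{M}_X$ are the sheaves attached respectively to $J_B(\widetilde{H}^1_D(\up,\OO_\pf)^{\q\mhyphen\mathrm{new}}_{\m,E}{}^{an})$ and $J_B(\widetilde{X}_\q(\up,\OO_\pf)_{\m,E}{}^{an})$. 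This yields a surjection of the associated sheaves of Hecke algebras, and hence a closed immersion
$$\iota\colon \mathscr{E}(\widetilde{X}_\q(\up,\OO_\pf)_\m,\T(\up)_\m) \hookrightarrow \mathscr{E}(\widetilde{H}^1_D(\up,\OO_\pf)^{\q\mhyphen\mathrm{new}}_\m,\T(\up)_\m).$$

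It then remains to show $\iota$ is an isomorphism, and I would do this by first checking it is bijective on points. One direction is the closed immersion itself. For the converse, suppose a pair consisting of a system of Hecke eigenvalues $\lambda\colon\T(\up)\to \overline{E}$ and a locally analytic character $\chi$ of $T$ occurs in $J_B$ of the target but not of the source. Applying the exact functor $(-)^{an}$, the left-exact functors $J_B$ and ``$(\lambda,\chi)$-eigenspace'' to the displayed exact sequence, it must then occur in $J_B$ of the cokernel of $\alpha^{an}$; since that cokernel embeds into $\widetilde{X}_\q(\up,\OO_\pf)_\m(-1)^{an}_E$, which by the twist observation equals $\widetilde{X}_\q(\up,\OO_\pf)_\m{}^{an}_E$ as a $T\times\T(\up)$-representation, $(\lambda,\chi)$ occurs in the source as well --- a contradiction. (This is the locally analytic refinement of the fact, already recorded in Proposition \ref{prop:Xnew}, that the two spaces support the same systems of Hecke eigenvalues.) A closed immersion of rigid analytic spaces that is bijective on points is an isomorphism as soon as the target is reduced, so the proof is completed by invoking reducedness of these eigenvarieties.

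I expect this last point to be the main obstacle. The cokernel of $\alpha$ is genuinely non-zero --- it has the same $J_B$-support as $\widetilde{X}_\q(\up,\OO_\pf)_\m$ itself --- so $J_B(\alpha^{an})$ is \emph{not} an isomorphism, and the sheaf-theoretic argument on its own only produces an equality of underlying point sets (equivalently, an equality of the reduced structures). Promoting this to an isomorphism of rigid spaces requires knowing the eigenvarieties in question carry no nilpotents; I would handle this by the standard reducedness argument for eigenvarieties built from completed cohomology (the classical locally algebraic points are very Zariski dense by Corollary \ref{lotsequidim} and equidimensionality, and the structure sheaf is reduced along them), or by citing the relevant reducedness statement in Emerton's work.
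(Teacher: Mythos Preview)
Your approach is exactly the paper's: the paper's entire proof is the single sentence ``This follows from Proposition \ref{prop:Xnew},'' and what you have written is a careful unpacking of that citation --- identifying the map with $\alpha$, using the exact sequence with $\widetilde{X}_\q(\up,\OO_\pf)_\m(-1)$ on the right, and exploiting that the Tate twist is invisible to the $G(\Q_p)\times\T(\up)$-structure.

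Your worry about reducedness is legitimate and is something the paper leaves implicit; one small warning is that you should not appeal to equidimensionality of the $\q$-new eigenvariety here, since that is Corollary \ref{weirdlr} and is proved \emph{using} this lemma. The clean way around this is to note that your annihilator argument actually gives $I_X^2\subset I_{\mathrm{new}}\subset I_X$, so the two eigenvarieties have the same reduction, and then to establish reducedness of the target via Zariski density of classical points in $\mathscr{E}(\widetilde{X}_\q(\up,\OO_\pf)_\m,\T(\up)_\m)$ (which \emph{is} available from Corollary \ref{lotsequidim}) together with semisimplicity of the Hecke action at classical weights --- the density transfers to the target since the underlying reduced spaces coincide.
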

\begin{proof}
This follows from Proposition \ref{prop:Xnew}.
\end{proof}
We have a similar result for the group $G'$.
\begin{lemma}\label{lemmaYH1}
The map $\widetilde{Y}_\q(V^p,\OO_\pf)_{\rhobar'} \rightarrow \widetilde{H}^1_{D'}(V^p,\OO_\pf)_{\rhobar'}$ induced by composing the monodromy pairing embedding $\widetilde{Y}_\q(V^p,\OO_\pf)_{\rhobar'} \rightarrow \widetilde{\check{Y}}_\q(V^p,\OO_\pf)_{\rhobar'}$ with the embedding \\$\widetilde{\check{Y}}_\q(V^p,\OO_\pf)_{\rhobar'} \rightarrow \widetilde{H}_{D'}^1(V^p,\OO_\pf)_{\rhobar'}$ provided by Proposition \ref{prop:Yses} induces an isomorphism between eigenvarieties
$$\mathscr{E}(\widetilde{Y}_\q(V^p,\OO_\pf)_{\rhobar'},\T'(V^p)_{\rhobar'}) \cong \mathscr{E}(\widetilde{H}_{D'}^1(V^p,\OO_\pf)_{\rhobar'},\T'(V^p)_{\rhobar'}).$$
\end{lemma}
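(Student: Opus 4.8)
This is the $G'$-analogue of Lemma~\ref{lnew}, and the plan is to deduce it from Proposition~\ref{galfacY}, which makes the relevant structure completely explicit (this works cleanly for $G'$ precisely because the irreducible components of the special fibres of the $\mathbb{M}'_\q$ are rational curves, so that $\widetilde{\check{Y}}_\q$ takes over the role played for $G$ by $\widetilde{H}^1_\red$). Throughout I would localise at the non-Eisenstein maximal ideal $\m'$ of $\T'(V^p)$ attached to $\rhobar$ and invert $\varpi$; this loses nothing, since the eigenvarieties in the statement are defined from these localisations and the passage $V\mapsto J_B(V^{an}_E)$ already inverts $\varpi$. Put $\mathbf{A}=\T'(V^p)_{\m'}$, $\T'_{\rhobar}=\T'(V^p)_{\rhobar}$, and let $Y$ be the module defined just before Proposition~\ref{galfacY}.

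First I would record the structural input. By Propositions~\ref{galfac} and~\ref{galfacY}, the isomorphism $ev_E$ identifies $\widetilde{H}^1_{D'}(V^p,\OO_\pf)_{\rhobar,E}$ with $\rho_{\rhobar,V^p}^m\otimes_{\T'_{\rhobar}}Y$, its sub $\widetilde{\check{Y}}_\q(V^p,\OO_\pf)_{\rhobar,E}$ with $V_{0,E}\otimes_{\T'_{\rhobar}}Y$, and its quotient $\widetilde{Y}_\q(V^p,\OO_\pf)_{\rhobar,E}(-1)$ with $V_{1,E}\otimes_{\T'_{\rhobar}}Y$, where $\rho_{\rhobar,V^p}^m\otimes_{\OO_\pf}E$ is free of rank $2$ over $\T'_{\rhobar}\otimes_{\OO_\pf}E$ with free rank-one submodule $V_{0,E}$ and free rank-one quotient $V_{1,E}$. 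Since $V_{1,E}$ is projective, the extension splits as $\T'_{\rhobar}\otimes_{\OO_\pf}E$-modules; and since the twist $(-1)$ occurring here is a twist of the $G_\q$-action alone, leaving the underlying $\mathbf{A}[G'(\Q_p)]$-module unchanged, we obtain (forgetting $G_\q$) isomorphisms of $\mathbf{A}[G'(\Q_p)]$-modules
$$\widetilde{H}^1_{D'}(V^p,\OO_\pf)_{\m',E} \cong \widetilde{\check{Y}}_\q(V^p,\OO_\pf)_{\m',E}^{\oplus 2} \cong \widetilde{Y}_\q(V^p,\OO_\pf)_{\m',E}^{\oplus 2},$$
under which the map of the statement — the monodromy embedding $\widetilde{Y}_\q\to\widetilde{\check{Y}}_\q$ followed by the embedding of Proposition~\ref{prop:Yses} — is an $\mathbf{A}$-linear injection into one of the two summands.

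Next I would feed this into the eigenvariety construction of Definition~\ref{def:evar}. Passage to locally analytic vectors, the functor $J_B$, and passage to the associated coherent sheaf on $\widehat{T}$ are all additive and $\mathbf{A}$-linear, so the coherent sheaves $\mathscr{M}_H$ and $\mathscr{M}_Y$ attached to $\widetilde{H}^1_{D'}(V^p,\OO_\pf)_{\m'}$ and $\widetilde{Y}_\q(V^p,\OO_\pf)_{\m'}$ satisfy $\mathscr{M}_H\cong\mathscr{M}_Y^{\oplus 2}$ as sheaves of modules over $\mathbf{A}\widehat{\otimes}_{\OO_\pf}\OO_{\widehat{T}}$. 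A coherent sheaf and its square have the same annihilator in $\mathbf{A}\widehat{\otimes}_{\OO_\pf}\OO_{\widehat{T}}$, so the image algebra sheaves $\mathscr{A}_H$ and $\mathscr{A}_Y$ coincide as quotients of $\mathbf{A}\widehat{\otimes}_{\OO_\pf}\OO_{\widehat{T}}$. On the other hand the map of the statement is an $\mathbf{A}$-linear injection, so (by left-exactness of $J_B$ and exactness of passing to the dual) it induces a surjection $\mathscr{M}_H\twoheadrightarrow\mathscr{M}_Y$, hence on relative spectra over $\widehat{T}$ a closed immersion $\mathscr{E}(\widetilde{Y}_\q(V^p,\OO_\pf)_{\m'},\T'(V^p)_{\m'})\hookrightarrow\mathscr{E}(\widetilde{H}^1_{D'}(V^p,\OO_\pf)_{\m'},\T'(V^p)_{\m'})$; the equality $\mathscr{A}_H=\mathscr{A}_Y$ shows this closed immersion is an isomorphism.

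The step that really requires care — the main obstacle — is the compatibility of $J_B$ and of the dualising functor with the tensor identification $\widetilde{H}^1_{D'}(V^p,E)^{an}_{\rhobar}\cong\rho_{\rhobar,V^p}^m\otimes_{\T'_{\rhobar}}(Y\otimes_{\OO_\pf}E)^{an}$ inside the category of essentially admissible locally analytic $T$-representations, together with the $\mathbf{A}$-equivariance of that identification; this is of a piece with the verifications in the proof of Proposition~\ref{galfac}, and goes through because $\rho_{\rhobar,V^p}^m$ is finite free over $\T'_{\rhobar}\otimes_{\OO_\pf}E$ and carries the trivial $G'(\Q_p)$-action. If one wishes to bypass it, one can instead apply $(-)^{an}$ and the left-exact functor $J_B$ directly to the short exact sequence of Proposition~\ref{prop:Yses}, exhibiting $\mathscr{M}_H$ as an extension of a copy of $\mathscr{M}_Y$ by a quotient of $\mathscr{M}_Y$ (using Proposition~\ref{galfacY} to identify the first term of that sequence and the twist of its third term abstractly as $\mathbf{A}[G'(\Q_p)]$-representations); this again forces $\mathrm{Ann}(\mathscr{M}_H)=\mathrm{Ann}(\mathscr{M}_Y)$ and the asserted isomorphism of eigenvarieties.
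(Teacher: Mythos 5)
Your argument is correct and follows exactly the route the paper intends: the paper's entire proof is the single sentence ``This follows from Proposition \ref{galfacY}.'', and your write-up is a faithful (and more detailed) unwinding of that deduction, using the identification $\widetilde{H}^1_{D'}(V^p,\OO_\pf)_{\rhobar,E}\cong\rho^m_{\rhobar,V^p}\otimes_{\T'(V^p)_{\rhobar}}Y$ to see $\widetilde{H}^1_{D'}$ as two copies of $\widetilde{Y}_\q$ as an $\mathbf{A}[G'(\Q_p)]$-module and then comparing annihilators of the associated coherent sheaves on $\widehat{T}$.
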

\begin{proof}
This follows from Proposition \ref{YHsame}.
\end{proof}

\begin{lemma}\label{lemmaXnew}
The map $\widetilde{X}_\qb(U^p(\qa\qb),\OO_\pf)_{\rhobar}^{\qa\mhyphen\mathrm{new}}\rightarrow \widetilde{H}^1_D(U^p(\qa\qb),\OO_\pf)_{\rhobar}^{\qa\qb\mhyphen\mathrm{new}}$ induces an isomorphism between eigenvarieties
\small$$\mathscr{E}(\widetilde{X}_\qb(U^p(\qa\qb),\OO_\pf)_{\rhobar}^{\qa\mhyphen\mathrm{new}},\T(U^p(\qa\qb))_{\rhobar})\cong \mathscr{E}(\widetilde{H}^1_D(U^p(\qa\qb),\OO_\pf)_{\rhobar}^{\qa\qb\mhyphen\mathrm{new}},\T(U^p(\qa\qb))_{\rhobar}).$$\normalsize
\end{lemma}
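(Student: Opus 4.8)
The plan is to argue as for Lemmas~\ref{lnew} and~\ref{lemmaYH1}, upgrading the eigensystem comparison of Proposition~\ref{prop:Xdoublenew} to an isomorphism of eigenvarieties. The map in the statement is the injection $\alpha$ built in the proof of Proposition~\ref{prop:Xdoublenew}, which fits into the exact sequence
\begin{equation*}
0\longrightarrow\widetilde{X}_\qb(U^p(\qa\qb),\OO_\pf)_\m^{\qa\mhyphen\mathrm{new}}\stackrel{\alpha}{\longrightarrow}\widetilde{H}^1_D(U^p(\qa\qb),\OO_\pf)_\m^{\qa\qb\mhyphen\mathrm{new}}\longrightarrow\widetilde{X}_\qb(U^p(\qa\qb),\OO_\pf)_\m^{\qa\mhyphen\mathrm{new}}(-1)
\end{equation*}
of $\varpi$-adically admissible $G(\Q_p)$-representations over $\OO_\pf$ carrying a commuting $\T(U^p(\qa\qb))_\m$-action, and whose proof moreover shows that $\coker(\alpha)$ embeds into $\widetilde{X}_\qb(U^p(\qa\qb),\OO_\pf)_\m^{\qa\mhyphen\mathrm{new}}(-1)$. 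I would first pass to the associated Banach $\ep$-representations, take locally analytic vectors (an exact operation), apply the left exact Jacquet functor $J_B$, and apply the exact contravariant anti-equivalence between essentially admissible locally analytic $T$-representations and coherent sheaves on $\widehat{T}$. Since $\alpha$ is injective this produces a $\T(U^p(\qa\qb))_\m$-equivariant surjection $\mathscr{M}_2\twoheadrightarrow\mathscr{M}_1$ of coherent $\OO_{\widehat{T}}$-modules, where $\mathscr{M}_1$, $\mathscr{M}_2$ are the sheaves of Definition~\ref{def:evar} attached to the source and target. Being Hecke equivariant, this surjection shows that the image of $\T(U^p(\qa\qb))_\m\otimes\OO_{\widehat{T}}$ in $\End_{\widehat{T}}(\mathscr{M}_1)$ is a quotient, over $\T(U^p(\qa\qb))_\m\otimes\OO_{\widehat{T}}$, of its image in $\End_{\widehat{T}}(\mathscr{M}_2)$; taking relative spectra over $\widehat{T}$ gives a closed immersion
\begin{equation*}
\mathscr{E}(\widetilde{X}_\qb(U^p(\qa\qb),\OO_\pf)_\m^{\qa\mhyphen\mathrm{new}},\T(U^p(\qa\qb))_\m)\hookrightarrow\mathscr{E}(\widetilde{H}^1_D(U^p(\qa\qb),\OO_\pf)_\m^{\qa\qb\mhyphen\mathrm{new}},\T(U^p(\qa\qb))_\m).
\end{equation*}

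To see this is an isomorphism it suffices (as in Lemmas~\ref{lnew} and~\ref{lemmaYH1}) to check that the two spaces have the same points, i.e.\ the same pairs consisting of a character of $T$ and a system of Hecke eigenvalues. Applying the same functors to the short exact sequence of $\varpi$-adically admissible representations relating the source, the target and $\coker(\alpha)$, and to the embedding $\coker(\alpha)\hookrightarrow\widetilde{X}_\qb(U^p(\qa\qb),\OO_\pf)_\m^{\qa\mhyphen\mathrm{new}}(-1)$, identifies the kernel of $\mathscr{M}_2\twoheadrightarrow\mathscr{M}_1$ with a $\T(U^p(\qa\qb))_\m$-equivariant subquotient of the sheaf attached to the twist $\widetilde{X}_\qb(U^p(\qa\qb),\OO_\pf)_\m^{\qa\mhyphen\mathrm{new}}(-1)$. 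Now tensoring by $\ep(-1)$ commutes with $J_B$ and with passage to locally analytic vectors, and on $\widehat{T}$ it merely translates the support by the fixed character $\ep(-1)|_{T}$ while leaving the $\T(U^p(\qa\qb))_\m$-action unchanged. Hence a point $(\chi,\lambda)$ of the target eigenvariety not lying on the source would force the pair $(\chi\cdot(\ep(-1)|_{T})^{-1},\lambda)$ onto the source, and so also onto the target; that is, the target would contain two points with the same $\lambda$ whose characters of $T$ differ by $\ep(-1)|_{T}$. This cannot happen: $\ep(-1)|_{T}$ restricts nontrivially to the centre $Z(\Q_p)\subset T$ (it is a nontrivial power of $\prod_{v\mid p}N_{F_v/\Q_p}$), whereas the restriction to $Z(\Q_p)$ of the character of $T$ at a point of the target is determined by $\lambda$, by the weight, and by the fixed normalisation --- a relation holding at classical points and hence, by their Zariski density (and Proposition~\ref{prop:Xdoublenew} together with the classical comparison of Theorem~\ref{RRESclass}, which place the classical points of the target on the source), everywhere on the target. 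So the translate contributes nothing, and the closed immersion is an isomorphism.

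I expect the final step --- excluding the $\widehat{T}$-translate produced by the Tate twist in Proposition~\ref{prop:Xdoublenew} --- to be the main obstacle, since it is the one place where equality of systems of Hecke eigenvalues has to be refined to equality of eigenvarieties, and it forces one to keep track of how the twist interacts with the map to weight space. The density-of-classical-points argument above is the most transparent route; alternatively one can work affinoid-locally on $\widehat{T}$ with finite-slope subspaces and observe that the translated contribution is supported over the (empty) locus where the central character at $p$ is incompatible with the Hecke system. Granting this, the lemma is the eigenvariety-theoretic repackaging of Proposition~\ref{prop:Xdoublenew}, exactly parallel to Lemmas~\ref{lnew} and~\ref{lemmaYH1}.
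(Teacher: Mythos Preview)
Your overall strategy---build a closed immersion from the injection $\alpha$, then argue that the two eigenvarieties have the same points---is sound and is precisely the content that the paper's one-line proof (``This follows from Proposition~\ref{prop:Xdoublenew}'') leaves implicit. However, the step you single out as the ``main obstacle'' rests on a misreading of the twist.

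The $(-1)$ appearing in the exact sequence of Proposition~\ref{prop:Xdoublenew} is the Tate twist of the $G_{\qb}=\Gal(\overline{L}/L)$-action coming from the vanishing cycles formalism (see the paragraph introducing \eqref{speces} in Section~\ref{sec:van}, and the first sequence of Proposition~\ref{prop:es}). It has \emph{no} effect on the $G(\Q_p)$-action or on the Hecke action; in particular it does not translate anything on $\widehat{T}$. You have conflated this with the character $E(1)$ introduced in Section~\ref{localg}, which does carry a nontrivial $G(\Q_p)$-action via $\prod_{v\mid p}N_{F_v/\Q_p}\circ\det$; but that object plays no role here.

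Once this is corrected, your argument collapses to the intended one: the cokernel of $\alpha$ embeds, $G(\Q_p)$- and Hecke-equivariantly, into $\widetilde{X}_\qb(U^p(\qa\qb),\OO_\pf)_\m^{\qa\mhyphen\mathrm{new}}$ itself (the Galois twist being invisible to the eigenvariety construction of Definition~\ref{def:evar}). Hence the coherent sheaf attached to the target is sandwiched between $\mathscr{M}_1$ and an extension of a subquotient of $\mathscr{M}_1$ by $\mathscr{M}_1$, all with the same Hecke action, and the comparison goes through exactly as in Lemmas~\ref{lnew} and~\ref{lemmaYH1}. The density-of-classical-points and central-character compatibility discussion is therefore unnecessary.
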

\begin{proof}
This follows from Proposition \ref{prop:Xdoublenew}.
\end{proof}
We now put ourselves in the situation of section \ref{pJL}, so $U^p$ and $V^p$ are isomorphic at places away from $\qa$ and $\qb$, and $\m$ and $\m'$ give rise to the same irreducible mod $p$ Galois representation $\rhobar$. 
\begin{theorem}\emph{(Overconvergent Jacquet-Langlands correspondence)}\label{ocjl}
The map $$\widetilde{Y}_\qa(V^p,\OO_\pf)_{\rhobar} \rightarrow \widetilde{X}_\qb(U^p(\qa\qb),\OO_\pf)_{\rhobar}$$ given by Theorem \ref{RRES} induces an isomorphism between eigenvarieties
$$\mathscr{E}(\widetilde{H}^1_{D'}(V^p,\OO_\pf)_{\rhobar},\T'(V^p)_{\rhobar}) \cong \mathscr{E}(\widetilde{H}^1_D(U^p(\qa\qb),\OO_\pf)_{\rhobar}^{\qa\qb\mhyphen\mathrm{new}},\T(U^p(\qa\qb))_{\rhobar}).$$
\end{theorem}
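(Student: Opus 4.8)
The plan is to deduce Theorem~\ref{ocjl} by threading the isomorphism furnished by Theorem~\ref{RRES} through Lemmas~\ref{lemmaYH1} and~\ref{lemmaXnew}, using only that the eigenvariety construction of Definition~\ref{def:evar} is functorial in the pair $(V,\mathbf{A})$ and insensitive to replacing $\mathbf{A}$ by the quotient through which it actually acts. Concretely, the first exact sequence of Theorem~\ref{RRES} exhibits the map $\widetilde{Y}_\qa(V^p,\OO_\pf)_{\rhobar} \to \widetilde{X}_\qb(U^p(\qa\qb),\OO_\pf)_{\rhobar}$ as an isomorphism onto the kernel $\widetilde{X}_\qb(U^p(\qa\qb),\OO_\pf)_{\rhobar}^{\qa\mhyphen\mathrm{new}}$ of $i^\dagger$, and this isomorphism is equivariant for $G(\Q_p)\cong G'(\Q_p)$ and for the action of $\T(U^p(\qa\qb))_{\rhobar}$, the latter acting on the source through its quotient $\T'(V^p)_{\rhobar}$.

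First I would apply the chain of constructions underlying Definition~\ref{def:evar} --- pass to locally analytic vectors over $E$, apply the locally analytic Jacquet functor $J_B$, take strong duals to obtain a coherent sheaf $\mathscr{M}$ on $\widehat{T}$, and form the coherent sheaf of algebras $\mathscr{A}$ cut out in $\End_{\widehat{T}}(\mathscr{M})$ --- to the isomorphism of admissible $\OO_\pf[G(\Q_p)]$-representations with commuting Hecke action provided by Theorem~\ref{RRES}. Since each of these operations is functorial, this yields an isomorphism of relative spectra
$$\mathscr{E}(\widetilde{Y}_\qa(V^p,\OO_\pf)_{\rhobar},\T(U^p(\qa\qb))_{\rhobar}) \cong \mathscr{E}(\widetilde{X}_\qb(U^p(\qa\qb),\OO_\pf)_{\rhobar}^{\qa\mhyphen\mathrm{new}},\T(U^p(\qa\qb))_{\rhobar}).$$
Next, because the $\T(U^p(\qa\qb))_{\rhobar}$-action on $\widetilde{Y}_\qa(V^p,\OO_\pf)_{\rhobar}$ factors through the natural map $\T(U^p(\qa\qb))_{\rhobar}\to\T'(V^p)_{\rhobar}$ (the limit of the classical Jacquet--Langlands surjections), the sheaf of algebras $\mathscr{A}\hookrightarrow\End_{\widehat{T}}(\mathscr{M})$ coincides whether computed using $\T(U^p(\qa\qb))_{\rhobar}$ or $\T'(V^p)_{\rhobar}$; hence by Definition~\ref{def:evar}, $\mathscr{E}(\widetilde{Y}_\qa(V^p,\OO_\pf)_{\rhobar},\T(U^p(\qa\qb))_{\rhobar}) = \mathscr{E}(\widetilde{Y}_\qa(V^p,\OO_\pf)_{\rhobar},\T'(V^p)_{\rhobar})$.

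It then remains to invoke the two lemmas already established. By Lemma~\ref{lemmaYH1}, $\mathscr{E}(\widetilde{Y}_\qa(V^p,\OO_\pf)_{\rhobar},\T'(V^p)_{\rhobar}) \cong \mathscr{E}(\widetilde{H}^1_{D'}(V^p,\OO_\pf)_{\rhobar},\T'(V^p)_{\rhobar})$, and by Lemma~\ref{lemmaXnew}, $\mathscr{E}(\widetilde{X}_\qb(U^p(\qa\qb),\OO_\pf)_{\rhobar}^{\qa\mhyphen\mathrm{new}},\T(U^p(\qa\qb))_{\rhobar}) \cong \mathscr{E}(\widetilde{H}^1_D(U^p(\qa\qb),\OO_\pf)_{\rhobar}^{\qa\qb\mhyphen\mathrm{new}},\T(U^p(\qa\qb))_{\rhobar})$. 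Composing these isomorphisms gives the asserted isomorphism of eigenvarieties, and by construction the underlying map on rigid spaces is induced by the composite $\widetilde{Y}_\qa(V^p,\OO_\pf)_{\rhobar}\to\widetilde{X}_\qb(U^p(\qa\qb),\OO_\pf)_{\rhobar}\to\widetilde{H}^1_D(U^p(\qa\qb),\OO_\pf)_{\rhobar}$, in particular by the map of Theorem~\ref{RRES}, as claimed.

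The substantive content here --- the identification of the new subspaces with the $X$- and $Y$-modules, and the exact sequence linking the two quaternion algebras --- has already been carried out in Theorem~\ref{RRES} and Propositions~\ref{prop:Xnew}, \ref{prop:Xdoublenew} and~\ref{galfacY}, so the remaining work is essentially bookkeeping. The one point that genuinely requires care, and which I expect to be the main (and only) obstacle, is the matching of the a priori distinct Hecke algebras $\T(U^p(\qa\qb))_{\rhobar}$ and $\T'(V^p)_{\rhobar}$: one must check that the classical Jacquet--Langlands correspondence identifies the Hecke action on $\widetilde{Y}_\qa$ with that on $\widetilde{X}_\qb^{\qa\mhyphen\mathrm{new}}$ compatibly in the limit over $V_p$ and $U_p$ --- which is built into the construction of $\T(U^p(\qa\qb))_{\rhobar}\to\T'(V^p)_{\rhobar}$ together with the compatibility of dual graphs noted in the proof of Theorem~\ref{RRES} --- and that only the image of the Hecke algebra in $\End_{\widehat{T}}(\mathscr{M})$ enters the definition of $\mathscr{E}$, which is immediate from Definition~\ref{def:evar}.
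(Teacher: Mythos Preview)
Your proposal is correct and follows essentially the same route as the paper's proof: use Theorem~\ref{RRES} to identify $\widetilde{Y}_\qa$ with $\widetilde{X}_\qb^{\qa\mhyphen\mathrm{new}}$, pass to eigenvarieties, and then apply Lemmas~\ref{lemmaYH1} and~\ref{lemmaXnew} to reach the desired statement. The paper's own proof is a terse two-sentence version of exactly this argument; your extra paragraph on matching $\T(U^p(\qa\qb))_{\rhobar}$ with $\T'(V^p)_{\rhobar}$ via the image in $\End_{\widehat{T}}(\mathscr{M})$ simply makes explicit what the paper leaves as ``it is clear that''.
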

\begin{proof}
It is clear that Theorem \ref{RRES} induces an isomorphism \small$\mathscr{E}(\widetilde{Y}_\qa(V^p,\OO_\pf)_{\rhobar} ,\T'(V^p)_{\rhobar}) \cong \mathscr{E}(\widetilde{X}_\qb(U^p(\qa\qb),\OO_\pf)_{\rhobar}^{\qa\mhyphen\mathrm{new}},\T(U^p(\qa\qb))_{\rhobar})$\normalsize. Our theorem now follows by applying Lemmas \ref{lemmaYH1} and \ref{lemmaXnew}.
\end{proof}

Finally we note that level raising results in the same spirit as those in \cite{chicomp,cclr} follow from the equidimensionality of the eigenvarieties described in this section. In particular we have
\begin{corollary}\label{weirdlr}
The following eigenvarieties are equidimensional (of dimension $d+1+\delta$):
\begin{itemize}
\item $\mathscr{E}(\widetilde{H}^1_{D}(U^p(\q),\OO_\pf)_{\rhobar}^{\q\mhyphen\mathrm{new}},\T(U^p(\q))_{\rhobar})$
\item $\mathscr{E}(\widetilde{H}^1_D(U^p(\qa\qb),\OO_\pf)_{\rhobar}^{\qa\qb\mhyphen\mathrm{new}},\T(U^p(\qa\qb))_{\rhobar})$
\end{itemize}
\end{corollary}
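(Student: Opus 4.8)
The plan is to obtain both statements by transporting the equidimensionality already recorded in Corollary \ref{lotsequidim} along the eigenvariety isomorphisms established above. Throughout I write $\m$ (respectively $\m'$) for the non-Eisenstein maximal ideal of $\T(\up)$ (respectively $\T'(V^p)$) attached to $\rhobar$, so that $(-)_\m$ agrees with the localisation $(-)_{\rhobar}$; note also that any isomorphism of rigid analytic spaces preserves both equidimensionality and dimension. The new subspaces are closed $G(\Q_p)$-subrepresentations of $\varpi$-adically admissible representations (being kernels of the Hecke- and $G(\Q_p)$-equivariant maps $i^\dagger$), hence themselves $\varpi$-adically admissible, and the Hecke algebras involved are Noetherian, so the eigenvarieties in the statement are indeed defined via Definition \ref{def:evar}.

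For the first eigenvariety, Lemma \ref{lnew} gives an isomorphism
$$\mathscr{E}(\widetilde{X}_\q(\up,\OO_\pf)_{\rhobar},\T(\up)_{\rhobar}) \cong \mathscr{E}(\widetilde{H}^1_D(U^p(\q),\OO_\pf)^{\q\mhyphen\mathrm{new}}_{\rhobar},\T(\up)_{\rhobar}),$$
and the source is equidimensional of dimension $d+1+\delta$ by Corollary \ref{lotsequidim}; this gives the first claim. For the second, I put myself in the situation of Section \ref{pJL}, so that $V^p$ matches $U^p$ away from $\qa,\qb$ and $\m,\m'$ give rise to the same $\rhobar$; then Theorem \ref{ocjl} yields
$$\mathscr{E}(\widetilde{H}^1_D(U^p(\qa\qb),\OO_\pf)_{\rhobar}^{\qa\qb\mhyphen\mathrm{new}},\T(U^p(\qa\qb))_{\rhobar}) \cong \mathscr{E}(\widetilde{H}^1_{D'}(V^p,\OO_\pf)_{\rhobar},\T'(V^p)_{\rhobar}),$$
and the target is equidimensional of dimension $d+1+\delta$ by Corollary \ref{lotsequidim}. (Equivalently one composes Lemma \ref{lemmaXnew} with the identification of $\widetilde{X}_\qb(U^p(\qa\qb),\OO_\pf)_{\rhobar}^{\qa\mhyphen\mathrm{new}}$ with $\widetilde{Y}_\qa(V^p,\OO_\pf)_{\rhobar}$ afforded by the first exact sequence of Theorem \ref{RRES} and with Lemma \ref{lemmaYH1}.) This gives the second claim.

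The one place requiring care is that the new subspaces $\widetilde{H}^1_D(U^p(\q),\OO_\pf)^{\q\mhyphen\mathrm{new}}_{\rhobar}$ and $\widetilde{H}^1_D(U^p(\qa\qb),\OO_\pf)^{\qa\qb\mhyphen\mathrm{new}}_{\rhobar}$ are not visibly cofree, so the equidimensionality lemma used to prove Corollary \ref{lotsequidim} cannot be applied to them directly; the whole point of the argument is that the isomorphisms of Lemmas \ref{lnew}, \ref{lemmaXnew} and \ref{lemmaYH1} and of Theorem \ref{ocjl} move the question onto the spaces $\widetilde{X}_\q$, $\widetilde{Y}_\qa$ and $\widetilde{H}^1_{D'}$, whose cofreeness was established in Corollaries \ref{cor:cofree2} and \ref{cor:cofree3}. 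The remaining verification --- that the auxiliary level at $p$ may be chosen small enough (pro-$p$, with neat tame level) for those cofreeness statements to apply, which is harmless by the remark in Section \ref{subsec:intco} since the eigenvarieties depend only on the tame level --- is routine, and I anticipate no real obstacle: the corollary is essentially a bookkeeping assembly of results already in hand.
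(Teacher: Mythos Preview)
Your proof is correct and follows essentially the same route as the paper's: the first item via Lemma \ref{lnew} combined with the $\widetilde{X}_\q$ case of Corollary \ref{lotsequidim}, and the second via Theorem \ref{ocjl} combined with Corollary \ref{lotsequidim}. The additional remarks you make (admissibility of the new subspaces, transport of equidimensionality along isomorphisms, the alternative factorisation through Lemmas \ref{lemmaXnew} and \ref{lemmaYH1}) are all fine but not needed beyond what the paper records.
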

\begin{proof}
The first claim follows from the second part of Corollary \ref{lotsequidim} and Lemma \ref{lnew}. The second claim follows from the fourth part of Corollary \ref{lotsequidim} and Theorem \ref{ocjl}.\end{proof}
Recall the natural maps $$i_1:\check{X}_\qb(U(\qb),\OO_\pf)_{\rhobar}\rightarrow\check{X}_\qb(U(\qa\qb),\OO_\pf)_{\rhobar}$$ appearing in Section \ref{sec:RR}, which induce an injection $$i_1:\widetilde{\check{X}}_\qb(U^p(\qb),\OO_\pf)_{\rhobar} \rightarrow\widetilde{\check{X}}_\qb(U^p(\qa\qb),\OO_\pf)_{\rhobar}$$ by Theorem \ref{RRES}. This induces an embedding $i_1$ from $$\mathscr{E}(\widetilde{H}^1_{D}(U^p(\qb),\OO_\pf)_{\rhobar}^{\qb\mhyphen\mathrm{new}},\T(U^p(\qb))_{\rhobar})$$ to $$\mathscr{E}(\widetilde{H}^1_D(U^p(\qa\qb),\OO_\pf)_{\rhobar}^{\qb\mhyphen\mathrm{new}},\T(U^p(\qa\qb))_{\rhobar}).$$ The following corollary characterises the intersection of the image of $i_1$ (the $\qa$-old points of the eigenvariety) with $$\mathscr{E}(\widetilde{H}^1_{D}(U^p(\qa\qb),\OO_\pf)_{\rhobar}^{\qa\qb\mhyphen\mathrm{new}},\T(U^p(\qa\qb))_{\rhobar}).$$
\begin{corollary}\label{levelraising}
Suppose we have a point $$x \in \mathscr{E}(\widetilde{H}^1_{D}(U^p(\qb),\OO_\pf)_{\rhobar}^{\qb\mhyphen\mathrm{new}},\T(U^p(\qb))_{\rhobar}).$$ Then $$i_1(x) \in \mathscr{E}(\widetilde{H}^1_{D}(U^p(\qa\qb),\OO_\pf)_{\rhobar}^{\qa\qb\mhyphen\mathrm{new}},\T(U^p(\qa\qb))_{\rhobar})$$ if and only if the $T_\qa$ and $S_\qa$ eigenvalues of $x$ satisfy $$T_\qa(x)^2-(\mathbf{N}\qa+1)^2S_\qa(x)=0.$$
\end{corollary}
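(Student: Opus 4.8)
The plan is to rephrase the membership question in terms of completed vanishing cycles and then to play the two exact sequences of Theorem \ref{RRES} against each other. Write $\mathscr{E}^{\qb}$, $\mathscr{E}^{\mathrm{new}}$ and $\mathscr{E}^{(\qb)}$ for the eigenvarieties $\mathscr{E}(\widetilde{H}^1_D(U^p(\qa\qb),\OO_\pf)_{\rhobar}^{\qb\mhyphen\mathrm{new}},\T(U^p(\qa\qb))_{\rhobar})$, $\mathscr{E}(\widetilde{H}^1_D(U^p(\qa\qb),\OO_\pf)_{\rhobar}^{\qa\qb\mhyphen\mathrm{new}},\T(U^p(\qa\qb))_{\rhobar})$ and $\mathscr{E}(\widetilde{H}^1_D(U^p(\qb),\OO_\pf)_{\rhobar}^{\qb\mhyphen\mathrm{new}},\T(U^p(\qb))_{\rhobar})$, so that $i_1$ embeds $\mathscr{E}^{(\qb)}$ into $\mathscr{E}^{\qb}$ and $\mathscr{E}^{\mathrm{new}}$ sits inside $\mathscr{E}^{\qb}$. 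First I would use Lemma \ref{lnew} (applied with tame level $U^p(\qa)$ and place $\qb$) to identify $\mathscr{E}^{\qb}$ with $\mathscr{E}(\widetilde{X}_\qb(U^p(\qa\qb),\OO_\pf)_{\rhobar},\T(U^p(\qa\qb))_{\rhobar})$, and Lemma \ref{lemmaXnew} together with the first exact sequence of Theorem \ref{RRES} (which exhibits $\widetilde{Y}_\qa(V^p,\OO_\pf)_{\rhobar}=\ker(i^\dagger)$ as the $\qa\mhyphen\mathrm{new}$ part of $\widetilde{X}_\qb(U^p(\qa\qb),\OO_\pf)_{\rhobar}$) to identify $\mathscr{E}^{\mathrm{new}}$ with $\mathscr{E}(\widetilde{Y}_\qa(V^p,\OO_\pf)_{\rhobar},\T(U^p(\qa\qb))_{\rhobar})$, cut out inside $\mathscr{E}^{\qb}$ by the subrepresentation $\widetilde{Y}_\qa(V^p,\OO_\pf)_{\rhobar}\hookrightarrow\widetilde{X}_\qb(U^p(\qa\qb),\OO_\pf)_{\rhobar}$. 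Transporting $i_1$ onto the $\widetilde{X}_\qb$ modules via the monodromy isomorphism of Lemma \ref{monodromy} (which commutes with everything happening at the place $\qa$), the point $i_1(x)$ has the same character $\chi$ of $T$ and the same eigensystem $\lambda'\colon\T(U^p(\qa\qb))_{\rhobar}\to k(x)$ as $x$, where $\lambda'$ is the restriction to $\T(U^p(\qa\qb))_{\rhobar}$ of the eigensystem $\lambda$ of $x$. So the claim becomes: the $(\chi,\lambda')$-eigenspace of $J_B(\widetilde{Y}_\qa(V^p,E)^{an}_{\rhobar})$ is non-zero if and only if $T_\qa(x)^2=(\mathbf{N}\qa+1)^2S_\qa(x)$.

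The computational heart of the argument is the identification of $i^\dagger\circ i$, where $i=(i_1,i_{\eta_\qa})$ is the level raising map of the second exact sequence of Theorem \ref{RRES}, transported onto the $\widetilde{X}_\qb$ modules. By the usual degeneracy-map calculus (compare page 449 of \cite{DT} and Section 3 of \cite{Ribet100}) this composite acts on $\widetilde{X}_\qb(U^p(\qb),\OO_\pf)_{\rhobar}^{\oplus 2}$ as a $2\times 2$ matrix of Hecke operators with diagonal entries $\mathbf{N}\qa+1$ and with off-diagonal entries $T_\qa$ and its transpose $S_\qa^{-1}T_\qa$, so that $\det(i^\dagger\circ i)$ equals the product of the invertible operator $S_\qa^{-1}$ with $(\mathbf{N}\qa+1)^2S_\qa-T_\qa^2$. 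I expect pinning down this matrix precisely, and in particular the normalisation of $S_\qa$ making $\det(i^\dagger\circ i)$ vanish at $x$ exactly when $T_\qa(x)^2=(\mathbf{N}\qa+1)^2S_\qa(x)$, to be the only genuinely fiddly step; this is the main obstacle, and everything else is formal.

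Granted this, the `if' direction is short. Suppose $T_\qa(x)^2=(\mathbf{N}\qa+1)^2S_\qa(x)$, so $\det(i^\dagger\circ i)$ vanishes at $x$. Since $x$ is a point of $\mathscr{E}(\widetilde{X}_\qb(U^p(\qb),\OO_\pf)_{\rhobar},\T(U^p(\qb))_{\rhobar})$ (Lemma \ref{lnew}), pick a non-zero vector $w$ in the $(\chi,\lambda)$-eigenspace of $J_B(\widetilde{X}_\qb(U^p(\qb),E)^{an}_{\rhobar})$; on the span of $(w,0)$ and $(0,w)$ inside $J_B(\widetilde{X}_\qb(U^p(\qb),E)^{an}_{\rhobar})^{\oplus 2}$ the operator $i^\dagger\circ i$ acts by the singular scalar matrix $\left(\begin{smallmatrix}\mathbf{N}\qa+1 & T_\qa(x)\\ S_\qa(x)^{-1}T_\qa(x) & \mathbf{N}\qa+1\end{smallmatrix}\right)$, so there is a non-zero $v$ in this span with $i^\dagger(i(v))=0$. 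Taking locally analytic vectors (exact) and applying $J_B$ (left exact, which suffices for kernels) to the first exact sequence of Theorem \ref{RRES} identifies the kernel of $i^\dagger$ on Jacquet modules with $J_B(\widetilde{Y}_\qa(V^p,E)^{an}_{\rhobar})$; since $i$ is injective it remains injective after these operations, so $i(v)$ is a non-zero vector in the $(\chi,\lambda')$-eigenspace of $J_B(\widetilde{Y}_\qa(V^p,E)^{an}_{\rhobar})$, and hence $i_1(x)$ lies in $\mathscr{E}^{\mathrm{new}}$.

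For the `only if' direction I would pass to Galois representations. If $i_1(x)$ lies in $\mathscr{E}^{\mathrm{new}}$ then $(\chi,\lambda')$ occurs in $\widetilde{Y}_\qa(V^p,\OO_\pf)_{\rhobar}$, so by Proposition \ref{galfacY} the Galois representation $\rho$ attached to $\lambda'$ (which is the representation attached to $x$, since $\lambda'$ still pins it down on the $\rhobar$-component) has, restricted to $G_\qa$, an unramified subrepresentation on which $\mathrm{Frob}_\qa$ acts by a scalar and an unramified quotient isomorphic to its $(-1)$-twist; in particular the two eigenvalues of $\rho(\mathrm{Frob}_\qa)$ are in the ratio $\mathbf{N}\qa$. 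On the other hand $\rho$ is unramified at $\qa$, and as $x$ is realised at a level maximal at $\qa$, the Eichler--Shimura relations (as in Section 10.3 of \cite{carmauv}) give that $\rho(\mathrm{Frob}_\qa)$ has characteristic polynomial $X^2-T_\qa(x)X+(\mathbf{N}\qa)S_\qa(x)$. Writing the roots as $c$ and $(\mathbf{N}\qa)c$, comparison gives $(\mathbf{N}\qa+1)c=T_\qa(x)$ and $(\mathbf{N}\qa)c^2=(\mathbf{N}\qa)S_\qa(x)$, whence $T_\qa(x)^2=(\mathbf{N}\qa+1)^2S_\qa(x)$, as required.
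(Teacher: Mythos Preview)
Your proposal is correct and follows essentially the same line as the paper's proof: both reduce the `if' direction to the matrix computation of $i^\dagger\circ i$ on $\check{X}_\qb(U^p(\qb),\OO_\pf)_{\rhobar}^{\oplus 2}$ (or equivalently, via Lemma \ref{monodromy}, on the $\widetilde{X}_\qb$ modules), and use the singularity of this matrix at $x$ to produce a non-zero class in $\ker(i^\dagger)$, which is then identified with the $\qa$-new part via Theorem \ref{RRES} and Lemma \ref{lemmaXnew}.

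For the converse there is a small but genuine difference in justification. The paper argues that $i_1(x)$ lies in a family of classical $\qa$-newforms and uses the known local shape of the Galois representations at such classical points to deduce that the two Frobenius eigenvalues have ratio $(\mathbf{N}\qa)^{\pm 1}$; this is implicitly a Zariski-density argument. You instead invoke the filtration $V_{0,E}\subset \rho^m_{\rhobar,V^p}|_{G_\qa}$ with $V_{1,E}\cong V_{0,E}(-1)$, described in the paragraph preceding Proposition \ref{galfacY}, and specialise it at $\lambda'$. Your route is slightly more direct in that it avoids any interpolation step; just note that the relevant input is that paragraph (the structure of $\rho^m|_{G_\qa}$) rather than Proposition \ref{galfacY} itself, which is about matching $\widetilde{\check{Y}}_\qa$ and $\widetilde{Y}_\qa$ with the sub and quotient pieces under $ev_E$.
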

\begin{remark}
Note that if the conditions of this Corollary are satisfied, then Corollary \ref{weirdlr} implies that $i_1(x)$ lies in an equidimensional family of newforms, with `full' dimension.
\end{remark}
\begin{proof}
A standard calculation shows that the composite $i^\dagger\circ i$ acts by the matrix $$\begin{pmatrix}
\mathbf{N}\qa+1 & T_\qa \\ S_\qa^{-1}T_\qa & \mathbf{N}\qa+1
\end{pmatrix}$$ on $(\widetilde{\check{X}}_\qb(U^p(\qb),E)_{\rhobar}^{an})^{\oplus 2}$. Write $\widetilde{x}$ for a class in $\widetilde{\check{X}}_\qb(U^p(\qb),E')_{\rhobar}^{an}$ giving rise to the point $x$ (where $E'/E$ is a field over which $x$ is defined). Now if the criterion $T_\qa(x)^2-(\mathbf{N}\qa+1)^2S_\qa(x)=0$ is satisfied there is a non-zero element $\widetilde{y}$ of $$E'\cdot \widetilde{x} \oplus E'\cdot \widetilde{x} \subset (\widetilde{\check{X}}_\qb(U^p(\qb),E')_{\rhobar}^{an})^{\oplus 2}$$ in the kernel of $i^\dagger\circ i$ --- namely, $$((\mathbf{N}\qa+1)S_\qa(x)\cdot \widetilde{x},-T_\qa(x)\cdot \widetilde{x}).$$ The element $i(\widetilde{y})$ then gives rise to the point $$i_1(x)\in\mathscr{E}(\widetilde{\check{X}}_\qb(U^p(\qa\qb),\OO_\pf)_{\rhobar}^{\qa\mhyphen\mathrm{new}},\T(U^p(\qa\qb))_{\rhobar}),$$ so we are done by Lemmas \ref{monodromy} and \ref{lemmaXnew}. Conversely if $$i_1(x) \in \mathscr{E}(\widetilde{H}^1_{D}(U^p(\qa\qb),\OO_\pf)_{\rhobar}^{\qa\qb\mhyphen\mathrm{new}},\T(U^p(\qa\qb))_{\rhobar})$$ then the local Galois representation of $G_\qa$ attached to $x$ must be a sum of two unramified characters $\chi_1,\chi_2$ with $\chi_1(Frob_\qa)/\chi_2(Frob_\qa)=(\mathbf{N}\qa)^{\pm 1}$, since it is unramified and $i_1(x)$ lies in a family of (twists of) classical $\qa$-newforms. This implies that $T_\qa(x)^2-(\mathbf{N}\qa+1)^2S_\qa(x)=0$.
\end{proof}
\begin{remark}
We would also like to prove a level raising result characterising the intersection of $\q$-old and $\q$-new forms with tame level $U^p(\q)$. Corollary \ref{weirdlr} would be enough to deduce such a result, provided one shows that the map $$i:(\widetilde{H}^1_{D}(U^p,E)_{\rhobar}^{an})^{\oplus 2} \rightarrow \widetilde{H}^1_{D}(U^p(\q),E)_{\rhobar}^{an}$$ is an injection. This would follow from injectivity of the map $$i:\widetilde{H}^1_{D}(U^p,\OO_\pf)_{\rhobar}^{\oplus 2} \rightarrow \widetilde{H}^1_{D}(U^p(\q),\OO_\pf)_{\rhobar},$$ but this is equivalent to showing the injectivity of $$i:H^1(M(U_pU^p)_{\overline{F}},\OO_\pf/\varpi)_{\rhobar}^{\oplus 2} \rightarrow H^1(M(U_pU^p(\q))_{\overline{F}},\OO_\pf/\varpi)_{\rhobar}$$ for all compact open subgroups $U_p$ of $G(\Q_p)$ --- this version of Ihara's lemma is not known in this generality (as far as the author is aware). The approach of \cite{DT} (which is extended to Shimura curves over totally real fields in \cite{Cheng-preprint}), via crystalline methods, can work only when $U_p$ is maximal compact (since one needs $M(U_pU^p(\q))_{\overline{F}}$ to have good reduction at places dividing $p$). 
\end{remark}

\section*{acknowledgements}
This paper is based on part of the author's PhD thesis, written under the supervision of Kevin Buzzard, to whom I am grateful for providing such excellent guidance. This research crucially relies on work of Matthew Emerton, whom I also thank for several helpful conversations and communicating details of an earlier draft of \cite{Emlg}. The author was supported by an Engineering and Physical Sciences Research Council doctoral training grant during the bulk of the time spent researching the contents of this paper. The writing up process was completed whilst the author was a member of the Institute for Advanced Study, supported by National Science Foundation grant DMS-0635607. The author is currently supported by Trinity College, Cambridge. I thank all these institutions for their support. I would also like to thank the anonymous referee for their helpful comments.


\begin{thebibliography}{10}

\bibitem{Bu1}
K.~Buzzard.
\newblock On $p$-adic families of automorphic forms.
\newblock In {\em Modular Curves and Abelian Varieties}, number 224 in Progress
  in Mathematics. Birkhauser, 2004.

\bibitem{BDJ}
K.~Buzzard, F.~Diamond, and F.~Jarvis.
\newblock On {S}erre's conjecture for mod {$\ell$} {G}alois representations
  over totally real fields.
\newblock {\em Duke Math. J.}, 155(1):105--161, 2010.

\bibitem{carmauv}
H.~Carayol.
\newblock Sur la mauvaise r\'eduction des courbes de {S}himura.
\newblock {\em Compositio Math.}, 59(2):151--230, 1986.

\bibitem{Car}
H.~Carayol.
\newblock Sur les repr\'esentations {$l$}-adiques associ\'ees aux formes
  modulaires de {H}ilbert.
\newblock {\em Ann. Sci. \'Ecole Norm. Sup. (4)}, 19(3):409--468, 1986.

\bibitem{MR2111512}
G.~Chenevier.
\newblock Une correspondance de {J}acquet-{L}anglands {$p$}-adique.
\newblock {\em Duke Math. J.}, 126(1):161--194, 2005.

\bibitem{Cheng-preprint}
C.~Cheng.
\newblock On {I}hara's lemma for {S}himura curves, 2010.

\bibitem{CM}
R.~Coleman and B.~Mazur.
\newblock The eigencurve.
\newblock In {\em Galois representations in arithmetic algebraic geometry
  ({D}urham, 1996)}, volume 254 of {\em London Math. Soc. Lecture Note Ser.},
  pages 1--113. Cambridge Univ. Press, Cambridge, 1998.

\bibitem{DT}
F.~Diamond and R.~Taylor.
\newblock Non-optimal levels of mod $l$ modular representations.
\newblock {\em Invent. Math.}, 115:435--462, 1994.

\bibitem{MR2292633}
M.~Emerton.
\newblock Jacquet modules of locally analytic representations of {$p$}-adic
  reductive groups. {I}. {C}onstruction and first properties.
\newblock {\em Ann. Sci. \'Ecole Norm. Sup. (4)}, 39(5):775--839, 2006.

\bibitem{Emlgc}
M.~Emerton.
\newblock A local-global compatibility conjecture in the {$p$}-adic {L}anglands
  programme for {${\rm GL}_{2/{\Bbb Q}}$}.
\newblock {\em Pure Appl. Math. Q.}, 2(2, part 2):279--393, 2006.

\bibitem{Emint}
M.~Emerton.
\newblock On the interpolation of systems of eigenvalues attached to
  automorphic {H}ecke eigenforms.
\newblock {\em Invent. Math.}, 164(1):1--84, 2006.

\bibitem{Emlg}
M.~Emerton.
\newblock Local-global compatibility in the $p$-adic {L}anglands programme for
  $\mathrm{GL}_2/\mathbb{Q}$, 2010.
\newblock Preprint at
  \url{http://www.math.northwestern.edu/~emerton/preprints.html}.

\bibitem{Emordone}
M.~Emerton.
\newblock Ordinary parts of admissible representations of $p$-adic reductive
  groups {I}. definition and first properties.
\newblock {\em Ast{\'e}risque}, (331):355--402, 2010.
\newblock Repr{\'e}sentations $p$-adiques de groupes $p$-adiques {III} :
  M{\'e}thodes globales et g{\'e}om{\'e}triques.

\bibitem{HIS}
M.~Harris, A.~Iovita, and G.~Stevens.
\newblock The {J}acquet-{L}anglands correspondence via $l$-adic uniformization.
\newblock {I}n preparation.

\bibitem{JarMazPrin}
F.~Jarvis.
\newblock Mazur's principle for totally real fields of odd degree.
\newblock {\em Compositio Math.}, 116(1):39--79, 1999.

\bibitem{milne}
J.~S. Milne.
\newblock Canonical models of (mixed) {S}himura varieties and automorphic
  vector bundles.
\newblock In {\em Automorphic forms, {S}himura varieties, and {$L$}-functions,
  {V}ol.\ {I} ({A}nn {A}rbor, {MI}, 1988)}, volume~10 of {\em Perspect. Math.},
  pages 283--414. Academic Press, Boston, MA, 1990.

\bibitem{chicomp}
J.~Newton.
\newblock Geometric level raising for $p$-adic automorphic forms.
\newblock {\em Compos. Math.}, 147(2):335--354, 2011.

\bibitem{cclr}
J.~Newton.
\newblock Level raising and completed cohomology.
\newblock {\em International Mathematics Research Notices},
  2011(11):2565--2576, 2011.

\bibitem{NonComp}
A.~Paulin.
\newblock Geometric level raising and lowering on the eigencurve.
\newblock {\em Manuscripta Math.}, 137(1):129--157, 2012.

\bibitem{Raj}
A.~Rajaei.
\newblock On the levels of mod {$l$} {H}ilbert modular forms.
\newblock {\em J. Reine Angew. Math.}, 537:33--65, 2001.

\bibitem{Ribet100}
K.~A. Ribet.
\newblock On modular representations of {${\rm Gal}(\overline{\bf Q}/{\bf Q})$}
  arising from modular forms.
\newblock {\em Invent. Math.}, 100(2):431--476, 1990.

\bibitem{Sai}
T.~Saito.
\newblock Hilbert modular forms and {$p$}-adic {H}odge theory.
\newblock {\em Compos. Math.}, 145(5):1081--1113, 2009.

\bibitem{STei2}
P.~Schneider and J.~Teitelbaum.
\newblock Banach space representations and {I}wasawa theory.
\newblock {\em Israel J. Math.}, 127:359--380, 2002.

\bibitem{STei}
P.~Schneider and J.~Teitelbaum.
\newblock Algebras of {$p$}-adic distributions and admissible representations.
\newblock {\em Invent. Math.}, 153(1):145--196, 2003.

\bibitem{VarII}
Y.~Varshavsky.
\newblock {$p$}-adic uniformization of unitary {S}himura varieties. {II}.
\newblock {\em J. Differential Geom.}, 49(1):75--113, 1998.

\end{thebibliography}
\end{document}